\documentclass[11pt,reqno]{amsart}

\usepackage[T1]{fontenc}
\usepackage[utf8]{inputenc}
\usepackage{amsmath,amssymb,amsthm,mathtools}
\usepackage[margin=1.15in]{geometry}
\usepackage{enumitem}
\usepackage[expansion=false]{microtype}
\usepackage{xcolor}
\usepackage[colorlinks=true,linkcolor=blue!50!black,citecolor=blue!50!black,
            urlcolor=blue!50!black]{hyperref}

\numberwithin{equation}{section}

\theoremstyle{plain}
\newtheorem{theorem}{Theorem}[section]
\newtheorem{lemma}[theorem]{Lemma}
\newtheorem{proposition}[theorem]{Proposition}
\newtheorem{corollary}[theorem]{Corollary}

\theoremstyle{definition}
\newtheorem{definition}[theorem]{Definition}
\newtheorem{example}[theorem]{Example}
\newtheorem{remark}[theorem]{Remark}

\newtheorem{innerclaim}{Claim}
\newenvironment{claim}{\begin{innerclaim}}{\end{innerclaim}}
\newenvironment{claimproof}
  {\par\smallskip\noindent\textit{Proof of the claim.}\ }
  {\hfill$\square$\par\smallskip}

\newcommand{\R}{\mathbb{R}}
\newcommand{\Lup}{L^{\mathrm{up}}}
\newcommand{\Ldown}{L^{\mathrm{down}}}
\newcommand{\eps}{\varepsilon}
\newcommand{\ones}{\mathbf{1}}
\newcommand{\T}{\mathsf{T}}
\newcommand{\Sset}{\mathcal S}
\newcommand{\Tset}{\mathcal T}
\newcommand{\Aset}{\mathcal A}
\newcommand{\Bset}{\mathcal B}
\newcommand{\Fset}{\mathcal F}
\DeclareMathOperator{\spn}{span}
\DeclareMathOperator{\sgn}{sgn}
\DeclareMathOperator{\adj}{adj}

\DeclareMathOperator{\tr}{tr}

\begin{document}

\title[Spectral bounds and shifted complexes]
{Spectral Bounds and Shifted Complexes: Eigenvalues of the Up-Laplacian via Face Degrees}

\author{Vinayak Gupta}
\address{Vinayak Gupta\\
Department of Mathematics, Indian Institute of Technology Guwahati,
Guwahati 781039, India}
\email{vinayakgupta1729v@gmail.com}

\subjclass[2020]{Primary 05E45; Secondary 15A18, 05C50}
\keywords{Simplicial complex, up-Laplacian, second largest eigenvalue, upper degree,
incidence matrix, eigenvalue interlacing}

\begin{abstract}
Let $K$ be a finite $k$-dimensional simplicial complex with $k\ge1$, and let
$\Lup_{k-1}(K)$ be its $(k-1)$-dimensional up-Laplacian. We resolve the
equality case of a conjecture of Duval and Reiner: for a pure complex, the nonzero spectrum of
$\Lup_{k-1}(K)$ equals ${d^{\mathrm v}(K)}^{\T}$, the conjugate vertex-degree partition, if and
only if the complex is isomorphic to a shifted complex. In fact, we prove more: equality of the
second power sums alone already forces shiftedness, and the gap between them equals twice the
number of failed elementary shifts of facets.

We next characterize equality in the Duval--Reiner bound
$\lambda_1(\Lup_{k-1}(K))\ge d_1(K)+k$, where $d_1(K)$ is the maximum upper degree of a ridge.
We introduce the \emph{clean ridge}, a higher-dimensional analogue of a dominating vertex, and
combine it with a signed Sachs expansion for the associated signed facet-adjacency matrix to
obtain a necessary and sufficient combinatorial criterion. When the signed facet-adjacency
graph is connected and balanced---with balance equivalent to disorientability in dimension
$k$---the criterion reduces to the simple condition that all facets share a common ridge.

Finally, whenever $K$ has at least two facets, we prove the sharp bound
\[
  \lambda_2\bigl(\Lup_{k-1}(K)\bigr)\ge d_2(K)+k-1.
\]
This gives a higher-dimensional analogue of the second-index Brouwer--Haemers bound for graphs.
The natural termwise extension to higher eigenvalue indices fails already at $m=3$, as a small
explicit example shows.
\end{abstract}

\maketitle

\section{Introduction}

For a graph, many results compare the Laplacian eigenvalues with the vertex degrees.
Some give exact formulas for special classes of graphs; others bound individual
eigenvalues in terms of the ordered degree sequence. This paper develops
analogues of both types for simplicial complexes.

For a $k$-dimensional simplicial complex, the $(k-1)$-dimensional up-Laplacian plays
the role of the graph Laplacian; when $k=1$ it is exactly the ordinary Laplacian of a
graph. Two notions of degree appear naturally: the degree of a vertex, which counts
the facets containing it, and the upper degree of a ridge, which counts the facets
containing that ridge. We address three questions. First, when is the up-Laplacian
spectrum determined exactly by the vertex degrees? Second, when does equality hold in
the standard lower bound for the largest up-Laplacian eigenvalue? Third, can the
second largest eigenvalue be controlled by the second largest ridge degree?

\smallskip
\noindent\emph{Spectral equality with the conjugate degree partition.}
Let $K$ be a finite pure $k$-dimensional simplicial complex with $k\ge1$, let
$\Lup_{k-1}(K)$ be its unnormalized $(k-1)$-dimensional up-Laplacian, and write
$\lambda(K)=(\lambda_1(K),\lambda_2(K),\ldots)$ for the nonzero eigenvalues arranged
in nonincreasing order. Let $d^{\mathrm v}(K)=(a_1,\ldots,a_n)$ be the vertex degrees
arranged as $a_1\ge\cdots\ge a_n$, and let ${d^{\mathrm v}(K)}^{\T}$ denote its
conjugate partition, whose $i$th entry is the number of vertices of degree at least
$i$. Duval and Reiner~\cite{DuvalReiner} proved that every shifted complex satisfies
$\lambda(K)={d^{\mathrm v}(K)}^{\T}$, and Conjecture~1.2 of the same paper asserted
that, for every uniform family, $\lambda(K)$ is majorized by ${d^{\mathrm v}(K)}^{\T}$,
with equality if and only if the family is isomorphic to a shifted family. The majorization
half has recently been disproved in every dimension at least two~\cite{Huang,ZhangSongFan};
the equality characterization remained open. We settle it:
\[
  \lambda(K)={d^{\mathrm v}(K)}^{\T}
  \quad\Longleftrightarrow\quad
  K\text{ is isomorphic to a shifted complex}.
\]
For $k=1$ this recovers the classical result that graphs whose Laplacian spectrum
equals the conjugate degree partition are precisely threshold graphs, which are the
shifted graphs~\cite{Merris}. The equality characterization is a corollary of a sharper
identity: after ordering the vertices by nonincreasing degree,
\[
  \sum_{r\ge1}\bigl(d^{\mathrm v,\T}_r(K)\bigr)^2-\sum_r\lambda_r(K)^2
  \;=\;2\,\#\{\text{failed elementary shifts of facets}\},
\]
so equality of just the second power sums forces shiftedness. This defect formula,
proved in Section~\ref{sec:shifted}, is the engine behind the equality
characterization.

\smallskip
\noindent\emph{Equality in the largest-eigenvalue bound.}
Write $D=d_1(K)$ for the maximum upper degree of a ridge. A second result of Duval
and Reiner~\cite{DuvalReiner} gives $\lambda_1(\Lup_{k-1}(K))\ge D+k$, generalizing
$\lambda_1(L(G))\ge\Delta(G)+1$ of Grone and Merris~\cite{GroneMerris}. For connected
graphs, equality holds if and only if the graph has a dominating vertex. We identify
the corresponding local object in higher dimensions: the \emph{clean ridge}. The fan
$\Fset_\rho$ of a maximum-degree ridge $\rho$ is the set of facets containing it, each
obtained by adjoining one vertex to $\rho$. We call $\rho$ clean if every facet
outside its fan is adjacent to either zero or two facets in the fan. For a connected
graph, cleanliness of a maximum-degree vertex is exactly domination.

In dimensions $k\ge2$ cleanliness is necessary but not sufficient: a fan-vector
argument forces every maximum-degree ridge to be clean when
$\lambda_1(\Lup_{k-1}(K))=D+k$, but an explicit three-dimensional example
(Example~\ref{ex:dim3}) exhibits a complex whose unique maximum-degree ridge is clean
and yet $\lambda_1>D+k$. The remaining obstruction is global and is encoded by the signed
facet-adjacency graph. Expanding the relevant principal minors over elementary subgraphs
built from isolated vertices, edges, and signed cycles---a signed Sachs expansion---we obtain
a necessary and sufficient criterion (Theorem~\ref{thm:refined}). The criterion simplifies dramatically when
the signed facet-adjacency graph is connected and balanced, with balance equivalent
to disorientability in dimension $k$ in the sense of~\cite{EidiMukherjee}.
Disorientability is a higher-dimensional analogue of bipartiteness. In this setting,
equality holds if and only if all facets share a common ridge. For graphs this
recovers the characterization of stars.

\smallskip
\noindent\emph{A bound on the second largest eigenvalue.}
For graphs, Li and Pan~\cite{LiPan} proved $\lambda_2(L(G))\ge d_2(G)$, later
subsumed by the Brouwer--Haemers bound~\cite{BrouwerHaemers}. Replacing vertex
degrees by ordered ridge upper degrees, we prove the sharp inequality
\[
  \lambda_2\bigl(\Lup_{k-1}(K)\bigr)\ge d_2(K)+k-1
\]
whenever $K$ has at least two facets. The proof, given in Section~\ref{sec:second},
passes to the down Gram matrix associated with the boundary operator and compares two ridge
stars via a two-dimensional compression argument. For $k=1$ this is the Li--Pan inequality. The
natural termwise extension
\[
  \lambda_m\bigl(\Lup_{k-1}(K)\bigr)\ge d_m(K)-m+k+1
\]
fails already at $m=3$: the cone over a triangle has $d_3(K)=2$ but
$\lambda_3(\Lup_1(K))=1$. Thus the second-eigenvalue bound does not open a full
higher-dimensional Brouwer--Haemers family.

\section{Preliminaries}

Let $V$ be a finite set, whose elements are called \emph{vertices}. A \emph{simplicial
complex} $K$ on $V$ is a collection of subsets of $V$ such that, whenever $F\in K$ and
$\sigma\subseteq F$, one also has $\sigma\in K$. The elements of $K$ are called \emph{faces}.
Thus a face is simply a set of vertices, and the closure condition says that every subset of a
face is again a face.

A face containing $m+1$ vertices has dimension $m$, and $K_j$ denotes the set of all
$j$-dimensional faces. The dimension of $K$ is the largest dimension of one of its faces. In
this paper $K$ is $k$-dimensional, where $k\ge1$, and its $k$-faces will be called
\emph{facets}. A
$(k-1)$-face is called a \emph{ridge}; it is a codimension-one face of each facet that contains
it. Two facets share a ridge when their intersection is a $(k-1)$-face.

For a ridge $\alpha$, its \emph{facet star} is the collection
$\{F\in K_k:\alpha\subset F\}$. Its cardinality
\[
  d(\alpha)=\bigl|\{F\in K_k:\alpha\subset F\}\bigr|
\]
is the \emph{upper degree} of $\alpha$. When $k=1$, the vertices and edges of $K$ are exactly
the vertices and edges of a graph; ridges are vertices, facets are edges, and the upper degree
is the usual vertex degree.

An orientation of a face is an ordering of its vertices, where two orderings are identified if
they differ by an even permutation. Choose one orientation for each face, and let
$C_j(K;\R)$ be the real vector space having the chosen oriented $j$-faces as a basis; reversing
an orientation changes the sign of the corresponding basis vector. The simplicial boundary operator
\[
  \partial_k:C_k(K;\R)\longrightarrow C_{k-1}(K;\R)
\]
is defined on an oriented $k$-face by
\[
  \partial_k[v_0,\dots,v_k]
  =\sum_{i=0}^{k}{(-1)}^{i}[v_0,\dots,\widehat{v_i},\dots,v_k] .
\]
The matrix of $\partial_k$ with respect to these oriented-face bases is the boundary matrix
$B_k=B_k(K)$. Thus
\[
  B_k={\bigl([\sigma:F]\bigr)}_{\sigma\in K_{k-1},\,F\in K_k},
\]
where $[\sigma:F]$ denotes the entry in row $\sigma$ and column $F$. It is equal to $0$ when
$\sigma\not\subset F$, and it is
equal to $1$ or $-1$ when $\sigma$ occurs in the oriented boundary of $F$. Explicitly, if
$F=[v_0,\dots,v_k]$ and $\sigma=F\setminus\{v_i\}$, then $[\sigma:F]={(-1)}^{i}\sgn(\pi)$, where
$\pi$ is the permutation carrying $(v_0,\dots,\widehat{v_i},\dots,v_k)$ to the chosen ordering
of $\sigma$. We record for later use that this description makes sense for \emph{any} pair
$\sigma\subset F$ of finite sets with $|F|=|\sigma|+1$, whether or not they are faces of $K$;
this will matter in Lemma~\ref{lem:sign}, which concerns an abstract simplex that is never a
face of a $k$-dimensional complex.

The $(k-1)$-dimensional up-Laplacian is
\[
  \Lup_{k-1}(K)=B_k B_k^{\T} .
\]
For every real symmetric matrix $A$, we write its eigenvalues, counted with multiplicity, in
nonincreasing order as $\lambda_1(A)\ge\lambda_2(A)\ge\cdots$. The numbers $d_i(K)$ are the upper degrees
of the ridges arranged in nonincreasing order. Since $K_k$ is
nonempty and every $k$-face has $k+1\ge2$ faces of dimension $k-1$, the set $K_{k-1}$ has at
least two elements and $d_2(K)$ is defined.

The (unnormalized) $k$-dimensional down-Laplacian is
\[
  \Ldown_k(K)=B_k^{\T}B_k .
\]
We shall write $M(K)=\Ldown_k(K)$. The matrices
$\Lup_{k-1}(K)=B_k B_k^{\T}$ and $\Ldown_k(K)=B_k^{\T}B_k$ have the same nonzero eigenvalues,
with the same multiplicities; see Lemma~\ref{lem:gram}. Thus their nonzero spectral information
is equivalent, although the multiplicity of the eigenvalue zero may differ. This allows us to
work with whichever of the two matrices is more convenient.

More generally, writing $B_j$ for the $j$th boundary matrix, the adjacent Laplacians
\[
  \Lup_j(K)=B_{j+1}B_{j+1}^{\T}
  \qquad\text{and}\qquad
  \Ldown_{j+1}(K)=B_{j+1}^{\T}B_{j+1}
\]
have the same nonzero spectrum. Moreover, the full $j$-dimensional Laplacian
$L_j(K)=\Lup_j(K)+\Ldown_j(K)$ has, as its nonzero spectrum, the multiset union of the
nonzero spectra of its up- and down-parts. Indeed, $B_jB_{j+1}=0$ implies that these two parts
annihilate each other. Thus, across all dimensions, the nonzero full, up-, and down-Laplacian
spectra determine one another. In this paper we use only the adjacent pair
$\Lup_{k-1}(K)$ and $\Ldown_k(K)$.

If distinct $k$-faces $F$ and $G$ share a $(k-1)$-face $R$, define their incidence product by
\begin{equation}\label{eq:epsdef}
  \eps(F,G)=[R:F][R:G] .
\end{equation}
This definition is independent of the orientation chosen for $R$, because reversing that
orientation changes both factors. It does change sign if the orientation of exactly one of
$F,G$ is reversed. Since two distinct simplices share at most one ridge, the entries of $M(K)$
satisfy
\[
  M_{F,F}=k+1 ,
\]
and, for $F\ne G$,
\[
  M_{F,G}=\eps(F,G)
\]
when $F$ and $G$ share a ridge, while $M_{F,G}=0$ otherwise. Indeed,
$M_{F,G}=\sum_{R\in K_{k-1}}[R:F][R:G]$; for $F=G$ exactly the $k+1$ ridges of $F$ contribute
$1$ each, and for $F\ne G$ at most one ridge can be contained in both.

Throughout, $J$ denotes an all-ones matrix and $\ones$ an all-ones vector, of the size
indicated by the context, and $A[\Aset]$ denotes the principal submatrix of $A$ on the index
set $\Aset$, while $A[\Aset,\Bset]$ denotes the rectangular block with rows $\Aset$ and columns
$\Bset$.

\begin{definition}[Signed facet-adjacency graph]\label{def:sigma}
The \emph{signed facet-adjacency graph} $\Sigma(K)$ has vertex set $K_k$; two distinct facets are
adjacent when they share a ridge, and the sign of that adjacency is the incidence
product~\eqref{eq:epsdef}. Its signed adjacency matrix $S=S(K)$ is the symmetric matrix indexed
by $K_k$ with $S_{F,F}=0$ and, for $F\ne G$, with $S_{F,G}=\eps(F,G)$ if $F$ and $G$ are adjacent
and $S_{F,G}=0$ otherwise. We write $\Gamma(K)$ for the underlying unsigned graph, and for
$\Fset\subseteq K_k$ we write $\Gamma[\Fset]$ for the induced unsigned graph and
\[
  d_{\Fset}=\max_{F\in\Fset}\deg_{\Gamma[\Fset]}(F)
\]
for its maximum degree.
\end{definition}

The description of $M$ above says precisely that
\begin{equation}\label{eq:MS}
  M(K)=B_k^{\T}B_k=(k+1)I+S(K),
\end{equation}
and hence, by Lemma~\ref{lem:gram},
\begin{equation}\label{eq:lamS}
  \lambda_1\bigl(\Lup_{k-1}(K)\bigr)=k+1+\lambda_1\bigl(S(K)\bigr).
\end{equation}
The parameter $d_{\Fset}$ of Definition~\ref{def:sigma} should not be confused with the upper
degrees $d_i(K)$: the former is a maximum degree in a facet-adjacency graph, the latter a
maximum number of facets through a ridge.

\begin{lemma}\label{lem:deletion}
Suppose that $H$ is obtained from $K$ by deleting some $k$-faces while retaining the same row
set for the boundary matrix. Then, for every index $i$,
\[
  \lambda_i\bigl(\Lup_{k-1}(K)\bigr)\ \ge\ \lambda_i\bigl(\Lup_{k-1}(H)\bigr) .
\]
\end{lemma}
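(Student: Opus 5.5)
The plan is to write the up-Laplacian of $K$ as the up-Laplacian of $H$ plus a positive semidefinite correction, and then apply Weyl's monotonicity inequality.

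Since $H$ keeps the same row set, index the rows of both boundary matrices by the same set $K_{k-1}$. Split the columns of $B_k(K)$ into those facets retained in $H$ and those deleted. Reordering columns does not change $B_kB_k^{\T}$. The columns for retained facets form exactly $B_k(H)$, because the entry $[\sigma:F]$ depends only on $\sigma$ and $F$ and the chosen orientations, not on the ambient complex. So we may write
\[
B_k(K)=\bigl(B_k(H)\ \ C\bigr),
\]
where $C$ consists of the columns indexed by the deleted $k$-faces. Then
\[
\Lup_{k-1}(K)=B_k(H)B_k(H)^{\T}+CC^{\T}=\Lup_{k-1}(H)+CC^{\T}.
\]

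The matrix $CC^{\T}$ is positive semidefinite. By Weyl's inequality (equivalently, the Courant--Fischer min-max characterization), adding a positive semidefinite matrix to a real symmetric matrix does not decrease any eigenvalue. For each $i$ we have $x^{\T}(A+P)x\ge x^{\T}Ax$ for every vector $x$, so the max over $i$-dimensional subspaces of the min of the Rayleigh quotient can only increase. This gives $\lambda_i(\Lup_{k-1}(K))\ge\lambda_i(\Lup_{k-1}(H))$ for every index $i$.

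The only point needing care is the bookkeeping. Both matrices must have the same size, which is why the hypothesis that the row set is retained matters. Otherwise some ridges of $K$ might not be ridges of $H$, and indices would shift. With a common index set $K_{k-1}$, the eigenvalue lists have equal length and the comparison is termwise. No step here is a genuine obstacle.
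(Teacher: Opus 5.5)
Your proposal is correct and matches the paper's proof essentially verbatim: you split $B_k(K)=\bigl(B_k(H)\ C\bigr)$, write $\Lup_{k-1}(K)=\Lup_{k-1}(H)+CC^{\T}$, and conclude by Weyl monotonicity via the Courant--Fischer characterization. Your remark that the retained columns are exactly $B_k(H)$, because each incidence entry depends only on the two faces and their chosen orientations, is a correct clarification that the paper leaves implicit.
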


\begin{proof}
After ordering the columns of $B_k(K)$, write
\[
  B_k(K)=\begin{pmatrix}B_k(H) & C\end{pmatrix},
\]
where the columns of $C$ correspond to the deleted $k$-faces. Consequently,
\[
  \Lup_{k-1}(K)=B_k(K){B_k(K)}^{\T}
  =B_k(H){B_k(H)}^{\T}+CC^{\T}
  =\Lup_{k-1}(H)+CC^{\T}.
\]
The matrix $CC^{\T}$ is positive semidefinite. Weyl monotonicity therefore gives the required
inequalities: if $A\succeq B$ then $z^{\T}Az\ge z^{\T}Bz$ for every $z$, so the maximum over
$i$-dimensional subspaces of the minimal Rayleigh quotient is at least as large for $A$ as for
$B$.
\end{proof}

The following standard fact is an immediate consequence of the singular-value decomposition;
we record it for later use.

\begin{lemma}\label{lem:gram}
Let $B$ be a real matrix.
\begin{enumerate}[label=\textup{(\alph*)},leftmargin=2.6em]
\item The matrices $BB^{\T}$ and $B^{\T}B$ have the same nonzero eigenvalues,
with the same multiplicities.
\item\label{it:gram-b} If $\lambda_2(B^{\T}B)>0$, then $\lambda_2(BB^{\T})=\lambda_2(B^{\T}B)$.
\end{enumerate}
\end{lemma}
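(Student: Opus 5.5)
The plan is to use the singular value decomposition of $B$ to match the nonzero parts of the two spectra, and then obtain part \ref{it:gram-b} by pure counting. Let $B$ be $p\times q$ of rank $r$, and write $B=U\Sigma W^{\T}$ with $U\in\R^{p\times p}$ and $W\in\R^{q\times q}$ orthogonal. Here $\Sigma$ is $p\times q$ with diagonal entries $s_1\ge\cdots\ge s_r>0$ and zeros elsewhere. Then
\[
  BB^{\T}=U(\Sigma\Sigma^{\T})U^{\T},\qquad B^{\T}B=W(\Sigma^{\T}\Sigma)W^{\T}.
\]
The matrices $\Sigma\Sigma^{\T}$ and $\Sigma^{\T}\Sigma$ are diagonal, of sizes $p$ and $q$. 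Their diagonal entries are $s_1^2,\dots,s_r^2$ followed by $p-r$ zeros, respectively $q-r$ zeros. Orthogonal similarity preserves spectra with multiplicity, so both matrices have exactly the nonzero eigenvalues $s_1^2,\dots,s_r^2$. This proves part (a).

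If one prefers to avoid the SVD, part (a) also follows from a direct map between eigenspaces. For $\mu\ne0$, the map $x\mapsto Bx$ sends the $\mu$-eigenspace of $B^{\T}B$ into the $\mu$-eigenspace of $BB^{\T}$. It is injective, because $Bx=0$ forces $\mu x=B^{\T}Bx=0$. By symmetry, $y\mapsto B^{\T}y$ is injective in the other direction, so the dimensions agree.

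For part \ref{it:gram-b}, both matrices are positive semidefinite, so all eigenvalues are nonnegative. When eigenvalues are listed in nonincreasing order, the nonzero ones come first. Suppose $\lambda_2(B^{\T}B)>0$. Then $\lambda_1(B^{\T}B)$ and $\lambda_2(B^{\T}B)$ are the two largest nonzero eigenvalues, namely $s_1^2$ and $s_2^2$. By part (a), $BB^{\T}$ has the same nonzero multiset, so its first two ordered eigenvalues are also $s_1^2$ and $s_2^2$. Hence $\lambda_2(BB^{\T})=\lambda_2(B^{\T}B)$.

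The only point needing care is the role of the hypothesis in part \ref{it:gram-b}. If $\lambda_2(B^{\T}B)=0$, then zeros can occupy position $2$ in one list but not the other. This happens, for example, when $r=1$: then $\lambda_2$ of both matrices is $0$ and equality still holds. More generally, the hypothesis guarantees that index $2$ lies inside the common nonzero block of the two spectra, which is what the argument uses. Otherwise the argument is routine.
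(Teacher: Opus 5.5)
Your argument is correct and follows the route the paper intends: the paper gives no written proof of this lemma and simply calls it an immediate consequence of the singular-value decomposition, which is exactly what you carry out. One small correction to your closing remark: both matrices are positive semidefinite with the same nonzero spectrum, so $\lambda_2(BB^{\T})=\lambda_2(B^{\T}B)$ holds whenever both matrices have at least two rows, and a zero can never sit in position $2$ of one list but not the other; the hypothesis $\lambda_2(B^{\T}B)>0$ really only forces $B$ to have rank at least $2$, so that $BB^{\T}$ is at least $2\times2$ and $\lambda_2(BB^{\T})$ is defined.
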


\begin{lemma}\label{lem:compress}
Let $A$ be a real symmetric $n\times n$ matrix and let $X\in\R^{n\times r}$ satisfy
$X^{\T}X=I_r$. Then
\[
  \lambda_i(A)\ \ge\ \lambda_i\bigl(X^{\T}AX\bigr),\qquad 1\le i\le r .
\]
In particular:
\begin{enumerate}[label=\textup{(\alph*)},leftmargin=2.6em]
\item\label{it:comp-a} if there is a two-dimensional subspace $U$ with
$z^{\T}Az/z^{\T}z\ge\alpha$ for every nonzero $z\in U$, then $\lambda_2(A)\ge\alpha$;
\item\label{it:comp-b} if $\Aset$ is a set of indices, then
$\lambda_i(A)\ge\lambda_i(A[\Aset])$ for $1\le i\le|\Aset|$.
\end{enumerate}
\end{lemma}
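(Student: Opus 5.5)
The statement to prove is Lemma~\ref{lem:compress}: the eigenvalues of a symmetric $A$ dominate, index by index, those of any compression $X^{\T}AX$ with $X^{\T}X=I_r$. I will deduce this from the Courant--Fischer max--min characterization,
\[
  \lambda_i(A)=\max_{\dim W=i}\ \min_{0\ne z\in W}\frac{z^{\T}Az}{z^{\T}z}.
\]
This is the same tool already used in the proof of Lemma~\ref{lem:deletion}.

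For the main inequality, fix $1\le i\le r$ and set $C=X^{\T}AX$. Choose an $i$-dimensional subspace $W'\subseteq\R^r$ that attains the maximum for $\lambda_i(C)$; for instance, the span of the top $i$ eigenvectors of $C$. Set $W=XW'\subseteq\R^n$. Because $X^{\T}X=I_r$, the map $y\mapsto Xy$ is an isometry, so it is injective and $\dim W=i$. For $z=Xy$ with $y\ne0$ we have $z^{\T}z=y^{\T}X^{\T}Xy=y^{\T}y$ and $z^{\T}Az=y^{\T}Cy$, so the Rayleigh quotients of $A$ on $W$ coincide with those of $C$ on $W'$. Hence
\[
  \lambda_i(A)\ \ge\ \min_{0\ne z\in W}\frac{z^{\T}Az}{z^{\T}z}\ =\ \min_{0\ne y\in W'}\frac{y^{\T}Cy}{y^{\T}y}\ =\ \lambda_i(C).
\]
The only point needing care is the isometry property of $X$, which guarantees both that the dimension is preserved and that the denominators agree.

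For \ref{it:comp-a}, the subspace $U$ itself is admissible in the max--min formula with $i=2$. Its minimal Rayleigh quotient is at least $\alpha$, so $\lambda_2(A)\ge\alpha$ directly. Equivalently, one can take $X$ to be an orthonormal basis of $U$ and note that every eigenvalue of the $2\times2$ matrix $X^{\T}AX$ is a Rayleigh quotient of $A$ on $U$, hence is at least $\alpha$.

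For \ref{it:comp-b}, take $X$ to be the $n\times|\Aset|$ matrix whose columns are the standard basis vectors $e_a$ for $a\in\Aset$. Then $X^{\T}X=I_{|\Aset|}$ and $X^{\T}AX=A[\Aset]$, so the main inequality gives the claim. This is Cauchy interlacing in its one-sided form.

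I expect no step to be a genuine obstacle. The only place to be careful is recording the isometry property of $X$ before transferring the subspace $W'$ into $\R^n$.
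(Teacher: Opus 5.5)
Your proof is correct and matches the paper's argument: you push an optimal $i$-dimensional subspace for $X^{\T}AX$ into $\R^n$ via the isometry $X$, match the Rayleigh quotients, and apply Courant--Fischer to $A$, then obtain (a) and (b) from the same choices of $X$. Plugging $U$ directly into the max--min formula for (a), as you also note, is an equally valid shortcut.
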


\begin{proof}
Fix $i\le r$. By the Courant--Fischer theorem applied to $X^{\T}AX$, there is an
$i$-dimensional subspace $W\subseteq\R^{r}$ with
\[
  \min_{0\ne u\in W}\frac{u^{\T}(X^{\T}AX)u}{u^{\T}u}=\lambda_i\bigl(X^{\T}AX\bigr).
\]
Since $X^{\T}X=I_r$, the map $u\mapsto Xu$ is injective, so $XW$ is an $i$-dimensional subspace
of $\R^{n}$; and for $z=Xu$ we have $z^{\T}Az=u^{\T}(X^{\T}AX)u$ and
$z^{\T}z=u^{\T}X^{\T}Xu=u^{\T}u$. Hence the minimal Rayleigh quotient of $A$ on $XW$ equals
$\lambda_i(X^{\T}AX)$, and Courant--Fischer applied to $A$,
\[
  \lambda_i(A)=\max_{\dim W'=i}\ \min_{0\ne z\in W'}\frac{z^{\T}Az}{z^{\T}z},
\]
gives $\lambda_i(A)\ge\lambda_i(X^{\T}AX)$.

For~\ref{it:comp-a}, take $r=2$ and let the columns of $X$ be an orthonormal basis of $U$. Then
$\lambda_2(X^{\T}AX)$ is the smaller of the two eigenvalues of the $2\times2$ matrix
$X^{\T}AX$, which by the displayed identity equals the minimum of the Rayleigh quotient of $A$
over $U$, hence is at least $\alpha$. For~\ref{it:comp-b}, take the columns of $X$ to be the
standard basis vectors indexed by $\Aset$, so that $X^{\T}AX=A[\Aset]$; this is the Cauchy
interlacing inequality, cf.~\cite{HornJohnson}.
\end{proof}

The next lemma shows that facet orientations can be chosen so that all incidence products
within a facet star are positive, without changing the spectrum of the down-Laplacian. We shall
use its simultaneous version for two facet stars.

\begin{lemma}\label{lem:switch}
Let $\alpha$ be a ridge and let $\Aset$ be a nonempty collection of facets containing
$\alpha$. The facets can be reoriented so that
\[
  \eps(F,F')=1\qquad\text{for all distinct }F,F'\in\Aset.
\]
Equivalently, there is a diagonal matrix $E$, indexed by the facets and having diagonal entries
in $\{1,-1\}$, such that the down-Laplacian matrix after this reorientation is $EME$ and
\[
  (EME)[\Aset]=kI+J .
\]
Here $E_{F,F}=-1$ means that the orientation of $F$ is reversed, whereas $E_{F,F}=1$ means that
it is left unchanged. Moreover, $EME$ has the same eigenvalues as $M$.

Moreover, let $\beta$ be another ridge and let $\Bset$ be a collection of facets containing
$\beta$. If $|\Aset\cap\Bset|\le1$, then the orientations can be chosen so that both
$(EME)[\Aset]=kI+J$ and $(EME)[\Bset]=kI+J$ hold simultaneously.
\end{lemma}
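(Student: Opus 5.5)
The plan is to reorient facets one at a time: for each $G\in\Aset$ we decide whether to reverse it, and then check that $E$ acts as a signature similarity. Fix an orientation of $\alpha$. For each $F\in\Aset$ we have $[\alpha:F]\in\{1,-1\}$. Set $E_{F,F}=[\alpha:F]$ for $F\in\Aset$ and $E_{F,F}=1$ otherwise. Reversing the orientation of $F$ multiplies column $F$ of $B_k$ by $-1$, so the new boundary matrix is $B_kE$. Hence the new down-Laplacian is $(B_kE)^{\T}(B_kE)=EME$. After the change, $[\alpha:F]=1$ for every $F\in\Aset$. Since two distinct facets of $\Aset$ share exactly the ridge $\alpha$, we get $\eps(F,F')=[\alpha:F][\alpha:F']=1$. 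Combined with the diagonal entries $k+1$, this gives $(EME)[\Aset]=(k+1)I+(J-I)=kI+J$. Because $E=E^{-1}=E^{\T}$ is orthogonal, $EME$ is similar to $M$, so the two have the same eigenvalues.

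For the simultaneous statement, fix orientations of both $\alpha$ and $\beta$. We need $E_{F,F}=[\alpha:F]$ for $F\in\Aset$ and $E_{F,F}=[\beta:F]$ for $F\in\Bset$. These requirements conflict only on $\Aset\cap\Bset$, which has at most one element.

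The one point needing care is that common facet $F_0$. Here we use the freedom to reorient $\beta$ itself. Recall that $\eps$ does not depend on the orientation of the shared ridge, and that replacing $\beta$ by its reverse negates every $[\beta:F]$. So we first choose the orientation of $\beta$ so that $[\beta:F_0]=[\alpha:F_0]$. After that, the prescription $E_{F,F}=[\alpha:F]$ on $\Aset$, $E_{F,F}=[\beta:F]$ on $\Bset$, and $1$ elsewhere is consistent.

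With this $E$, all new incidences $[\alpha:F]$ for $F\in\Aset$ and $[\beta:F]$ for $F\in\Bset$ equal $1$. The computation from the first part then gives $kI+J$ on both principal blocks. The only structural fact used is that two distinct facets share at most one ridge. This guarantees that the $(F,F')$ entry within $\Aset$ (respectively $\Bset$) comes from $\alpha$ (respectively $\beta$) alone, even when $\alpha\neq\beta$. I expect no real obstacle beyond handling this common facet. If $\alpha=\beta$, then $\Aset\cup\Bset$ all contain $\alpha$, and the first part applied to $\Aset\cup\Bset$ already suffices.
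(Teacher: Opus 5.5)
Your proof is correct and is essentially the paper's argument. The paper sets $E_{F,F}=[\alpha:F][\alpha:F_*]$ for a reference facet $F_*$ and, when $\Aset\cap\Bset=\{F_0\}$, uses $F_0$ as the common reference for both stars; this is equivalent to your choice of orienting $\beta$ so that $[\beta:F_0]=[\alpha:F_0]$, since ridge reorientations only multiply rows of $B_k$ by $\pm1$ and leave $M$ unchanged.
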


\begin{proof}
Any two distinct facets $F,F'\in\Aset$ intersect precisely in $\alpha$. Indeed, both contain
$\alpha$, and since they are distinct sets of $k+1$ vertices, their intersection has at most
$k=|\alpha|$ vertices. Thus they share the ridge $\alpha$.

Choose a reference facet $F_*\in\Aset$. For every $F\in\Aset$, set
\[
  E_{F,F}=[\alpha:F][\alpha:F_*] ,
\]
and set $E_{F,F}=1$ for every facet outside $\Aset$. Let $B_k'=B_k E$ be the boundary matrix
after the corresponding reorientation. For each $F\in\Aset$, its new incidence with $\alpha$
is
\[
  [\alpha:F]'=[\alpha:F]E_{F,F}
  ={[\alpha:F]}^{2}[\alpha:F_*]=[\alpha:F_*].
\]
Hence all facets in $\Aset$ have the same incidence sign with $\alpha$, and therefore
$\eps'(F,F')=[\alpha:F]'[\alpha:F']'=1$ for distinct $F,F'\in\Aset$. The diagonal entries of
the down-Laplacian remain $k+1$, so its principal block on $\Aset$ is $kI+J$.

Moreover,
\[
  {(B_k')}^{\T}B_k'={(B_k E)}^{\T}(B_k E)=EME .
\]
Thus $EME={(B_k E)}^{\T}(B_k E)$ is the down-Laplacian matrix corresponding to the new choice of
facet orientations. Since $E^{-1}=E=E^{\T}$, it is orthogonally similar to $M$, so no
eigenvalue changes. Moreover, its entries are described by the incidence products recorded
before Lemma~\ref{lem:deletion}, now computed with respect to the new orientations.

Finally, suppose $\Aset$ and $\Bset$ are given with $|\Aset\cap\Bset|\le1$. If
$\Aset\cap\Bset=\varnothing$, define the diagonal entries of $E$ separately on the two index
sets by the recipe above, with reference faces $F_*\in\Aset$ and $G_*\in\Bset$; there is no
conflict. If $\Aset\cap\Bset=\{F_0\}$, use $F_0$ as the reference face for both stars. The two
definitions assign to $F_0$ the values ${[\alpha:F_0]}^{2}=1$ and ${[\beta:F_0]}^{2}=1$ respectively,
hence agree, and they are the only possible source of conflict. In both situations the
computation above applies verbatim to each of $\Aset$ and $\Bset$.
\end{proof}

\begin{lemma}\label{lem:triangle}
Let $A,B,C$ be three distinct $k$-faces that pairwise share ridges. Then either they all contain
one common ridge, or they are three facets of the same abstract $(k+1)$-simplex.
\end{lemma}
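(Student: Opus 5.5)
The argument is purely combinatorial: write each shared ridge as a pairwise intersection, use cardinality to show all three facets live inside a set of size at most $k+2$, and then split according to how many vertices they have in total. Set $R_{AB}=A\cap B$, $R_{BC}=B\cap C$ and $R_{AC}=A\cap C$. Since the facets are distinct $(k+1)$-sets that share ridges, each of these intersections has exactly $k$ vertices. Consequently $A\cup B$ has exactly $k+2$ vertices, with $A=R_{AB}\cup\{a\}$ and $B=R_{AB}\cup\{b\}$ for distinct vertices $a\ne b$ outside $R_{AB}$.

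The key step is to locate $C$. I will show that $C\subseteq A\cup B$ or $C\supseteq R_{AB}$. Since $|C\cap A|=k$ and $|C\cap B|=k$, we get
\[
  |C\cap(A\cup B)| \;\ge\; |C\cap A|+|C\cap B|-|C\cap A\cap B| \;=\; 2k-|C\cap R_{AB}|.
\]
Also $C\cap R_{AB}\subseteq C\cap A$, so $|C\cap R_{AB}|\le k$. Two cases follow.

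\emph{Case $|C\cap R_{AB}|=k$.} Then $R_{AB}\subset C$, so $R_{AB}$ is a common ridge of all three facets.

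\emph{Case $|C\cap R_{AB}|\le k-1$.} Then $|C\cap(A\cup B)|\ge k+1=|C|$, so $C\subseteq A\cup B$. Now $A$, $B$, $C$ are three distinct $(k+1)$-subsets of the $(k+2)$-set $A\cup B$, so they are facets (codimension-one faces) of the abstract $(k+1)$-simplex $A\cup B$. This simplex need not be a face of $K$, which is exactly why the statement says "abstract".

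There is no real obstacle here. The only delicate point is the inclusion--exclusion bound, which must use that $C\cap A$ and $C\cap B$ are both $k$-sets inside $C$. As a consistency check, the two alternatives can overlap only in a degenerate way. If $C$ contains $R_{AB}$ and lies in $A\cup B$, then $C=R_{AB}\cup\{a\}$ or $C=R_{AB}\cup\{b\}$, which contradicts distinctness. So when $k\ge1$ the alternatives are exclusive, though exclusivity is not required by the statement.
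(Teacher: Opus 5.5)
Your proof is correct and uses the same case split as the paper, namely whether $A\cap B\subset C$. In the second case you get $C\subseteq A\cup B$ from an inclusion--exclusion count, whereas the paper writes $C=(A\setminus\{r\})\cup\{x\}$ with $r\in A\cap B$ and $x\notin A$, and then uses $|C\cap B|=k$ to force $x=b$. Both routes show that $A,B,C$ are three $(k+1)$-subsets of the $(k+2)$-set $A\cup B$; your counting version is slightly more economical, while the paper's names $C$ explicitly as $(A\cup B)\setminus\{r\}$.
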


\begin{proof}
Write $A=R\cup\{a\}$ and $B=R\cup\{b\}$, where $R=A\cap B$ is their common ridge. If
$R\subset C$, the first alternative holds. Otherwise, since $C$ shares a ridge with $A$, there
are $r\in R$ and a vertex $x\notin A$ such that
\[
  C=(A\setminus\{r\})\cup\{x\}=(R\setminus\{r\})\cup\{a,x\}.
\]
The intersection of this set with $B=R\cup\{b\}$ already contains the $k-1$ vertices of
$R\setminus\{r\}$. For it to be a ridge, one more vertex must lie in $B$, and therefore
$x=b$. Hence $A,B,C$ are obtained from the $(k+2)$-element set $R\cup\{a,b\}$ by deleting
$b,a,r$, respectively, so they are facets of one abstract $(k+1)$-simplex.
\end{proof}

\begin{lemma}\label{lem:sign}
Let $A,B,C$ be three distinct $k$-faces which are facets of the same abstract
$(k+1)$-simplex. Then, for arbitrary orientations of $A,B,C$,
\[
  \eps(A,B)\,\eps(B,C)\,\eps(C,A)=-1 .
\]
\end{lemma}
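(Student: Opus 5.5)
Write the abstract $(k+1)$-simplex as $\Delta=R\cup\{a,b,c\}$ with $|R|=k-1$, so that $A=\Delta\setminus\{a\}$, $B=\Delta\setminus\{b\}$ and $C=\Delta\setminus\{c\}$. Lemma~\ref{lem:triangle} already gives this description. The three pairwise common ridges are then $A\cap B=R\cup\{c\}$, $B\cap C=R\cup\{a\}$ and $C\cap A=R\cup\{b\}$.

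The first step is to reduce to a single convenient choice of orientations. Reversing the orientation of one of $A,B,C$ changes the sign of exactly two of the three factors $\eps(A,B),\eps(B,C),\eps(C,A)$, namely the two involving that facet. So the product is independent of the orientations of $A,B,C$. It is also independent of the orientations of the three ridges, since each $\eps$ is. We may therefore choose all orientations to be induced from one fixed orientation of $\Delta$.

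The second step uses the identity $\partial_k\partial_{k+1}=0$ applied to $\Delta$. This is legitimate even though $\Delta$ is not a face, because the incidence numbers are defined for abstract sets, as remarked in the Preliminaries. Orient each facet $\Delta\setminus\{v\}$ as it occurs in $\partial\Delta$; that is, set $[\Delta\setminus\{v\}:\Delta]=1$ for $v\in\{a,b,c\}$. Fix a ridge, say $\rho=R\cup\{c\}$. The $(k-1)$-faces of $\Delta$ containing $\rho$ are $A$ and $B$ only, so the coefficient of $\rho$ in $\partial\partial\Delta$ is $[\rho:A]+[\rho:B]=0$. Hence $\eps(A,B)=[\rho:A][\rho:B]=-1$. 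The same argument gives $\eps(B,C)=\eps(C,A)=-1$, and the product is $(-1)^3=-1$.

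The only delicate point is verifying $\partial\partial=0$ coefficientwise for an abstract simplex, which is standard. Alternatively, one can compute directly by ordering $\Delta$ as $[a,b,c,r_1,\dots,r_{k-1}]$ and tracking the sign $(-1)^i$ factors. Either way, I expect this to be the main, though routine, obstacle.
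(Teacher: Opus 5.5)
Your proof is correct, but it organizes the sign computation differently from the paper. The paper fixes the displayed-order orientation $[w_0,\dots,\widehat{w_i},\dots,w_{k+1}]$ on each facet $\Delta_i$. It computes the two incidences by counting positions, which gives $\eps(\Delta_i,\Delta_j)=(-1)^{i+j-1}$. It then multiplies over $i<j<\ell$ to get $(-1)^{2i+2j+2\ell-3}=-1$, and only afterwards notes invariance under reorientation.

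You instead use the invariance first, to pass to the boundary-induced orientation $[\Delta\setminus\{v\}:\Delta]=1$. You then read off from the vanishing of each ridge coefficient in $\partial_k\partial_{k+1}\Delta$ that every pairwise product is $-1$. This is legitimately computed in the full simplex on $\Delta$, where the incidence numbers are the same intrinsic quantities. The two computations are related by switching $\Delta_i$ by $(-1)^i=[\Delta_i:\Delta]$, which turns the paper's $(-1)^{i+j-1}$ into your constant $-1$.

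Your version buys a cleaner structural statement: in the induced orientation, all $k+2$ facets of $\Delta$ pairwise have incidence product $-1$, so they form a negative clique. The cost is importing $\partial\partial=0$, whose standard proof is exactly the position bookkeeping the paper carries out; the paper's argument is self-contained and also records the explicit formula.

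Two small slips. The faces of $\Delta$ containing $\rho$ that you mean are its $k$-faces, not its $(k-1)$-faces. Also, Lemma~\ref{lem:triangle} is not needed for the description $A=\Delta\setminus\{a\}$, $B=\Delta\setminus\{b\}$, $C=\Delta\setminus\{c\}$, which follows directly from the hypothesis.
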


\begin{proof}
Write the vertices of the abstract $(k+1)$-simplex in the order
\[
  [w_0,w_1,\dots,w_{k+1}] .
\]
For each $i$, let
\[
  \Delta_i=[w_0,\dots,\widehat{w_i},\dots,w_{k+1}]
\]
be the facet obtained by deleting $w_i$, oriented by the displayed order of its remaining
vertices. Any two of these facets meet in $k$ vertices, since
$\Delta_i\cap\Delta_j=W\setminus\{w_i,w_j\}$; hence all three incidence products in the
statement are defined.

Take $i<j$, and orient the common ridge $\sigma_{ij}$ by the remaining vertex order after
deleting $w_i$ and $w_j$. In $\Delta_i$ the vertex $w_j$ occupies position $j-1$, because the
deletion of $w_i$ has shifted it down by one place, and deleting it leaves the vertices of
$\sigma_{ij}$ in the chosen order; hence
\[
  [\sigma_{ij}:\Delta_i]={(-1)}^{j-1}.
\]
In $\Delta_j$ the vertex $w_i$ occupies position $i$, since the deletion of $w_j$ does not
affect the places of the vertices preceding it, and hence
\[
  [\sigma_{ij}:\Delta_j]={(-1)}^{i}.
\]
It follows that
\[
  \eps(\Delta_i,\Delta_j)={(-1)}^{i+j-1}.
\]
Now take $i<j<\ell$. Multiplying the three pairwise signs, and using the symmetry
$\eps(\Delta_\ell,\Delta_i)=\eps(\Delta_i,\Delta_\ell)$, gives
\[
  \eps(\Delta_i,\Delta_j)\,\eps(\Delta_j,\Delta_\ell)\,\eps(\Delta_\ell,\Delta_i)
  ={(-1)}^{(i+j-1)+(j+\ell-1)+(i+\ell-1)}={(-1)}^{2i+2j+2\ell-3}=-1 .
\]
This proves the identity for the displayed orientations. Reversing the orientation of one facet
changes the sign of exactly the two factors involving that facet, so the triple product does
not change. Since every choice of orientations of $A,B,C$ is obtained from the displayed one by
reversing a subset of them, the identity holds for arbitrary orientations.
\end{proof}

We record the following elementary characterization, which will be used repeatedly in the proof.

\begin{lemma}\label{lem:adj}
Let $\sigma\ne\tau$ be $(k-1)$-faces, put
\[
  \rho=\sigma\cap\tau,\qquad q=|\sigma\setminus\tau|=|\tau\setminus\sigma|\ge1,
  \qquad |\rho|=k-q ,
\]
and let $F=\sigma\cup\{u\}$ and $G=\tau\cup\{v\}$ be $k$-faces with $F\ne G$, where
$u\notin\sigma$ and $v\notin\tau$. Then
\[
  |F\cap G|=
  \begin{cases}
    k-q+1, & u=v,\\[2pt]
    k-q+|\sigma\cap\{v\}|+|\tau\cap\{u\}|, & u\ne v.
  \end{cases}
\]
Consequently $|F\cap G|\le k-q+2$, and:
\begin{enumerate}[label=\textup{(\alph*)},leftmargin=2.6em]
\item\label{it:adj-a} if $q\ge3$, then $F$ and $G$ never share a ridge;
\item\label{it:adj-b} if $q=1$, write $\sigma=\rho\cup\{a\}$ and $\tau=\rho\cup\{b\}$. If in
addition $u\ne b$ and $v\ne a$, then $F$ and $G$ share a ridge if and only if $u=v$, and in
that case $F\cap G=\rho\cup\{u\}$ and $u\notin\rho\cup\{a,b\}$;
\item\label{it:adj-c} if $q=2$, write $\sigma=\rho\cup\{a_1,a_2\}$ and
$\tau=\rho\cup\{b_1,b_2\}$. Then $F$ and $G$ share a ridge if and only if
\[
  F=F_i:=\sigma\cup\{b_i\}\quad\text{and}\quad G=G_j:=\tau\cup\{a_j\}
\]
for some $i,j\in\{1,2\}$. Moreover,
$W=\sigma\cup\tau=\rho\cup\{a_1,a_2,b_1,b_2\}$ is a $(k+2)$-element vertex set, not
necessarily a face of $K$, and $F_1,F_2,G_1,G_2$ are four pairwise distinct
$(k+1)$-element subsets of $W$. Furthermore,
$F_i\cap G_j=\rho\cup\{a_j,b_i\}$ is a ridge for \emph{every} pair $(i,j)$.
\end{enumerate}
\end{lemma}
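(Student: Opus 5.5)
The plan is a direct set-theoretic count. Since $|\sigma|=|\tau|=k$ and $F=\sigma\cup\{u\}$, $G=\tau\cup\{v\}$, we expand
\[
  F\cap G=(\sigma\cap\tau)\cup(\sigma\cap\{v\})\cup(\{u\}\cap\tau)\cup(\{u\}\cap\{v\}).
\]
If $u=v$, then $u\notin\sigma$ and $v\notin\tau$ give $F\cap G=\rho\cup\{u\}$ with $u\notin\rho$, so $|F\cap G|=k-q+1$. If $u\neq v$, the last term vanishes. Moreover $v\in\sigma$ forces $v\notin\tau$, hence $v\in\sigma\setminus\tau$, disjoint from $\rho$; symmetrically $u\in\tau$ forces $u\in\tau\setminus\sigma$. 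Also $v\neq u$ and they lie in disjoint sets. So the union is disjoint and has cardinality $k-q+|\sigma\cap\{v\}|+|\tau\cap\{u\}|$. The bound $|F\cap G|\le k-q+2$ is immediate.

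Since $F\ne G$, sharing a ridge means $|F\cap G|=k$. For (a), $q\ge3$ gives $|F\cap G|\le k-1$, so no ridge is shared. For (b), with $q=1$ and $u\ne v$, we have $\sigma\cap\{v\}\neq\varnothing$ iff $v\in\sigma\setminus\tau=\{a\}$, which is excluded; likewise $u\notin\{b\}$. Hence $|F\cap G|=k-1$ unless $u=v$. In the case $u=v$, $|F\cap G|=k$ and $F\cap G=\rho\cup\{u\}$. Here $u\notin\sigma\supseteq\rho\cup\{a\}$, and $u=v\notin\tau\ni b$.

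For (c), with $q=2$, the case $u=v$ gives only $k-1$. So we need $u\ne v$ with both indicator terms equal to $1$, that is, $v\in\sigma\setminus\tau=\{a_1,a_2\}$ and $u\in\{b_1,b_2\}$. This is exactly $F=F_i$, $G=G_j$. Conversely, for these choices the intersection formula gives $\rho\cup\{a_j,b_i\}$ of size $k$, so it is a ridge. The facts that $F_i,G_j$ are $k$-faces and $F\ne G$ come from the hypotheses; the claim ``for every pair'' is purely about sets, and distinctness is checked below. The set $W$ has $|\rho|+4=k+2$ elements. Each $F_i$ omits exactly $b_{3-i}$ from $W$ and each $G_j$ omits $a_{3-j}$. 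Since these omitted vertices are distinct, the four subsets are pairwise distinct $(k+1)$-subsets of $W$.

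There is no real obstacle. The only point needing care is the disjointness bookkeeping in the $u\ne v$ case, which ensures the cardinality formula counts each vertex exactly once.
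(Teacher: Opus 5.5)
Your proof is correct and follows essentially the same route as the paper: the same four-piece decomposition of $F\cap G$ with the same disjointness bookkeeping, followed by the same case analysis for $q\ge3$, $q=1$ and $q=2$. The only difference is cosmetic. You obtain pairwise distinctness of $F_1,F_2,G_1,G_2$ from the four distinct vertices they omit from $W$, whereas the paper argues that $\sigma\subseteq F_i$ but $\sigma\not\subseteq G_j$; both arguments are immediate.
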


\begin{proof}
Decompose
\[
  F\cap G=\rho\ \cup\ (\sigma\cap\{v\})\ \cup\ (\{u\}\cap\tau)\ \cup\ (\{u\}\cap\{v\}) .
\]
These four sets are pairwise disjoint. Indeed $\sigma\cap\{v\}\subseteq\sigma\setminus\rho$
because $v\notin\tau\supseteq\rho$, and likewise $\{u\}\cap\tau\subseteq\tau\setminus\rho$;
these two sets are disjoint from each other and from $\rho$. If $u=v$, then $u\notin\sigma$ and
$v\notin\tau$ force the second and third sets to be empty and the fourth to be $\{u\}$, whence
$|F\cap G|=|\rho|+1=k-q+1$. If $u\ne v$, the fourth set is empty and the second and third
contribute the two indicators. This gives the asserted expression for $|F\cap G|$, and hence the bound
$|F\cap G|\le k-q+2$ follows.

For~\ref{it:adj-a}, if $q\ge3$ then $|F\cap G|\le k-1<k$, so $F$ and $G$ never share a ridge.

For~\ref{it:adj-b}, let $q=1$, so $|\rho|=k-1$, and assume $u\ne b$ and $v\ne a$.
Since $\sigma=\rho\cup\{a\}$ and $\tau=\rho\cup\{b\}$, the conditions
$v\notin\tau$, $v\ne a$, $u\notin\sigma$, and $u\ne b$ imply that
$v\notin\sigma$ and $u\notin\tau$. Therefore, using the decomposition above,
\[
  F\cap G=
  \begin{cases}
    \rho\cup\{u\}, & u=v,\\
    \rho, & u\ne v.
  \end{cases}
\]
Hence $|F\cap G|=k$ if $u=v$, and $|F\cap G|=k-1$ otherwise.

For~\ref{it:adj-c}, let $q=2$, so $|\rho|=k-2$. We use the decomposition of $F\cap G$ given above.
If $u=v$, then $F\cap G=\rho\cup\{u\}$, which has $k-1$ elements, so $F$ and $G$ do not
share a ridge. Suppose now that $u\ne v$. Outside $\rho$, the decomposition shows that the only
possible elements of $F\cap G$ are $v$, provided that $v\in\sigma$, and $u$, provided that
$u\in\tau$. Thus $|F\cap G|=k$ holds precisely when $v\in\sigma$ and $u\in\tau$. Together with
$u\notin\sigma$ and $v\notin\tau$, this gives
$u\in\tau\setminus\sigma=\{b_1,b_2\}$ and
$v\in\sigma\setminus\tau=\{a_1,a_2\}$, that is, $F=F_i$ and $G=G_j$. Conversely, for any
$i,j\in\{1,2\}$,
\[
  F_i\cap G_j=\bigl(\rho\cup\{a_1,a_2,b_i\}\bigr)\cap\bigl(\rho\cup\{b_1,b_2,a_j\}\bigr)
  =\rho\cup\{a_j,b_i\},
\]
which has $(k-2)+2=k$ elements, so $F_i$ and $G_j$ do share a ridge. The four faces are
pairwise distinct: $F_1\ne F_2$ and $G_1\ne G_2$ are clear, and $F_i=G_j$ is impossible because
$F_i\supseteq\sigma$ while $\sigma\not\subseteq G_j$, as $a_{3-j}\notin G_j$. Finally
$|W|=(k-2)+4=k+2$, and each of the four faces is $W$ with one vertex removed.
\end{proof}

\section{Shifted complexes and a second-moment characterization}\label{sec:shifted}

Throughout this section, $K$ is a finite pure $k$-dimensional simplicial complex on a linearly
ordered vertex set, identified with $[n]=\{1,\dots,n\}$.

\begin{definition}[Componentwise order and shifted complex]
\label{def:componentwise}\label{def:shifted}
Let
\[
  F=\{f_1<\cdots<f_t\},\qquad G=\{g_1<\cdots<g_t\}
\]
be two subsets of $[n]$ of the same cardinality. Following Duval and
Reiner~\cite[Definition 2.1]{DuvalReiner}, we write $F\le_P G$ when
\[
  f_r\le g_r\qquad\text{for every }1\le r\le t.
\]
A simplicial complex $K$ on $[n]$ is \emph{shifted} if, whenever $G\in K$ and
$F\subseteq[n]$ satisfies $|F|=|G|$ and $F\le_P G$, one has $F\in K$. Equivalently, each
family of faces of fixed cardinality is shifted in the sense of Duval and Reiner.
\end{definition}

A simplicial complex $K$ is shifted precisely when it is closed under every elementary
replacement
\[
  \sigma\longmapsto(\sigma\setminus\{j\})\cup\{i\},
  \qquad i<j,\quad j\in\sigma,\quad i\notin\sigma.
\]
Indeed, the componentwise order is generated by such replacements. If $K$ is pure, it is enough
to check this condition on the facets: every face lies in a facet, and the required replacement
either remains inside that facet or follows from the corresponding replacement of the facet.

Here degrees count facets through vertices, rather than facets through ridges.

\begin{definition}[Vertex-degree and conjugate partitions]
\label{def:vertexdegree}\label{def:conjugate}
For a vertex $v\in[n]$, define its degree with respect to the $k$-faces of $K$ by
\[
  \deg_K(v)=\bigl|\{\sigma\in K_k:v\in\sigma\}\bigr|.
\]
This is the vertex degree in the $(k+1)$-uniform family $K_k$ used by Duval and
Reiner~\cite[Section 2]{DuvalReiner}.

After relabelling the vertices if necessary, write
\[
  a_1\ge a_2\ge\cdots\ge a_n,
  \qquad a_i=\deg_K(i).
\]
The partition
\[
  d^{\mathrm v}(K)=(a_1,a_2,\ldots,a_n)
\]
is the \emph{vertex-degree partition} of $K$. The superscript distinguishes it from the ordered
ridge upper degrees $d_1(K),d_2(K),\dots$.

Its conjugate partition ${d^{\mathrm v}(K)}^{\T}$ is defined by
\[
  d^{\mathrm v,\T}_r(K)=\bigl|\{i\in[n]:a_i\ge r\}\bigr|,\qquad r\ge1.
\]
Thus $d^{\mathrm v,\T}_r(K)$ is the number of vertices of degree at least $r$. This is the
conjugate degree partition used by Duval and Reiner~\cite[Section 2]{DuvalReiner}.
\end{definition}

A shifted ordering is automatically a nonincreasing degree ordering. Indeed, if $i<j$, every
facet containing $j$ but not $i$ is mapped injectively to a facet containing $i$ but not $j$ by
\[
  \sigma\longmapsto(\sigma\setminus\{j\})\cup\{i\}.
\]
Hence $\deg_K(i)\ge\deg_K(j)$.

Recall that $B_k=B_k(K)$ is the oriented boundary matrix,
$\Lup_{k-1}(K)=B_kB_k^{\T}$, and $M(K)=B_k^{\T}B_k$. These matrices and their spectra are
independent of the chosen orientations up to orthogonal similarity.

\begin{definition}[Nonzero spectral partition]\label{def:spectrum}
Let
\[
  \lambda_1(K)\ge\lambda_2(K)\ge\cdots>0
\]
be the nonzero eigenvalues of $\Lup_{k-1}(K)$, counted with multiplicity, and write
\[
  \lambda(K)=(\lambda_1(K),\lambda_2(K),\ldots).
\]
We regard $\lambda(K)$ as a partition and suppress trailing zeroes.
\end{definition}

Duval and Reiner formulate their theorem for uniform set families. Applied to the
$(k+1)$-uniform family $K_k$, it takes the following form.

\begin{theorem}[Duval--Reiner~\cite{DuvalReiner}]\label{thm:DR}
If $K$ is a shifted pure $k$-dimensional simplicial complex, then
\[
  \lambda(K)={d^{\mathrm v}(K)}^{\T}.
\]
\end{theorem}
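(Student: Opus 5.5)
The plan is to prove Theorem~\ref{thm:DR} by induction on the number of vertices $n$. The induction uses the standard decomposition of a shifted complex at its last vertex $n$. Since $K$ is shifted and $n$ is the largest vertex, the facets split into those not containing $n$, which form a shifted family $\Delta$ on $[n-1]$, and those containing $n$. The latter are $\sigma\cup\{n\}$ with $\sigma$ running through a shifted $k$-element family $\Lambda$ on $[n-1]$. Shiftedness gives $\Lambda\subseteq\partial$-closure of $\Delta$, in the sense that for $\sigma\in\Lambda$ and any $i\in[n-1]\setminus\sigma$ we have $\sigma\cup\{i\}\in\Delta$.

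The first step is to write the up-Laplacian $L=\Lup_{k-1}(K)$ in block form relative to the partition of ridges into those avoiding $n$ and those of the form $\tau\cup\{n\}$. The target is a Duval--Reiner-type recursion: the nonzero spectrum of $\Lup_{k-1}(K)$ should equal the nonzero spectrum of $\Lup_{k-1}(\Delta)$, enlarged by adding $1$ to the eigenvalues coming from the ``link'' part, together with eigenvalues associated to $\Lambda$. Concretely, I would use the identity $L_{\mathrm{tot}}$ on the simplex with vertex set $[n]$. Coning by $n$ intertwines the boundary operators, via $\partial(\sigma*n)=\partial\sigma*n\pm\sigma$. This lets one build eigenvectors of $L$ from eigenvectors of $\Lup_{k-1}(\Delta)$ and of $\Lup_{k-2}(\Lambda)$, where $\Lambda$ is viewed as a $(k-1)$-dimensional shifted family.

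I expect the resulting recursion to take the form
\[
 \lambda(K)=\lambda(\Delta)\ \text{``}\cup\text{''}\ \bigl(\lambda(\Lambda)+\ones\bigr)\ \text{with adjustments by } |\Lambda| ,
\]
and the degree side to obey the matching combinatorial recursion. The second step is exactly this combinatorial check. Vertex $n$ has degree $|\Lambda|$, and each vertex $i<n$ has $\deg_K(i)=\deg_\Delta(i)+\deg_\Lambda(i)$. One then verifies that the conjugate partition ${d^{\mathrm v}(K)}^{\T}$ obeys the same recursion as the spectrum, using the induction hypothesis for $\Delta$ and for $\Lambda$. Because $\Lambda$ has dimension $k-1$, the induction must run on pairs $(n,k)$. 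The base cases are $k=0$, where the family consists of vertices and the Laplacian is $J$ with spectrum $(m)$, and the empty complex.

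The main obstacle is the first step: proving rigorously that the spectrum decomposes along the cone, so that the eigenvectors built from $\Delta$ and $\Lambda$ span. The key identity I would use is $\partial_k\partial_k^{\T}+\partial_{k-1}^{\T}\partial_{k-1}=n\cdot I$ on the full simplex, which holds for the total Laplacian of $\binom{[n]}{}$. The containment $\Lambda*n$ with links in $\Delta$ makes the cross terms act as multiplication by constants. If this gets messy, the fallback is to cite Duval--Reiner's original argument, which uses the Kook--Reiner--Stanton spectral recursion for shifted families, and simply specialize it to the uniform family $K_k$.
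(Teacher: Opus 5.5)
The paper gives no proof of Theorem~\ref{thm:DR}; it quotes the result from Duval and Reiner, so your fallback is exactly what the paper does. Your own inductive route, however, has a genuine gap. The spectral recursion \emph{is} the theorem, and you only state an expectation for it, in a form that is wrong: the eigenvalues coming from $\Lambda$ are not shifted by $1$. Take the star with facets $12,13$ ($k=1$, $n=3$). Then $\Delta=\{12\}$ with $\lambda(\Delta)=(2)$, and $\Lambda=\{1\}$ with $\lambda(\Lambda)=(1)$. So $\lambda(\Delta)\cup(\lambda(\Lambda)+1)=(2,2)$, whereas $\lambda(K)=(3,1)$, and you do not specify any ``adjustment by $|\Lambda|$'' that repairs this. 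The $+1$ is the spectrum of the cone $\Lambda*n$ in isolation, whose Gram matrix is $I+\Ldown_{k-1}(\Lambda)$. For graphs it is also the rule for adjoining a dominating vertex, which in shifted order is the first vertex, not the last. The coupling with $\Delta$ changes it, and the ``spanning'' step you flag as the obstacle is exactly where this must be computed.

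The route can be completed, and the identity you name is the right tool once it is applied on the simplex on $[n-1]$. There it reads $\partial_k\partial_k^{\T}+\partial_{k-1}^{\T}\partial_{k-1}=(n-1)I$, using the augmented boundary if $k=1$. With suitable orientations, the column of $B_k(K)$ indexed by $\sigma\cup\{n\}$ is $(e_\sigma,\partial_{k-1}e_\sigma)$. Hence
\[
  M(K)=\begin{pmatrix}M(\Delta)&C\\ C^{\T}&I+X\end{pmatrix},
  \qquad X=\Ldown_{k-1}(\Lambda),\quad C_{F,\sigma}=[\sigma:F].
\]
Every $(k+1)$-subset of $[n-1]$ containing some $\sigma\in\Lambda$ lies in $\Delta$, so the columns of $C$ are full coboundaries. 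The identity above, together with $\partial_k^{\T}\partial_{k-1}^{\T}=0$, then gives $C^{\T}C=(n-1)I-X$ and $M(\Delta)C=(n-1)C$. Consequently, for $Xv=\mu v$ the span of $(Cv,0)$ and $(0,v)$ is invariant, with matrix $\begin{pmatrix}n-1&1\\ n-1-\mu&1+\mu\end{pmatrix}$ and eigenvalues $n$ and $\mu$. (If $\mu=n-1$ then $Cv=0$, and $(0,v)$ is an eigenvector for $n$.) On $\operatorname{col}(C)^{\perp}\oplus0$, the matrix $M(K)$ simply acts as $M(\Delta)$.

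So the correct recursion is: $\lambda(K)$ is $\lambda(\Delta)\cup\lambda(\Lambda)$ with $|\Lambda|$ copies of $n-1$ replaced by $n$. The degree side obeys the same rule. The degree sequences of $\Delta$ and $\Lambda$ are nonincreasing in the same order, so the conjugate of their sum is $d^{\mathrm v}(\Delta)^{\T}\cup d^{\mathrm v}(\Lambda)^{\T}$. Vertex $n$, of degree $|\Lambda|$, adds $1$ to the first $|\Lambda|$ parts of this union. All of these parts equal $n-1$, because the map $\sigma\mapsto\sigma\cup\{n-1\}$ shows $\deg_\Delta(n-1)+\deg_\Lambda(n-1)\ge|\Lambda|$. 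With your base case $k=0$ the induction then closes, giving a self-contained proof that the paper itself does not supply.
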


Relabel the vertices so that
\[
  a_1\ge a_2\ge\cdots\ge a_n,
  \qquad a_i=\deg_K(i).
\]
For $i<j$, we compare the admissible shifts from $j$ to $i$ with those that succeed.

\begin{definition}[Admissible and successful shifts]\label{def:NR}
For $i<j$, define
\[
  N_{ij}=\bigl|\{\sigma\in K_k:j\in\sigma,\ i\notin\sigma\}\bigr|
\]
and

\[
  R_{ij}=\bigl|\{\sigma\in K_k:j\in\sigma,\ i\notin\sigma,
  \ (\sigma\setminus\{j\})\cup\{i\}\in K_k\}\bigr|.
\]
Thus $N_{ij}$ counts admissible shifts, $R_{ij}$ counts successful shifts, and
$N_{ij}-R_{ij}$ counts failed shifts.
\end{definition}

\begin{lemma}\label{lem:NRshifted}
In the nonincreasing degree labeling,
\[
  K\text{ is shifted}
  \quad\text{if and only if}\quad
  N_{ij}=R_{ij}\text{ for every }i<j.
\]
\end{lemma}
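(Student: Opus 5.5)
The equivalence should follow by unwinding the definitions, using the remark that for a pure complex it suffices to check closure of the facet set under elementary replacements. Throughout we work in the fixed nonincreasing degree labeling $1,\dots,n$. By definition $R_{ij}\le N_{ij}$ always, and $N_{ij}-R_{ij}$ counts exactly the facets $\sigma\in K_k$ with $j\in\sigma$, $i\notin\sigma$, and $(\sigma\setminus\{j\})\cup\{i\}\notin K_k$. So the condition $N_{ij}=R_{ij}$ for all $i<j$ says precisely that $K_k$ is closed under every elementary replacement $\sigma\mapsto(\sigma\setminus\{j\})\cup\{i\}$ with $i<j$.

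For the forward direction, assume $K$ is shifted in this labeling. For a facet $\sigma$ with $j\in\sigma$ and $i\notin\sigma$, the set $\sigma'=(\sigma\setminus\{j\})\cup\{i\}$ has the same cardinality and satisfies $\sigma'\le_P\sigma$. Indeed, sorting $\sigma'$ only moves the entry $j$ down to the smaller value $i$, which can only decrease each order statistic. Hence $\sigma'\in K$. Since $|\sigma'|=k+1$, we have $\sigma'\in K_k$, so every admissible shift succeeds and $N_{ij}=R_{ij}$.

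For the converse, assume $N_{ij}=R_{ij}$ for all $i<j$, so $K_k$ is closed under elementary replacements. First we show that $K_k$ is down-closed in $\le_P$. Suppose $G\in K_k$ and $F\le_P G$ with $|F|=|G|$. We induct on $\sum_r(g_r-f_r)$. If $F\ne G$, pick the largest $r$ with $f_r<g_r$ and set $j=g_r$. The value $j$ is not in $F$: positions above $r$ agree, and $f_r<j$. Then choose $i=f_r$ if $f_r\notin G$. Otherwise, choose a suitable smaller element of $F\setminus G$, which exists since $|F|=|G|$ and $F\ne G$, and verify that $G'=(G\setminus\{j\})\cup\{i\}$ still satisfies $F\le_P G'\le_P G$ with strictly smaller sum. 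Cleaner still is to use the standard fact, asserted in the paper, that $\le_P$ is generated by elementary replacements. Then $G'\in K_k$ by hypothesis, and induction gives $F\in K_k$.

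Finally, lower-dimensional faces are handled by purity. Let $\tau\in K$ and $\tau'\le_P\tau$ with $|\tau'|=|\tau|$, and choose a facet $\sigma\supseteq\tau$. Apply the sequence of elementary replacements taking $\tau$ to $\tau'$ to $\sigma$ one step at a time. At each step, if the incoming vertex $i$ already lies in the current facet, the step is absorbed: the face stays inside that facet. Otherwise it is an elementary replacement of a facet, which stays in $K_k$. At the end $\tau'$ lies in a facet, hence $\tau'\in K$.

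The main obstacle is purely bookkeeping: verifying that $\le_P$ is generated by elementary replacements, and correctly handling the case where the incoming vertex already lies in the ambient facet. Both are routine; the paper essentially states them before Definition~\ref{def:vertexdegree}.
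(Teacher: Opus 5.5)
Your proposal is correct and follows the paper's route: $N_{ij}=R_{ij}$ for all $i<j$ says exactly that the facet set is closed under elementary shifts. For a pure complex this is equivalent to shiftedness, because $\le_P$ is generated by elementary replacements and a face inside a facet is either absorbed or carried along by the corresponding facet shift, as the paper records right after Definition~\ref{def:shifted}. You give more detail than the paper's one-line proof, and the step you left as ``choose a suitable smaller element'' can be completed as follows. Take $i$ to be the least $t$ with $|F\cap\{1,\dots,s\}|>|G\cap\{1,\dots,s\}|$ for all $t\le s<j$; this $i$ lies in $F\setminus G$ and gives $F\le_P(G\setminus\{j\})\cup\{i\}$.
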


\begin{proof}
This follows directly from Definition~\ref{def:NR}: the equality $N_{ij}=R_{ij}$ for every
$i<j$ says precisely that every admissible elementary shift of a facet succeeds, which is the
definition of shiftedness for a pure complex.
\end{proof}

\begin{proposition}\label{prop:degreemoment}
Put $m=f_k(K)$. Then
\[
  \sum_{r\ge1}\bigl(d^{\mathrm v,\T}_r(K)\bigr)^2
  =(k+1)^2m+2\sum_{i<j}N_{ij}.
\]
\end{proposition}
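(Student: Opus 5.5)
Our plan is a direct double count that uses only the definitions and the nonincreasing degree labeling. The first step is the standard identity for conjugate partitions. Since $d^{\mathrm v,\T}_r(K)$ counts the vertices $i$ with $a_i\ge r$, its square counts ordered pairs $(i,j)$ with $a_i\ge r$ and $a_j\ge r$, i.e.\ with $\min(a_i,a_j)\ge r$. Summing over $r\ge1$ then gives
\[
  \sum_{r\ge1}\bigl(d^{\mathrm v,\T}_r(K)\bigr)^2=\sum_{i,j\in[n]}\min(a_i,a_j)
  =\sum_i a_i+2\sum_{i<j}\min(a_i,a_j)=\sum_i a_i+2\sum_{i<j}a_j ,
\]
where the last equality uses $a_i\ge a_j$ for $i<j$. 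This is the only point where the labeling is used.

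Next we rewrite $\sum_{i<j}a_j$ in terms of the $N_{ij}$. For fixed $i<j$, split the facets containing $j$ according to whether they contain $i$. This gives $a_j=N_{ij}+c_{ij}$, where $c_{ij}$ is the number of facets containing both $i$ and $j$. Summing over $i<j$ yields
\[
  \sum_{i<j}a_j=\sum_{i<j}N_{ij}+\sum_{i<j}c_{ij}.
\]
Double counting incidences, $\sum_i a_i=(k+1)m$, because each facet has $k+1$ vertices. Likewise $\sum_{i<j}c_{ij}=\binom{k+1}{2}m$, because each facet contains $\binom{k+1}{2}$ unordered pairs of vertices.

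Assembling the pieces gives
\[
  \sum_{r\ge1}\bigl(d^{\mathrm v,\T}_r(K)\bigr)^2=(k+1)m+k(k+1)m+2\sum_{i<j}N_{ij}=(k+1)^2m+2\sum_{i<j}N_{ij},
\]
as claimed. There is no real obstacle. The one point needing care is the identity $\min(a_i,a_j)=a_j$ for $i<j$: with ties among equal degrees it still holds, so the formula is valid for any nonincreasing labeling, even though the individual $N_{ij}$ depend on how ties are broken.
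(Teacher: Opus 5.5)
Your proposal is correct and follows essentially the same route as the paper's proof: you express the left side as $\sum_{i,j}\min(a_i,a_j)$, use the nonincreasing labeling to get $\sum_i a_i+2\sum_{i<j}a_j$, split $a_j=c_{ij}+N_{ij}$, and evaluate $\sum_i a_i$ and $\sum_{i<j}c_{ij}$ by double counting. Your added remark that the identity holds for any tie-breaking among equal degrees is accurate and a harmless refinement.
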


\begin{proof}
By Definition~\ref{def:conjugate},
\[
  d^{\mathrm v,\T}_r(K)=\bigl|\{i:a_i\ge r\}\bigr|.
\]
Therefore
\[
\begin{split}
\sum_{r\ge1}\bigl(d^{\mathrm v,\T}_r(K)\bigr)^2
&=\sum_{r\ge1}
  \bigl|\{(i,j)\in[n]^2:a_i\ge r,\ a_j\ge r\}\bigr|\\
&=\sum_{i=1}^n\sum_{j=1}^n
  \bigl|\{r\ge1:r\le\min(a_i,a_j)\}\bigr|\\
&=\sum_{i=1}^n\sum_{j=1}^n\min(a_i,a_j).
\end{split}
\]
Since $a_1\ge\cdots\ge a_n$, for $i<j$ we have $\min(a_i,a_j)=a_j$. Hence
\[
\sum_{r\ge1}\bigl(d^{\mathrm v,\T}_r(K)\bigr)^2
=\sum_i a_i+2\sum_{i<j}a_j.
\]
For $i<j$, put
\[
  c_{ij}=\bigl|\{\sigma\in K_k:i,j\in\sigma\}\bigr|.
\]
Thus $c_{ij}$ is the number of facets containing both $i$ and $j$.
Every $k$-face counted by $a_j$ either contains $i$ or does not. Thus
\[
a_j=c_{ij}+N_{ij}.
\]
Consequently,
\[
\sum_{r\ge1}\bigl(d^{\mathrm v,\T}_r(K)\bigr)^2
=\sum_i a_i+2\sum_{i<j}c_{ij}+2\sum_{i<j}N_{ij}.
\]
Each $k$-face has $k+1$ vertices, so
\[
\sum_i a_i=(k+1)m.
\]
Each $k$-face contains exactly $\binom{k+1}{2}$ unordered pairs of vertices, so
\[
\sum_{i<j}c_{ij}=\binom{k+1}{2}m.
\]
Therefore
\[
\begin{split}
\sum_{r\ge1}\bigl(d^{\mathrm v,\T}_r(K)\bigr)^2
&=(k+1)m+2\binom{k+1}{2}m+2\sum_{i<j}N_{ij}\\
&=(k+1)^2m+2\sum_{i<j}N_{ij}.
\end{split}
\]
\end{proof}

\begin{proposition}\label{prop:spectralmoment}
Put $m=f_k(K)$. Then
\[
  \sum_r \lambda_r(K)^2
  =(k+1)^2m+2\sum_{i<j}R_{ij}.
\]
\end{proposition}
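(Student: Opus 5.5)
Since the $\lambda_r(K)$ are exactly the nonzero eigenvalues of $M=M(K)=B_k^{\T}B_k$ (Lemma~\ref{lem:gram}), the sum $\sum_r\lambda_r(K)^2$ equals $\tr(M^2)=\sum_{F,G}M_{F,G}^2$. I will compute this trace from the entry description~\eqref{eq:MS}, $M=(k+1)I+S(K)$. The diagonal contributes $(k+1)^2m$. Each off-diagonal entry is $\pm1$ exactly when the two facets share a ridge, and $0$ otherwise. Hence
\[
  \tr(M^2)=(k+1)^2m+2\,\#\{\{F,G\}\subseteq K_k: |F\cap G|=k\},
\]
that is, $(k+1)^2m$ plus twice the number of edges of $\Gamma(K)$. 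It therefore remains to show that the number of unordered pairs of facets sharing a ridge equals $\sum_{i<j}R_{ij}$.

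For this I will set up a bijection. Let $\{F,G\}$ be two facets sharing a ridge, and write $F=R\cup\{x\}$ and $G=R\cup\{y\}$ with $x\ne y$. Put $i=\min(x,y)$ and $j=\max(x,y)$, and let $\sigma$ be whichever of $F,G$ contains $j$. Then $j\in\sigma$, $i\notin\sigma$, and $(\sigma\setminus\{j\})\cup\{i\}$ is the other facet, which lies in $K_k$. So $\sigma$ is counted by $R_{ij}$.

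Conversely, suppose $\sigma$ is counted by $R_{ij}$. Then $\sigma$ and $\sigma'=(\sigma\setminus\{j\})\cup\{i\}$ are distinct facets whose intersection is $\sigma\setminus\{j\}$, a ridge. The pair $\{i,j\}$ is recovered as the symmetric difference $\sigma\triangle\sigma'$, and $\sigma$ is recovered as the member of the pair containing the larger element $j$.

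These two maps are mutually inverse. Thus the pairs of adjacent facets correspond bijectively to the triples $(i,j,\sigma)$ with $i<j$ and $\sigma$ counted by $R_{ij}$, which gives $|E(\Gamma(K))|=\sum_{i<j}R_{ij}$ and proves the proposition.

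The only point needing care is that the count is independent of the labeling and of the facet orientations. This holds because $M_{F,G}^2$ does not depend on signs, and because the chosen vertex order serves only to orient each adjacent pair as a shift from $j$ to $i$. I expect no real obstacle beyond checking injectivity in the bijection, namely that the unordered pair determines $i$, $j$ and $\sigma$ uniquely.
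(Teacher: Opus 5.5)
Your proposal is correct and follows the paper's proof: you expand $\tr(M^2)$ entrywise into the diagonal term $(k+1)^2m$ plus twice the number of adjacent facet pairs. You then identify that count with $\sum_{i<j}R_{ij}$ by the same symmetric-difference bijection, letting the member containing the larger vertex $j$ be the shifted facet.
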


\begin{proof}
Let $M=M(K)=B_k^{\T}B_k$. Its nonzero eigenvalues are the entries of $\lambda(K)$, so
\[
\sum_r \lambda_r(K)^2
=\tr(M^2)
=\sum_{\sigma,\tau\in K_k}M_{\sigma,\tau}^2.
\]
Every diagonal term contributes $(k+1)^2$. Let $A(K)$ be the number of unordered pairs of
distinct facets sharing a ridge. Every such pair contributes twice to the ordered sum, and hence
\[
\tr(M^2)=(k+1)^2m+2A(K).
\]
It remains to identify $A(K)$. Let $\{\sigma,\tau\}$ be an unordered pair of distinct facets
sharing a ridge. Then
\[
\sigma\triangle\tau=\{i,j\}
\]
for a unique pair of vertices $i<j$. Exactly one of $\sigma,\tau$ contains $j$ and not $i$; call it $\eta$. The other face is
\[
(\eta\setminus\{j\})\cup\{i\}.
\]
Thus the pair determines exactly one successful elementary shift counted by $R_{ij}$.

Conversely, every successful shift counted by some $R_{ij}$ produces an unordered pair of
distinct facets that differ in one vertex and hence share a ridge. The correspondence is
bijective, so
\[
A(K)=\sum_{i<j}R_{ij}.
\]
Substituting this into the trace formula gives
\[
\sum_r \lambda_r(K)^2
=(k+1)^2m+2\sum_{i<j}R_{ij}.
\]
\end{proof}

The two preceding formulas give the central identity of this section.

\begin{theorem}\label{thm:defect}
After the vertices are ordered by nonincreasing degree,
\[
\sum_{r\ge1}\bigl(d^{\mathrm v,\T}_r(K)\bigr)^2
-\sum_r \lambda_r(K)^2
=2\sum_{i<j}(N_{ij}-R_{ij}).
\]
In particular, the second-moment gap is twice the total number of failed elementary shifts of
facets.
\end{theorem}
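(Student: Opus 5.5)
The identity follows by subtracting the formula of Proposition~\ref{prop:spectralmoment} from that of Proposition~\ref{prop:degreemoment}. Both are stated for the same labelling, with vertices ordered by nonincreasing degree and $m=f_k(K)$. The common term $(k+1)^2m$ cancels, leaving
\[
  \sum_{r\ge1}\bigl(d^{\mathrm v,\T}_r(K)\bigr)^2-\sum_r\lambda_r(K)^2
  =2\sum_{i<j}N_{ij}-2\sum_{i<j}R_{ij}=2\sum_{i<j}(N_{ij}-R_{ij}).
\]

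For the interpretation, note that by Definition~\ref{def:NR} each $N_{ij}-R_{ij}$ counts the facets $\sigma$ with $j\in\sigma$, $i\notin\sigma$ and $(\sigma\setminus\{j\})\cup\{i\}\notin K_k$. These are exactly the failed elementary shifts from $j$ to $i$. Since $R_{ij}$ counts a subset of the facets counted by $N_{ij}$, every summand is a nonnegative integer, and summing over $i<j$ gives the total number of failed elementary shifts of facets.

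There is no genuine obstacle; the only point requiring care is consistency of the labelling. Proposition~\ref{prop:degreemoment} uses the nonincreasing degree order through the step $\min(a_i,a_j)=a_j$ for $i<j$. Proposition~\ref{prop:spectralmoment} holds for any labelling, because it only uses that two facets sharing a ridge differ in exactly one pair of vertices. Both propositions must therefore be evaluated with the same labelling, namely the nonincreasing degree order, so that the $N_{ij}$ and $R_{ij}$ refer to the same ordered pairs. If several vertices have equal degree, any tie-breaking works, since both formulas hold for every such ordering.
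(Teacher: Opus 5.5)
Your proposal is correct and is exactly the paper's argument: subtract Proposition~\ref{prop:spectralmoment} from Proposition~\ref{prop:degreemoment}, cancel the common term $(k+1)^2m$, and read off each $N_{ij}-R_{ij}$ as a count of failed shifts via Definition~\ref{def:NR}. The paper leaves implicit a point you make explicit and correctly: only Proposition~\ref{prop:degreemoment} uses the nonincreasing-degree labelling, and it is insensitive to how ties are broken.
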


\begin{proof}
This follows immediately from Propositions~\ref{prop:degreemoment}
and~\ref{prop:spectralmoment}, together with Definition~\ref{def:NR}.
\end{proof}

The defect formula immediately gives an inequality and its complete equality characterization.

\begin{theorem}\label{thm:momentcharacterization}
For every finite pure $k$-dimensional simplicial complex $K$,
\[
\sum_r \lambda_r(K)^2
\le
\sum_{r\ge1}\bigl(d^{\mathrm v,\T}_r(K)\bigr)^2.
\]
Moreover, equality holds if and only if $K$ is isomorphic to a shifted simplicial complex.

More precisely, if equality holds and the vertices are labeled so that
\[
\deg_K(1)\ge\deg_K(2)\ge\cdots\ge\deg_K(n),
\]
then $K$ is shifted in that labeling.
\end{theorem}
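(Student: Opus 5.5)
The plan is to read everything off the defect formula of Theorem~\ref{thm:defect}. Label the vertices so that $\deg_K(1)\ge\cdots\ge\deg_K(n)$. By Definition~\ref{def:NR}, $R_{ij}$ counts a subset of the facets counted by $N_{ij}$, so $N_{ij}-R_{ij}\ge0$ for every $i<j$. Hence the right-hand side $2\sum_{i<j}(N_{ij}-R_{ij})$ of Theorem~\ref{thm:defect} is nonnegative, which is exactly the asserted inequality. For this step we need the fact that both power sums are intrinsic to $K$. The conjugate partition depends only on the multiset of degrees, and $\lambda(K)$ depends only on $K$ up to orthogonal similarity. Therefore the particular nonincreasing labeling used in Theorem~\ref{thm:defect} does not affect the left-hand side.

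Next, suppose equality holds, and fix any nonincreasing degree labeling. The sum $\sum_{i<j}(N_{ij}-R_{ij})$ then vanishes. Since every term is nonnegative, each term vanishes, so $N_{ij}=R_{ij}$ for all $i<j$. Lemma~\ref{lem:NRshifted} then says that $K$ is shifted in this labeling. This proves the ``more precisely'' clause, and it shows in particular that $K$ is isomorphic to a shifted complex.

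For the converse, suppose $K\cong K'$ with $K'$ shifted. Both sides of the inequality are isomorphism invariants. The degree multiset is preserved under isomorphism, and the down-Laplacians of $K$ and $K'$ are conjugate by a signed permutation matrix, so their spectra agree. We may therefore assume $K$ itself is shifted. There are two routes from here. The first is to cite Theorem~\ref{thm:DR}: it gives $\lambda(K)={d^{\mathrm v}(K)}^{\T}$, and equal partitions have equal second power sums. The second is self-contained. In the shifted labeling, degrees are nonincreasing, as observed after Definition~\ref{def:conjugate}, and Lemma~\ref{lem:NRshifted} gives $N_{ij}=R_{ij}$ for all $i<j$. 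The defect in Theorem~\ref{thm:defect} is then zero. I would present the second route and mention the first.

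The only delicate point is the direction from equality to shiftedness when several vertices share the same degree, since the nonincreasing labeling is not unique. The argument above handles this without extra work: every nonincreasing labeling makes the defect zero, so $K$ is shifted in every such labeling. No choice among tied vertices is needed.
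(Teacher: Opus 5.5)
Your proposal is correct and follows essentially the same route as the paper. The inequality comes from the nonnegative summands $N_{ij}-R_{ij}$ in the defect formula of Theorem~\ref{thm:defect}, and Lemma~\ref{lem:NRshifted} converts zero defect into shiftedness in a nonincreasing degree labeling. The converse is your self-contained route: a shifted labeling is nonincreasing in degree, so the defect vanishes. Your remarks that both power sums are independent of the labeling and invariant under isomorphism, which the paper leaves implicit, are correct.
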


\begin{proof}
Choose a nonincreasing degree labeling. By Theorem~\ref{thm:defect}, the difference between the
right- and left-hand sides is $2\sum_{i<j}(N_{ij}-R_{ij})$, whose summands are nonnegative.
This proves the inequality. Equality holds precisely when $N_{ij}=R_{ij}$ for every $i<j$,
which, by Lemma~\ref{lem:NRshifted}, is equivalent to $K$ being shifted in this labeling.
Conversely, a shifted labeling is nonincreasing in degree, so the same lemma and
Theorem~\ref{thm:defect} give equality.
\end{proof}

\begin{remark}
Define
\[
\operatorname{shdef}(K)
=\sum_{i<j}(N_{ij}-R_{ij}),
\]
after arranging the vertices by nonincreasing degree. Then Theorem~\ref{thm:defect} says
\[
\operatorname{shdef}(K)
=\frac12\left(
\sum_{r\ge1}\bigl(d^{\mathrm v,\T}_r(K)\bigr)^2
-\sum_r \lambda_r(K)^2
\right).
\]
Thus the second-moment gap has an exact combinatorial interpretation: one half of it is the
number of failed elementary shifts.
\end{remark}

We can now characterize equality in the Duval--Reiner spectral identity.

\begin{theorem}[Exact Laplacian--degree equality]\label{thm:exactshifted}
Let $K$ be a finite pure $k$-dimensional simplicial complex. Then
\[
\lambda(K)={d^{\mathrm v}(K)}^{\T}
\]
if and only if $K$ is isomorphic to a shifted simplicial complex.

Moreover, if equality holds, then every nonincreasing degree labeling is shifted.
\end{theorem}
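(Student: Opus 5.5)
The plan is to reduce both directions to results already in hand: Theorem~\ref{thm:DR} for sufficiency, and the second-moment characterization Theorem~\ref{thm:momentcharacterization} for necessity.

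For the "if" direction, suppose $K\cong K'$ with $K'$ shifted. An isomorphism of complexes permutes vertices. It therefore preserves the vertex-degree multiset, and hence ${d^{\mathrm v}}^{\T}$. It also conjugates the boundary matrix by signed permutation matrices, which comes from relabelling faces and possibly reversing orientations. So $\Lup_{k-1}(K)$ and $\Lup_{k-1}(K')$ are orthogonally similar and $\lambda(K)=\lambda(K')$. Since $K'$ is pure, being isomorphic to the pure complex $K$, Theorem~\ref{thm:DR} gives $\lambda(K')={d^{\mathrm v}(K')}^{\T}$, and the equality transfers to $K$.

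For the "only if" direction, assume $\lambda(K)={d^{\mathrm v}(K)}^{\T}$ as partitions, that is, as multisets of positive numbers with trailing zeros suppressed. Then the sums of squares agree:
\[
\sum_r\lambda_r(K)^2=\sum_{r\ge1}\bigl(d^{\mathrm v,\T}_r(K)\bigr)^2 .
\]
Now fix any labeling with $\deg_K(1)\ge\cdots\ge\deg_K(n)$. The equality clause of Theorem~\ref{thm:momentcharacterization}, via Theorem~\ref{thm:defect} and Lemma~\ref{lem:NRshifted}, says $N_{ij}=R_{ij}$ for all $i<j$ in that labeling, so $K$ is shifted in that labeling. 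This gives both the isomorphism statement and the "moreover" clause, since the labeling was an arbitrary nonincreasing degree labeling.

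I expect no genuine obstacle here; the work was done in the defect formula. The one point requiring care is that the defect identity of Theorem~\ref{thm:defect} is valid for every nonincreasing degree labeling, not just a particular one. Its derivation in Proposition~\ref{prop:degreemoment} used only $\min(a_i,a_j)=a_j$ for $i<j$, which holds whenever ties are broken arbitrarily. Checking that remark is what makes the "every labeling" conclusion legitimate.
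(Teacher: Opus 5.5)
Your proposal is correct and follows the paper's own proof. Necessity comes from equating the second power sums and applying the equality clause of Theorem~\ref{thm:momentcharacterization} in an arbitrary nonincreasing degree labeling, and sufficiency comes from Theorem~\ref{thm:DR} applied to a shifted labeling. Your extra checks, that both sides are isomorphism invariants and that the defect identity does not depend on how ties are broken, only make explicit what the paper leaves implicit.
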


\begin{proof}
Assume first that
\[
\lambda(K)={d^{\mathrm v}(K)}^{\T}.
\]
Then the two sequences have the same second power sum:
\[
\sum_r \lambda_r(K)^2
=
\sum_{r\ge1}\bigl(d^{\mathrm v,\T}_r(K)\bigr)^2.
\]
Theorem~\ref{thm:momentcharacterization} implies that $K$ is shifted after the vertices are
ordered by nonincreasing degree.

Conversely, if $K$ is isomorphic to a shifted complex, choose a shifted labeling. The
Duval--Reiner theorem, Theorem~\ref{thm:DR}, gives
\[
\lambda(K)={d^{\mathrm v}(K)}^{\T}.
\]
\end{proof}

\begin{corollary}\label{cor:weakerhypothesis}
A finite pure $k$-dimensional simplicial complex $K$ is isomorphic to a shifted complex if and
only if
\[
  \sum_r \lambda_r(K)^2
  =\sum_{r\ge1}\bigl(d^{\mathrm v,\T}_r(K)\bigr)^2.
\]
\end{corollary}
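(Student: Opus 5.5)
This corollary is a direct restatement of Theorem~\ref{thm:momentcharacterization}, so the plan is simply to read off both directions from that theorem. We use no new spectral or combinatorial input.

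First, relabel the vertices so that $\deg_K(1)\ge\cdots\ge\deg_K(n)$. Both sides of the displayed identity are invariant under relabelling: the nonzero spectrum of $\Lup_{k-1}(K)$ does not change up to orthogonal similarity, and the conjugate degree partition depends only on the multiset of degrees. Suppose the second power sums agree. Then Theorem~\ref{thm:defect} makes $\sum_{i<j}(N_{ij}-R_{ij})$ vanish. Each summand is a nonnegative count, since $R_{ij}\le N_{ij}$ by Definition~\ref{def:NR}, so $N_{ij}=R_{ij}$ for all $i<j$. Lemma~\ref{lem:NRshifted} then shows that $K$ is shifted in this labelling, and in particular $K$ is isomorphic to a shifted complex.

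Conversely, suppose $K$ is isomorphic to a shifted complex, and transport the labelling so that $K$ itself is shifted. As noted after Definition~\ref{def:conjugate}, a shifted labelling is automatically a nonincreasing degree ordering, so Theorem~\ref{thm:defect} applies in this labelling. Lemma~\ref{lem:NRshifted} gives $N_{ij}=R_{ij}$, so the gap is zero and the power sums agree. Alternatively, Theorem~\ref{thm:DR} gives $\lambda(K)={d^{\mathrm v}(K)}^{\T}$ outright, which implies equality of all power sums.

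There is no real obstacle. The one point that needs a sentence of care is that Theorem~\ref{thm:defect} is only asserted in a nonincreasing degree labelling. This is why the converse direction appeals to the fact that shifted labellings are degree-nonincreasing. In the tie case, the shifted labelling may differ from an arbitrarily chosen nonincreasing ordering, but this does not matter because we work directly in the shifted labelling.
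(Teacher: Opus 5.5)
Your proof is correct and follows the paper's route. The paper proves the corollary by citing the equality statement of Theorem~\ref{thm:momentcharacterization}, whose proof is exactly your argument: Theorem~\ref{thm:defect}, nonnegativity of $N_{ij}-R_{ij}$, Lemma~\ref{lem:NRshifted}, and the fact that a shifted labelling is degree-nonincreasing. Your handling of ties and your alternative converse via Theorem~\ref{thm:DR} (as in Theorem~\ref{thm:exactshifted}) are both sound.
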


\begin{proof}
This is exactly the equality statement in Theorem~\ref{thm:momentcharacterization}.
\end{proof}

\section{The largest up-Laplacian eigenvalue}\label{sec:largest}

Throughout this section we abbreviate
\[
  D=d_1(K)=\max_{\rho\in K_{k-1}}d(\rho),
\]
and we write $S=S(K)$ for the signed facet-adjacency matrix of Definition~\ref{def:sigma}. The
object of study is
\begin{equation}\label{eq:Qdef}
  Q=Q(K)=(D+k)I-B_k^{\T}B_k=(D-1)I-S ,
\end{equation}
the second equality being~\eqref{eq:MS}. The second form is independent of the dimension except
through the signed graph itself, and it is the form we use.

\subsection{The fan vector and the spectral reformulation}

\begin{definition}[Fan and fan vector]\label{def:fanvector}
Let $\rho$ be a ridge of degree $D$ and let
\[
  \Fset_\rho=\{F_1,\dots,F_D\},\qquad F_i=\rho\cup\{v_i\},
\]
be the set of facets containing $\rho$. We call $\Fset_\rho$ the \emph{fan} of $\rho$ and define
its \emph{fan vector} $x_\rho\in\R^{K_k}$ by
\begin{equation}\label{eq:fanvector}
  {(x_\rho)}_F=\begin{cases}[\rho:F], & \rho\subset F,\\ 0,&\text{otherwise.}\end{cases}
\end{equation}
\end{definition}

Building the incidence signs into the vector, rather than reorienting the fan, makes $x_\rho$
independent of any choice and lets us avoid switching in the statements below.

\begin{proposition}\label{prop:reform}
One always has $\lambda_1(S)\ge D-1$, and hence
$\lambda_1(\Lup_{k-1}(K))\ge D+k$. Moreover the following are equivalent:
\[
  \lambda_1\bigl(\Lup_{k-1}(K)\bigr)=D+k,\qquad
  \lambda_1(S)=D-1,\qquad
  Q\succeq0 .
\]
\end{proposition}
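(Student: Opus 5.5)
The plan is to use the fan vector $x_\rho$ as a Rayleigh-quotient test vector for $S$, and then read off the equivalences from \eqref{eq:lamS} and \eqref{eq:Qdef}.

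Fix a ridge $\rho$ of degree $D$ with fan $\Fset_\rho=\{F_1,\dots,F_D\}$. By Definition~\ref{def:fanvector}, $x_\rho$ has exactly $D$ nonzero entries, each equal to $\pm1$, so $x_\rho^{\T}x_\rho=D$. Any two distinct $F_i,F_j$ meet exactly in $\rho$, as in the first paragraph of the proof of Lemma~\ref{lem:switch}. Hence $S_{F_i,F_j}=\eps(F_i,F_j)=[\rho:F_i][\rho:F_j]$. This gives
\[
  x_\rho^{\T}Sx_\rho=\sum_{i\ne j}[\rho:F_i]^2[\rho:F_j]^2=D(D-1).
\]
The Rayleigh quotient of $S$ at $x_\rho$ is therefore $D-1$, and Courant--Fischer yields $\lambda_1(S)\ge D-1$. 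Equivalently, one can compute $B_kx_\rho$: its $\rho$-coordinate is $\sum_i[\rho:F_i]^2=D$. Its coordinate at each other ridge $\tau\subset F_i$ is $\pm1$, and such $\tau$ lie in only one fan facet, since $F_i\cap F_j=\rho$. This gives $\|B_kx_\rho\|^2=D^2+Dk$, hence a quotient of $D+k$ for $M$, which is a consistent cross-check. Combined with \eqref{eq:lamS}, we get $\lambda_1(\Lup_{k-1}(K))=k+1+\lambda_1(S)\ge D+k$. The degenerate case where $K_k$ is empty does not arise, since $K$ is $k$-dimensional and so $D\ge1$.

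For the equivalences, \eqref{eq:lamS} immediately gives $\lambda_1(\Lup_{k-1}(K))=D+k$ if and only if $\lambda_1(S)=D-1$. Next, $Q=(D-1)I-S$ has eigenvalues $D-1-\lambda_i(S)$. So $Q\succeq0$ holds if and only if $\lambda_1(S)\le D-1$, and by the first part this is equivalent to $\lambda_1(S)=D-1$.

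No step is a genuine obstacle. The only point needing care is that the off-diagonal entries within the fan are exactly $[\rho:F_i][\rho:F_j]$ with no further contributions; this holds because two distinct facets share at most one ridge, as recorded before Definition~\ref{def:sigma}.
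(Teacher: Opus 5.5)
Your proposal is correct and follows the paper's proof essentially verbatim. You use the fan vector as a Rayleigh test vector with quotient $D-1$ for $S$, transfer this via \eqref{eq:lamS}, and read off the equivalences from $Q=(D-1)I-S$. Your cross-check via $\|B_kx_\rho\|^2=D^2+Dk$ is also correct, but it is not needed.
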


\begin{proof}
For distinct $F_i,F_j\in\Fset_\rho$ we have $F_i\cap F_j=\rho$ by the first paragraph of the
proof of Lemma~\ref{lem:switch}, so $S_{F_i,F_j}=[\rho:F_i][\rho:F_j]$. Therefore
\[
  x_\rho^{\T}Sx_\rho=\sum_{i\ne j}{[\rho:F_i]}^{2}{[\rho:F_j]}^{2}=D(D-1),\qquad
  x_\rho^{\T}x_\rho=D ,
\]
so the Rayleigh quotient of $x_\rho$ equals $D-1$ and $\lambda_1(S)\ge D-1$. Together
with~\eqref{eq:lamS} this gives the lower bound for the up-Laplacian, which is the inequality of
Duval and Reiner~\cite{DuvalReiner}.

Equation~\eqref{eq:lamS} shows that the first two equality statements are equivalent, and
by~\eqref{eq:Qdef}, $Q\succeq0$ is equivalent to $\lambda_1(S)\le D-1$. The lower bound just
proved turns this inequality into an equality.
\end{proof}

\subsection{Principal minors and signed Sachs subgraphs}

For $\Fset\subseteq K_k$ write $Q[\Fset]$ and $S[\Fset]$ for the corresponding principal
submatrices, and set
\[
  \Phi(\Fset)=\det Q[\Fset]=\det\bigl((D-1)I-S[\Fset]\bigr).
\]

\begin{theorem}\label{thm:allminors}
$\lambda_1(\Lup_{k-1}(K))=D+k$ if and only if $\Phi(\Fset)\ge0$ for every nonempty
$\Fset\subseteq K_k$ for which $\Gamma[\Fset]$ is connected.
\end{theorem}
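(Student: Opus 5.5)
By Proposition~\ref{prop:reform}, the equality $\lambda_1(\Lup_{k-1}(K))=D+k$ is equivalent to $Q\succeq0$, where $Q=(D-1)I-S$. So the plan is to prove that $Q\succeq0$ holds if and only if every principal minor $\Phi(\Fset)=\det Q[\Fset]$ with $\Gamma[\Fset]$ connected is nonnegative. This is a restricted form of the classical criterion: a real symmetric matrix is positive semidefinite exactly when all of its principal minors are nonnegative.

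The forward direction is immediate. If $Q\succeq0$, then every principal submatrix $Q[\Fset]$ is positive semidefinite, by Lemma~\ref{lem:compress}\ref{it:comp-b} or directly from Rayleigh quotients. Hence its determinant, being a product of nonnegative eigenvalues, is nonnegative. This applies in particular to the connected $\Fset$.

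For the converse, we first reduce to connected index sets. Let $\Fset$ be an arbitrary nonempty set of facets, and split $\Gamma[\Fset]$ into connected components $\Fset_1,\dots,\Fset_t$. Since $S_{F,G}=0$ whenever $F,G$ lie in different components, $Q[\Fset]$ is block diagonal after reordering. Therefore
\[
  \Phi(\Fset)=\prod_{s=1}^{t}\Phi(\Fset_s).
\]
Each factor is nonnegative by hypothesis, so $\Phi(\Fset)\ge0$ for every nonempty $\Fset\subseteq K_k$. Thus every principal minor of $Q$ is nonnegative.

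It remains to justify that nonnegativity of all principal minors of a symmetric matrix implies positive semidefiniteness. This is the one step requiring an argument rather than bookkeeping. Consider $p(t)=\det(tI+Q)$. Its coefficient of $t^{N-j}$ is the sum $E_j$ of the $j\times j$ principal minors of $Q$, so $E_j\ge0$ for every $j$, with $E_0=1$. Then for $t>0$ we have $p(t)=\sum_j E_j t^{N-j}\ge t^N>0$. Hence $p$ has no positive roots. Since $Q$ is symmetric, the roots of $p$ are the numbers $-\mu$ with $\mu$ an eigenvalue of $Q$. It follows that no eigenvalue of $Q$ is negative, that is, $Q\succeq0$. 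Proposition~\ref{prop:reform} then gives $\lambda_1(\Lup_{k-1}(K))=D+k$.

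The main points needing care are the coefficient identity for $\det(tI+Q)$ in terms of principal minors, which is standard, and the observation that disconnected $\Fset$ really give a block-diagonal $Q[\Fset]$. The latter holds because the diagonal entries of $Q$ are constant ($D-1$) and the off-diagonal entries of $Q$ are exactly $-S$, which vanish between non-adjacent facets. I expect no genuine obstacle.
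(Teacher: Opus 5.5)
Your proof is correct and follows the paper's argument. You reduce to $Q\succeq0$ via Proposition~\ref{prop:reform}, apply the principal-minor criterion for positive semidefiniteness, and restrict to connected collections using the block-diagonal factorization $\Phi(\Fset)=\prod_s\Phi(\Fset_s)$; the only difference is that you prove the principal-minor criterion through the coefficients of $\det(tI+Q)$, whereas the paper quotes it as standard.
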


\begin{proof}
By Proposition~\ref{prop:reform}, the equality is equivalent to $Q\succeq0$, and a real symmetric
matrix is positive semidefinite if and only if all of its principal minors are nonnegative.

It remains to justify the restriction to connected collections. If $\Gamma[\Fset]$ has connected
components $\Fset_1,\dots,\Fset_t$, then no signed adjacency joins different components, so after
a simultaneous permutation of rows and columns
\[
  Q[\Fset]=Q[\Fset_1]\oplus\cdots\oplus Q[\Fset_t],\qquad
  \Phi(\Fset)=\prod_{j=1}^{t}\Phi(\Fset_j).
\]
Thus nonnegativity for connected collections implies it for every collection.
\end{proof}

The determinant can be written entirely in terms of signed matchings and cycles, which makes its
combinatorial content explicit.
\begin{definition}[Spanning elementary subgraph]\label{def:sachs}
Let $H_{\Fset}$ be the signed graph induced by $\Sigma(K)$ on $\Fset$. A \emph{spanning
elementary subgraph} $R$ of $H_{\Fset}$ is a spanning subgraph whose connected components are
isolated vertices, isolated edges, or simple cycles of length at least three. Let $i(R)$ be the
number of isolated vertices, $m(R)$ the number of isolated-edge components and $c(R)$ the number
of cycle components. For a signed cycle $C$ put $\eps(C)=\prod_{e\in E(C)}\eps(e)$, and let
$\eps(R)$ be the product of the signs of the cycle components of $R$, the empty product being $1$.
\end{definition}

\begin{proposition}[Signed Sachs expansion]\label{prop:sachs}
For every nonempty $\Fset\subseteq K_k$,
\begin{equation}\label{eq:sachs}
  \Phi(\Fset)=\sum_{R}{(-1)}^{m(R)+c(R)}\,2^{\,c(R)}\,\eps(R)\,{(D-1)}^{i(R)},
\end{equation}
the sum being over all spanning elementary subgraphs $R$ of $H_{\Fset}$.
\end{proposition}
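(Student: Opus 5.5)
We expand $\Phi(\Fset)=\det\bigl((D-1)I-S[\Fset]\bigr)$ by the Leibniz formula over permutations of $\Fset$ and then group the permutations according to their cycle structure; this is the classical derivation of the Sachs expansion, adapted to signed weights. Write $A=(D-1)I-S[\Fset]$. Its diagonal entries are $D-1$, because $S$ has zero diagonal. Its off-diagonal entries are $-\eps(F,G)$ when $F,G$ are adjacent in $H_\Fset$ and $0$ otherwise. In $\det A=\sum_\pi\sgn(\pi)\prod_F A_{F,\pi(F)}$, only those $\pi$ contribute for which every non-fixed point $F$ satisfies $F\sim\pi(F)$ in $H_\Fset$.

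The cycle decomposition of such a $\pi$ consists of three kinds of cycles. Fixed points contribute a factor $D-1$ each. Transpositions $(F\,G)$ contribute $A_{F,G}A_{G,F}=\eps(F,G)^2=1$. Cycles of length $\ell\ge3$ contribute $(-1)^\ell\prod_{e}\eps(e)=(-1)^\ell\eps(C)$, where $C$ is the underlying cycle in $H_\Fset$; this uses the symmetry $\eps(F,G)=\eps(G,F)$. Forgetting orientations therefore sends each contributing $\pi$ to a spanning elementary subgraph $R$. Its isolated vertices are the fixed points, its isolated edges are the transpositions, and its cycles are the longer cycles of $\pi$. Conversely, each $R$ arises from exactly $2^{c(R)}$ permutations, since each cycle component of length at least three can be traversed in either direction. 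Both orientations give the same weight, because the product of signs around $C$ does not depend on direction.

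It remains to track the signs. The sign of $\pi$ is $\prod(-1)^{\ell-1}$ over its cycles. A transposition contributes $\sgn=-1$ and weight $1$, hence $-1$ in total. An $\ell$-cycle with $\ell\ge3$ contributes $(-1)^{\ell-1}\cdot(-1)^\ell\eps(C)=-\eps(C)$. Fixed points contribute $D-1$ with sign $+1$. Summing over $R$ and multiplying by the multiplicity $2^{c(R)}$ gives exactly \eqref{eq:sachs}.

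The only step that needs care is this sign bookkeeping, in particular the cancellation of $(-1)^\ell$ from the weight against $(-1)^{\ell-1}$ from $\sgn(\pi)$. That cancellation is why each cycle carries a uniform factor $-\eps(C)$, independent of its length. The remaining steps are routine, and there is no genuine obstacle.
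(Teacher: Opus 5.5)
Your proposal is correct and follows essentially the same route as the paper: a Leibniz expansion of $\det\bigl((D-1)I-S[\Fset]\bigr)$ grouped by cycle type. Each transposition contributes $-1$, each cycle of length at least three contributes $-\eps(C)$ per traversal direction, and the two directions give the factor $2^{c(R)}$. Your explicit cancellation of $(-1)^{\ell-1}$ from $\sgn(\pi)$ against $(-1)^{\ell}$ from the matrix entries is exactly the sign bookkeeping the paper's short proof leaves implicit.
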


\begin{proof}
Expand $\det\bigl((D-1)I-S[\Fset]\bigr)$ by permutations. A fixed point contributes $D-1$. A
transposition supported on an edge $e$ contributes $-{\eps(e)}^{2}=-1$. A permutation cycle of length
at least three supported on a signed cycle $C$ contributes $-\eps(C)$, and the reverse traversal
of the same undirected cycle contributes the same quantity, so each cycle component contributes
$-2\eps(C)$. Multiplying over disjoint permutation cycles yields~\eqref{eq:sachs}.
\end{proof}

\subsection{Collections that are automatically harmless}

\begin{lemma}[Internal-degree reduction]\label{lem:gersh}
Let $\Fset\subseteq K_k$ be nonempty. If $d_{\Fset}\le D-1$, then $Q[\Fset]\succeq0$; in
particular $\Phi(\Fset)\ge0$.
\end{lemma}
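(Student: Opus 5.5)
The plan is to bound the spectrum of $S[\Fset]$ by its maximum absolute row sum, which is a Gershgorin-type estimate. First I will note that $S[\Fset]$ is a real symmetric matrix with zero diagonal. Its off-diagonal entries lie in $\{0,\pm1\}$, and the entry $S_{F,G}$ is nonzero exactly when $F$ and $G$ are adjacent in $\Gamma[\Fset]$. Hence the absolute row sum of row $F$ equals $\deg_{\Gamma[\Fset]}(F)$, and this is at most $d_{\Fset}$.

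Next I will apply the spectral-radius bound. Every eigenvalue $\mu$ of $S[\Fset]$ satisfies $|\mu|\le\max_F\sum_G|S_{F,G}|=d_{\Fset}$. To see this directly, take an eigenvector $z$ and an index $F$ with $|z_F|$ maximal; the $F$-th row of $S[\Fset]z=\mu z$ then gives $|\mu||z_F|\le\sum_G|S_{F,G}||z_G|\le d_{\Fset}|z_F|$. Consequently $\lambda_1(S[\Fset])\le d_{\Fset}\le D-1$.

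Finally I will use the identity $Q[\Fset]=(D-1)I-S[\Fset]$, which follows from \eqref{eq:Qdef} because taking a principal submatrix commutes with this expression. Its eigenvalues are $D-1-\mu\ge0$, so $Q[\Fset]\succeq0$. Then $\Phi(\Fset)=\det Q[\Fset]$ is a product of nonnegative eigenvalues and is therefore nonnegative.

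I do not expect a real obstacle here. The only point needing care is to use absolute values in the row sums, because the signs $\eps$ may be negative; the bound does not depend on those signs.
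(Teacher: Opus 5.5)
Your proposal is correct and matches the paper's proof essentially step for step. Both bound $\lambda_1(S[\Fset])$ by the maximum absolute row sum, which is at most $d_{\Fset}\le D-1$, and conclude that $Q[\Fset]=(D-1)I-S[\Fset]\succeq0$. Your only addition is the explicit eigenvector argument for the row-sum bound, where the paper simply cites Gershgorin's theorem.
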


\begin{proof}
The matrix $S[\Fset]$ is symmetric, has zero diagonal, and has entries of absolute value one
exactly at the adjacent pairs inside $\Fset$. Hence its $F$-th absolute row sum is the number of
neighbours of $F$ inside $\Fset$, which is at most $d_{\Fset}$. Gershgorin's theorem gives
$\lambda_1(S[\Fset])\le d_{\Fset}\le D-1$, so
$Q[\Fset]=(D-1)I-S[\Fset]\succeq0$.
\end{proof}

\begin{corollary}\label{cor:small}
If $|\Fset|\le D$, then $Q[\Fset]\succeq0$.
\end{corollary}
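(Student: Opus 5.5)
The plan is to reduce this to Lemma~\ref{lem:gersh}: it suffices to show $d_{\Fset}\le D-1$ whenever $|\Fset|\le D$. Inside $\Fset$, a facet $F$ has at most $|\Fset|-1$ neighbours, since there are only $|\Fset|-1$ other facets to be adjacent to. Therefore $\deg_{\Gamma[\Fset]}(F)\le |\Fset|-1\le D-1$ for every $F\in\Fset$. Taking the maximum over $F$ gives $d_{\Fset}\le D-1$.

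Lemma~\ref{lem:gersh} then applies directly. Its Gershgorin bound gives $\lambda_1(S[\Fset])\le D-1$, so $Q[\Fset]=(D-1)I-S[\Fset]\succeq0$.

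Two edge cases need a quick check. The statement presumably intends $\Fset$ to be nonempty, as in Lemma~\ref{lem:gersh}; if $\Fset=\varnothing$ the claim is vacuous. Also, $D\ge1$ holds because $K$ has at least one facet, so $D-1\ge0$. This covers the case $|\Fset|=1$, where $Q[\Fset]=(D-1)$ is a nonnegative $1\times1$ matrix.

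No step here is a real obstacle; the argument is just the observation that the degree inside $\Fset$ is bounded by $|\Fset|-1$. The only thing to confirm is that Lemma~\ref{lem:gersh} uses the unsigned degree and that diagonal entries of $S$ vanish, both of which are part of Definition~\ref{def:sigma}.
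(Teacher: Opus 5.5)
Your proof is correct and matches the paper's argument: a facet in $\Fset$ has at most $|\Fset|-1\le D-1$ neighbours in $\Gamma[\Fset]$, so $d_{\Fset}\le D-1$ and Lemma~\ref{lem:gersh} applies directly.
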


\begin{proof}
An induced graph on at most $D$ vertices has maximum degree at most $D-1$; apply
Lemma~\ref{lem:gersh}.
\end{proof}

Thus a negative principal minor can first occur at order $D+1$. Applying the same row-sum
argument to the full matrix yields a global upper bound. For graphs, the corresponding estimate
is the Anderson--Morley inequality; the following proposition gives its simplicial-complex
analogue.

\begin{proposition}[Anderson--Morley bound]\label{prop:AM}
For every finite $k$-dimensional simplicial complex $K$ one has
\[
  \lambda_1\bigl(\Lup_{k-1}(K)\bigr)\ \le\ \max_{F\in K_k}\ \sum_{\rho\subset F,\ \dim\rho=k-1} d(\rho).
\]
Consequently, if $\sum_{\rho\subset F}d(\rho)\le D+k$ for every $F\in K_k$, then
$\lambda_1(\Lup_{k-1}(K))=D+k$.
Here $D=d_1(K)$ denotes the largest upper degree of a ridge in $K$.
\end{proposition}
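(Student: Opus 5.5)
The plan is to bound the largest eigenvalue of the down-Laplacian $M=B_k^{\T}B_k=(k+1)I+S$ by a weighted row-sum (Gershgorin-type) estimate, rather than by the absolute row sums of $M$ itself. By Lemma~\ref{lem:gram}, $\lambda_1(\Lup_{k-1}(K))=\lambda_1(M)$, so it suffices to bound $\lambda_1(M)$.

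First, note that the absolute row sum of $M$ in row $F$ is
\[
  (k+1)+\#\{G\ne F: G\text{ shares a ridge with }F\}.
\]
Every facet $G\ne F$ adjacent to $F$ shares exactly one ridge $\rho\subset F$ with it, since two distinct $k$-faces share at most one ridge. Hence the number of neighbours of $F$ equals $\sum_{\rho\subset F}(d(\rho)-1)$, and the absolute row sum becomes
\[
  (k+1)+\sum_{\rho\subset F}d(\rho)-(k+1)=\sum_{\rho\subset F}d(\rho).
\]
Gershgorin's theorem, or the bound of $\lambda_1$ of a symmetric matrix by its maximum absolute row sum, then gives
\[
  \lambda_1(M)\le\max_{F}\sum_{\rho\subset F}d(\rho).
\]
The sign pattern of $S$ plays no role here, because only absolute values enter.

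For the consequence, combine this upper bound with the Duval--Reiner lower bound $\lambda_1\ge D+k$ of Proposition~\ref{prop:reform}. If every facet satisfies $\sum_{\rho\subset F}d(\rho)\le D+k$, the two bounds coincide, and equality follows.

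I expect no real obstacle. The only point needing care is the counting identity: each neighbour of $F$ is counted exactly once, through its unique common ridge with $F$, and the diagonal $k+1$ cancels the $k+1$ subtracted terms $-1$. If $K$ is not pure, ridges with $d(\rho)=0$ lie in no facet and never appear in the sum, so the argument is unaffected.
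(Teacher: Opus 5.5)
Your proposal is correct and follows the paper's proof essentially verbatim. Both arguments rest on two facts. First, each neighbour of $F$ is counted exactly once via its unique common ridge, so the absolute row sum of $B_k^{\T}B_k$ in row $F$ is $(k+1)+\sum_{\rho\subset F}(d(\rho)-1)=\sum_{\rho\subset F}d(\rho)$. Second, combining this upper bound with the lower bound of Proposition~\ref{prop:reform} gives the equality statement.

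One wording point: your opening sentence promises a weighted row-sum estimate ``rather than'' the absolute row sums of $M$. The argument you actually give uses exactly the unweighted absolute row sums of $M$, as the paper does.
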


\begin{proof}
In the row of $B_k^{\T}B_k$ indexed by $F$ the diagonal entry is $k+1$, and for every ridge
$\rho\subset F$ there are $d(\rho)-1$ other facets sharing $\rho$ with $F$, each contributing an
off-diagonal entry of absolute value one. Distinct ridges give distinct facets, since a facet
sharing $\rho$ with $F$ determines $\rho$ as its intersection with $F$. Thus the absolute row sum
equals
\[
  (k+1)+\sum_{\rho\subset F}\bigl(d(\rho)-1\bigr)=\sum_{\rho\subset F}d(\rho),
\]
and the spectral radius of a symmetric matrix is at most its maximum absolute row sum. The final
assertion follows by combining this upper bound with Proposition~\ref{prop:reform}.
\end{proof}

For $k=1$, Proposition~\ref{prop:AM} is the classical Anderson--Morley inequality
\[
  \lambda_1(L(G))\le\max_{uv\in E(G)}\bigl(d(u)+d(v)\bigr).
\]

\subsection{Clean maximum-degree ridges}

Let $\rho$ be a ridge of degree $D$ with fan $\Fset_\rho$.

\begin{lemma}\label{lem:atmosttwo}
Every facet $U\notin\Fset_\rho$ is adjacent to at most two members of $\Fset_\rho$.
\end{lemma}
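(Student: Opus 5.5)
We argue directly from the vertex-set description of adjacency, without any spectral input. Write $F_i=\rho\cup\{v_i\}$ for the members of the fan, and let $U\notin\Fset_\rho$ be a facet. Since $U$ does not contain $\rho$, we have $|U\cap\rho|\le k-1$. On the other hand, if $U$ shares a ridge with $F_i$, then $|U\cap F_i|=k$, and hence
\[
  |U\cap\rho|\ \ge\ k-1 .
\]
So $U$ adjacent to some $F_i$ forces $|U\cap\rho|=k-1$ exactly. Moreover, $U$ must then contain $v_i$, because $|U\cap F_i|=|U\cap\rho|+|U\cap\{v_i\}|$.

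Thus the neighbours of $U$ in the fan are exactly those $F_i$ with $v_i\in U$, provided $|U\cap\rho|=k-1$. Conversely, under $|U\cap\rho|=k-1$, each $v_i\in U$ gives $|U\cap F_i|=k$ and so an adjacency.

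It remains to count the $v_i$ lying in $U$. The vertices $v_i$ are distinct and lie outside $\rho$. Since $|U|=k+1$ and $|U\cap\rho|=k-1$, exactly two vertices of $U$ lie outside $\rho$, so at most two of the $v_i$ belong to $U$. Hence $U$ is adjacent to at most two members of $\Fset_\rho$.

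Alternatively, this is Lemma~\ref{lem:adj}\ref{it:adj-b} in disguise: two fan members $F_i,F_j$ adjacent to a common $U$ correspond to $U=(\rho\setminus\{r\})\cup\{v_i,v_j\}$. Either way, there is no real obstacle beyond the bookkeeping of the intersection sizes.
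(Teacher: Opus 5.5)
Your proof is correct and is essentially the paper's argument: adjacency to $F_i$ together with $\rho\not\subseteq U$ forces $v_i\in U$ and $|U\cap\rho|=k-1$. This leaves exactly two vertices of $U$ outside $\rho$, so at most two fan vertices can lie in $U$. Your closing aside presenting this as an instance of Lemma~\ref{lem:adj}\ref{it:adj-b} is imprecise, but it is also unnecessary, since the direct counting argument is complete on its own.
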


\begin{proof}
Suppose $U$ is adjacent to $F_i=\rho\cup\{v_i\}$, so $|U\cap F_i|=k$. If $v_i\notin U$ then
$U\cap F_i\subseteq\rho$, and $|U\cap F_i|=k=|\rho|$ would force $\rho\subseteq U$, contrary to
$U\notin\Fset_\rho$. Hence $v_i\in U$ and $|U\cap\rho|=k-1$, so
\[
  U=(\rho\setminus\{u\})\cup\{v_i,w\}
\]
for some $u\in\rho$ and some vertex $w$. Thus $U$ has exactly two vertices outside $\rho$, and at
most two of the fan vertices $v_1,\dots,v_D$ can occur among them.
\end{proof}

\begin{definition}[Clean ridge]\label{def:clean}
A ridge $\rho$ of degree $D$ is \emph{clean} if every facet outside $\Fset_\rho$ is adjacent to
either zero or two members of $\Fset_\rho$; equivalently, if no facet outside the fan is adjacent
to exactly one member of it.
\end{definition}

The following eigenvector formulation is the conceptual core of cleanliness.

\begin{proposition}\label{prop:fanvector}
Let $\rho$ be a ridge of degree $D$. Then $\rho$ is clean if and only if
$Sx_\rho=(D-1)x_\rho$.
Consequently, if $\lambda_1(\Lup_{k-1}(K))=D+k$, then every ridge of degree $D$ is clean.
\end{proposition}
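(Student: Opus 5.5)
The plan is to compute $Sx_\rho$ coordinate by coordinate and compare it with $(D-1)x_\rho$. Write $x=x_\rho$, supported on the fan $\Fset_\rho=\{F_1,\dots,F_D\}$ with $x_{F_i}=[\rho:F_i]$.

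First take a fan facet $F_i$. Its fan neighbours are the other $D-1$ members, each meeting $F_i$ exactly in $\rho$, as in the proof of Proposition~\ref{prop:reform}. Hence $S_{F_i,F_j}=[\rho:F_i][\rho:F_j]$ and
\[
  (Sx)_{F_i}=\sum_{j\ne i}[\rho:F_i]{[\rho:F_j]}^2=(D-1)[\rho:F_i]=(D-1)x_{F_i}.
\]
So the eigen-equation holds automatically on the fan, and everything reduces to the coordinates indexed by facets $U\notin\Fset_\rho$. For these $(D-1)x_U=0$, so we need $(Sx)_U=\sum_{F_i\sim U}\eps(U,F_i)[\rho:F_i]=0$.

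By Lemma~\ref{lem:atmosttwo}, this sum has zero, one, or two terms. With zero terms it vanishes. With exactly one term it equals $\pm1\neq0$. Thus $Sx=(D-1)x$ forces cleanliness, and conversely cleanliness reduces the claim to the case of exactly two fan neighbours $F_i,F_j$.

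The main step is to show that in this two-neighbour case
\[
  \eps(U,F_i)[\rho:F_i]+\eps(U,F_j)[\rho:F_j]=0.
\]
Multiplying by $\eps(U,F_i)[\rho:F_i]$, and using $\eps(F_i,F_j)=[\rho:F_i][\rho:F_j]$, this is equivalent to the triple product condition
\[
  \eps(U,F_i)\,\eps(U,F_j)\,\eps(F_i,F_j)=-1.
\]
The three facets $U,F_i,F_j$ pairwise share ridges. They have no common ridge, because the only common ridge of $F_i$ and $F_j$ is $\rho$, and $\rho\not\subset U$. So Lemma~\ref{lem:triangle} places all three in one abstract $(k+1)$-simplex, and Lemma~\ref{lem:sign} gives the triple product $-1$. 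Concretely, $U=(\rho\setminus\{u\})\cup\{v_i,v_j\}$ and the simplex is $\rho\cup\{v_i,v_j\}$. This sign computation is the only nontrivial point, and it is already packaged in those two lemmas.

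For the consequence, assume $\lambda_1(\Lup_{k-1}(K))=D+k$. Then Proposition~\ref{prop:reform} gives $\lambda_1(S)=D-1$. The proof of that proposition shows the Rayleigh quotient of $x_\rho$ equals $D-1$, so $x_\rho$ attains the maximum of the Rayleigh quotient of $S$ and is therefore an eigenvector for $D-1$, by the standard variational characterization. By the equivalence just proved, $\rho$ is clean.
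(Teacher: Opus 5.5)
Your proof is correct and follows the paper's argument essentially step for step. The eigen-equation holds automatically on the fan, and an outside facet with exactly one fan neighbour breaks it. The two-neighbour case cancels via Lemmas~\ref{lem:triangle} and~\ref{lem:sign}. The consequence follows because the fan vector attains the maximal Rayleigh quotient $D-1$.
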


\begin{proof}
Let $F_j\in\Fset_\rho$. Only the other fan members contribute to ${(Sx_\rho)}_{F_j}$, because
$x_\rho$ vanishes outside the fan, and each contributes
$\eps(F_j,F_i){(x_\rho)}_{F_i}=[\rho:F_j]{[\rho:F_i]}^{2}=[\rho:F_j]$. Hence
\[
  {(Sx_\rho)}_{F_j}=(D-1)[\rho:F_j]=(D-1){(x_\rho)}_{F_j},
\]
so the eigenvalue equation holds on the fan, whatever the orientations.

To complete the proof of the equivalence, let $U\notin\Fset_\rho$. Since ${(x_\rho)}_U=0$,
the eigenvalue equation at $U$ reads
\[
  \sum_{\substack{F_i\in\Fset_\rho\\ F_i\sim U}}\eps(U,F_i)\,[\rho:F_i]=0 .
\]
By Lemma~\ref{lem:atmosttwo}, the facet $U$ has zero, one, or two neighbours in the fan. With
no fan neighbour the sum is empty and therefore vanishes, whereas with exactly one fan neighbour
it equals $\pm1$ and does not vanish. If $U$ has two fan neighbours $F_i,F_j$, then
$F_i,F_j,U$ are pairwise adjacent but have no common ridge, because $F_i\cap F_j=\rho$ and
$U\notin\Fset_\rho$. Lemmas~\ref{lem:triangle} and~\ref{lem:sign} therefore give
\[
  \bigl(\eps(U,F_i)[\rho:F_i]\bigr)
  \bigl(\eps(U,F_j)[\rho:F_j]\bigr)
  =\eps(U,F_i)\eps(U,F_j)\eps(F_i,F_j)=-1 .
\]
Thus the two summands are opposite and cancel. Hence the eigenvalue equation holds outside the
fan if and only if no outside facet has exactly one fan neighbour, that is, if and only if
$\rho$ is clean.

For the last statement, suppose $\lambda_1(\Lup_{k-1}(K))=D+k$. Proposition~\ref{prop:reform}
gives $\lambda_1(S)=D-1$. The fan vector always has Rayleigh quotient $D-1$, so it attains
the maximum of the Rayleigh quotient and is therefore an eigenvector of $S$ for the eigenvalue
$D-1$. The equivalence just proved then gives cleanliness.
\end{proof}

\begin{remark}\label{rem:whynotsuff}
Proposition~\ref{prop:fanvector} shows that cleanliness of every ridge of degree $D$ is necessary
for the equality $\lambda_1(\Lup_{k-1}(K))=D+k$. It is not sufficient.
Example~\ref{ex:dim3} below gives an explicit counterexample. This is the reason for the
additional global condition in Theorem~\ref{thm:refined}.
\end{remark}

\subsection{The complete classification of minors of order \texorpdfstring{$D+1$}{D+1}}\label{sub:Dplus1}

The fan-vector proof gives necessity and the correct conceptual interpretation. To replace
\emph{every} principal minor of order $D+1$ by the clean-ridge condition one must also show that
no other configuration of $D+1$ facets produces a negative determinant. This subsection supplies
that converse.

\begin{lemma}[Bordered fan determinant]\label{lem:borderedfan}
Let $\rho$ be a ridge of degree $D$ and let $U\notin\Fset_\rho$. Then
\[
  \det Q[\Fset_\rho\cup\{U\}]
  =\begin{cases}
    -D^{D-2}, & \text{if $U$ has exactly one neighbour in $\Fset_\rho$},\\[2pt]
    0, & \text{if $U$ has zero or two neighbours in $\Fset_\rho$}.
  \end{cases}
\]
Consequently, $\rho$ is clean if and only if
\[
  Q[\Fset_\rho\cup\{U\}]\succeq0
  \qquad\text{for every }U\notin\Fset_\rho.
\]
\end{lemma}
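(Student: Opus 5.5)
We reduce to a bordered block determinant. By Lemma~\ref{lem:switch} (the single-star version with $\Aset=\Fset_\rho$), we may reorient facets so that the principal block of $M$ on the fan is $kI+J$. Equivalently, $S[\Fset_\rho]=J-I$ and $Q[\Fset_\rho]=(D-1)I-(J-I)=DI-J$. This changes $Q$ only by a diagonal $\pm1$ similarity, so principal minors are unaffected. Writing $s\in\{0,\pm1\}^{D}$ for the column $-S[\Fset_\rho,\{U\}]$, we get
\[
  Q[\Fset_\rho\cup\{U\}]=\begin{pmatrix} DI-J & s\\ s^{\T} & D-1\end{pmatrix}.
\]
By Lemma~\ref{lem:atmosttwo}, $s$ has at most two nonzero entries.

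The block $DI-J$ is singular: it has eigenvalue $0$ on $\ones$ and $D$ on $\ones^{\perp}$. So we avoid the Schur complement and instead compute the determinant by the bordered-matrix formula
\[
  \det\begin{pmatrix}A&s\\ s^{\T}&c\end{pmatrix}=c\det A-s^{\T}\adj(A)\,s .
\]
Here $\det A=0$, and $\adj(DI-J)$ is a rank-one matrix. It is the product of the nonzero eigenvalues, $D^{D-1}$, times the projector $\ones\ones^{\T}/D$, so $\adj(A)=D^{D-2}J$. Hence
\[
  \det Q[\Fset_\rho\cup\{U\}]=-D^{D-2}(\ones^{\T}s)^2 .
\]
The cases then read off directly:
\begin{itemize}
\item With no fan neighbour, $s=0$.
\item With one neighbour, $\ones^{\T}s=\pm1$, which gives $-D^{D-2}$.
\item With two neighbours $F_i,F_j$, the entries are $-\eps'(U,F_i)$ and $-\eps'(U,F_j)$ in the switched orientation.
\end{itemize}
In the last case the proof of Proposition~\ref{prop:fanvector}, via Lemmas~\ref{lem:triangle} and~\ref{lem:sign}, gives $\eps'(U,F_i)\eps'(U,F_j)\eps'(F_i,F_j)=-1$. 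Since $\eps'(F_i,F_j)=1$ after switching, the two entries of $s$ are opposite, so $\ones^{\T}s=0$.

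For the consequence, the forward direction uses only the determinant computation, which already holds. If $\rho$ is not clean, some $U$ has exactly one fan neighbour, and then the minor is negative, so the block is not positive semidefinite. Conversely, suppose $\rho$ is clean. We show the block is positive semidefinite by a congruence. Take $y\in\R^{D}$ and $t\in\R$, and look at the quadratic form $y^{\T}(DI-J)y+2t\,s^{\T}y+(D-1)t^2$. Cleanliness gives $s\perp\ones$, so $s$ lies in the range of $DI-J$, where that matrix acts as $D$. The generalized Schur complement is therefore
\[
  (D-1)-s^{\T}(DI-J)^{+}s=(D-1)-\frac{|s|^2}{D}.
\]
Here $|s|^2\in\{0,2\}$, and since $D\ge1$ we have $D-1-2/D\ge0$ whenever $D\ge2$. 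When two neighbours exist, $D\ge2$ automatically. When there are no neighbours, the complement is $D-1\ge0$.

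The only real obstacle is keeping the signs straight in the two-neighbour case, and that is settled by reusing the triangle sign identity exactly as in Proposition~\ref{prop:fanvector}. The generalized Schur complement criterion (range condition plus a nonnegative complement) is standard.
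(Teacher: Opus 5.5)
Your determinant computation matches the paper's proof. You switch the fan so that its block of $Q$ becomes $DI-J$, use $\adj(DI-J)=D^{D-2}J$ with the bordered-determinant formula to obtain $-D^{D-2}(\ones^{\T}s)^2$, and show $\ones^{\T}s=0$ in the two-neighbour case by the triangle sign identity.

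You take a different route only in the final equivalence, in the direction ``clean $\Rightarrow$ positive semidefinite''. You use a generalized Schur complement:
\begin{itemize}
\item Cleanliness gives $s\perp\ones$, so $s$ lies in the range of $DI-J\succeq0$.
\item With $|s|^2\le2$, the complement is $D-1-|s|^2/D$.
\item This is nonnegative after checking that $D\ge2$ whenever two neighbours exist.
\end{itemize}
The paper instead notes that every proper principal submatrix of the bordered matrix has order at most $D$. By Corollary~\ref{cor:small} each is positive semidefinite, so positive semidefiniteness reduces to the sign of the determinant just computed.

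The paper's argument is shorter and needs nothing about $s$ beyond the determinant formula. Yours is self-contained, avoids the Gershgorin step, and gives the Schur complement explicitly, which describes the quadratic form quantitatively. Both arguments are correct.
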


\begin{proof}
Reversing facet orientations conjugates $Q$ by a diagonal sign matrix, and therefore leaves
every principal determinant unchanged. We may thus switch the fan coordinates by the diagonal
matrix with entries $[\rho:F_i]$ and leave the coordinate of $U$ unchanged. In these coordinates
the fan block of $Q$ is
\[
  A_D=(D-1)I_D-(J_D-I_D)=D I_D-J_D.
\]
This matrix has kernel $\spn\{\ones\}$ and eigenvalue $D$ with multiplicity $D-1$. Hence every
column of $\adj(A_D)$ is a multiple of $\ones$, and symmetry gives $\adj(A_D)=cJ_D$. Evaluating
a diagonal cofactor yields $c=D^{D-2}$, so
\begin{equation}\label{eq:adj}
  \adj(A_D)=D^{D-2}J_D .
\end{equation}

Let $b\in\R^D$ be the switched border column between $U$ and the fan. Its entries are
\[
  b_i=\begin{cases}
    -[\rho:F_i]\eps(F_i,U), & F_i\sim U,\\
    0, & F_i\not\sim U.
  \end{cases}
\]
Thus
\[
  Q[\Fset_\rho\cup\{U\}]\ \sim\ \begin{pmatrix}A_D & b\\ b^{\T} & D-1\end{pmatrix},
\]
where switching congruence preserves the determinant. Since $\det A_D=0$, the bordered
determinant formula and~\eqref{eq:adj} give
\begin{equation}\label{eq:bordered}
  \det Q[\Fset_\rho\cup\{U\}]=-b^{\T}\adj(A_D)b=-D^{D-2}{\bigl(\ones^{\T}b\bigr)}^{2} .
\end{equation}
If $U$ has no fan neighbour, then $b=0$. If it has exactly one fan neighbour, then
$\ones^{\T}b=\pm1$. If it has two fan neighbours, the cancellation in the proof of
Proposition~\ref{prop:fanvector} shows that the two nonzero entries of $b$ are opposite, and
hence $\ones^{\T}b=0$. Lemma~\ref{lem:atmosttwo} shows that these are all the possibilities,
and the determinant formula follows from~\eqref{eq:bordered}.

Every proper principal submatrix of $Q[\Fset_\rho\cup\{U\}]$ has order at most $D$ and is
positive semidefinite by Corollary~\ref{cor:small}. Hence the bordered matrix is positive
semidefinite if and only if its determinant is nonnegative. By the formula above, this occurs
if and only if $U$ has zero or two fan neighbours. Requiring this for every
$U\notin\Fset_\rho$ is precisely the definition of cleanliness.
\end{proof}

\begin{lemma}\label{lem:universal}
Let $\Fset$ contain exactly $D+1$ facets. If $Q[\Fset]$ is not positive semidefinite, then
$\Gamma[\Fset]$ has a universal vertex, that is, a facet adjacent to all the others in $\Fset$.
\end{lemma}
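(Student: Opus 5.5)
We argue by contraposition: suppose $\Gamma[\Fset]$ has no universal vertex, so $d_{\Fset}\le D-1$ since $|\Fset|=D+1$ means a vertex of internal degree $D$ is universal. Then Lemma~\ref{lem:gersh} applies directly and gives $Q[\Fset]\succeq0$. So the statement is an immediate consequence of the internal-degree reduction.

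Concretely, the steps are as follows. First, we observe that in an induced graph on $D+1$ vertices each vertex has at most $D$ neighbours. Equality for a vertex $F\in\Fset$ means $F$ is adjacent to all other members of $\Fset$, i.e.\ $F$ is universal in $\Gamma[\Fset]$. Second, if no such vertex exists, every internal degree is at most $D-1$, so $d_{\Fset}\le D-1$. Third, we invoke Lemma~\ref{lem:gersh}, which gives $\lambda_1(S[\Fset])\le D-1$ by Gershgorin and hence $Q[\Fset]=(D-1)I-S[\Fset]\succeq0$. Contraposing yields the claim.

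There is essentially no obstacle here. The only point requiring care is the counting identification: the maximum degree $D$ in an induced graph on exactly $D+1$ vertices is attained only by a universal vertex. The lemma is the first step in classifying bad $(D+1)$-collections. Presumably the paper will next use Lemmas~\ref{lem:triangle} and~\ref{lem:sign} to show that the universal vertex's neighbourhood splits into ridge stars and to relate it to a fan, but that goes beyond the present statement.
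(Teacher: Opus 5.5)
Your proof is correct and is essentially the paper's argument. You take the contrapositive and cite Lemma~\ref{lem:gersh}. The paper argues directly: it bounds $\lambda_1(S[\Fset])\le\lambda_1(A_{\Fset})\le d_{\Fset}$, concludes $d_{\Fset}\ge D$, and notes that on $D+1$ vertices this forces a universal vertex. Both rest on the same maximum-degree bound for $\lambda_1(S[\Fset])$.
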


\begin{proof}
The fact that $Q[\Fset]=(D-1)I-S[\Fset]$ is not positive semidefinite means that
$\lambda_1(S[\Fset])>D-1$. Let $A_{\Fset}$ be the unsigned adjacency matrix of
$\Gamma[\Fset]$. Since $A_{\Fset}=|S[\Fset]|$ entrywise,
\[
  D-1<\lambda_1(S[\Fset])
  \le\lambda_1(A_{\Fset})
  \le d_{\Fset},
\]
where the last inequality is the maximum-degree bound. Hence $d_{\Fset}\ge D$. Since
$\Gamma[\Fset]$ has $D+1$ vertices, its maximum degree is at most $D$. Therefore
$d_{\Fset}=D$, so one facet is adjacent to all the others.
\end{proof}

Let $F_*=\{u_0,\dots,u_k\}$ be universal in a collection $\Fset$ of $D+1$ facets. Put
$\rho_j=F_*\setminus\{u_j\}$ for $0\le j\le k$, and let $\Aset_j$ be the collection of facets in
$\Fset\setminus\{F_*\}$ that contain $\rho_j$; every facet of $\Fset\setminus\{F_*\}$ lies in
exactly one $\Aset_j$, namely the one indexed by its intersection with $F_*$. Write
$r_j=|\Aset_j|$. Then
\begin{equation}\label{eq:groupsizes}
  \sum_{j=0}^{k}r_j=D,\qquad r_j\le D-1,
\end{equation}
the second inequality because $\rho_j$ is contained in $F_*$ and in every member of $\Aset_j$, so
that $d(\rho_j)\ge r_j+1$, while $d(\rho_j)\le D$.

Every $U\in\Aset_j$ has a unique representation $U=\rho_j\cup\{x_U\}$ with $x_U\notin F_*$.
Switch so that every signed edge incident with $F_*$ is positive, which is possible by
Lemma~\ref{lem:switch} applied to the star of $\rho_j$ for each $j$ in turn, or directly by
setting $E_{U,U}=\eps(F_*,U)$ for $U\ne F_*$. Write $\eps_0$ for the signs before switching and
$\eps'$ for the signs afterward. Then:

\begin{itemize}[leftmargin=1.6em,itemsep=0.25em]
\item two facets in the same group $\Aset_j$ are joined by a \emph{positive} edge. Indeed, for
$U,U'\in\Aset_j$, the original signs satisfy
\[
  \eps_0(U,U')=\eps_0(F_*,U)\eps_0(F_*,U'),
\]
since the product on the right is
${[\rho_j:F_*]}^{2}[\rho_j:U][\rho_j:U']$. Therefore the switched sign is
\[
  \eps'(U,U')
  =\eps_0(U,U')\eps_0(F_*,U)\eps_0(F_*,U')=1;
\]
\item two facets in different groups $\Aset_j,\Aset_l$ are adjacent exactly when they have the
same outside vertex. Indeed for $U=\rho_j\cup\{x\}$ and $U'=\rho_l\cup\{x'\}$ with $j\ne l$ and
$x,x'\notin F_*$ one has $|U\cap U'|=|\rho_j\cap\rho_l|+[x=x']=(k-1)+[x=x']$. When $x=x'$, the
three facets $F_*,U,U'$ are facets of the abstract $(k+1)$-simplex $F_*\cup\{x\}$, so
Lemma~\ref{lem:triangle} and Lemma~\ref{lem:sign} give
$\eps_0(F_*,U)\eps_0(U,U')\eps_0(U',F_*)=-1$. Hence
\[
  \eps'(U,U')
  =\eps_0(U,U')\eps_0(F_*,U)\eps_0(F_*,U')=-1,
\]
so the switched edge is \emph{negative}.
\end{itemize}

\begin{lemma}\label{lem:quadform}
Assume $D\ge3$. If $r_j\le D-2$ for every $j$, then $Q[\Fset]\succeq0$.
\end{lemma}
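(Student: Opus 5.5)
The plan is to compute the quadratic form of $Q[\Fset]$ exactly in the switched coordinates described just before the statement, and then bound it below by Cauchy--Schwarz. Switching conjugates $Q[\Fset]$ by a diagonal sign matrix, so positive semidefiniteness is unaffected. In these coordinates the structure of $S[\Fset]$ is as follows:
\begin{itemize}[leftmargin=1.6em]
\item $F_*$ is joined by a positive edge to each of the other $D$ facets;
\item each group $\Aset_j$ is a positive clique, since all its members contain $\rho_j$;
\item the only other edges are negative edges between facets in different groups that have the same outside vertex $x$.
\end{itemize}
For an outside vertex $x$, let $X_x$ be the set of facets in $\Fset\setminus\{F_*\}$ with $x_U=x$. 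Since $U\mapsto x_U$ is injective within a group, $X_x$ meets each group at most once. Hence the cross edges are exactly all pairs inside each $X_x$, and the sets $X_x$ partition $\Fset\setminus\{F_*\}$.

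Write $z=(t,y)$, with $t$ the $F_*$-coordinate, and set
\[
  s_j=\sum_{U\in\Aset_j}y_U,\qquad s_x=\sum_{U\in X_x}y_U,\qquad S=\sum_j s_j=\sum_x s_x .
\]
Expanding $z^{\T}Q[\Fset]z=(D-1)|z|^2-z^{\T}S[\Fset]z$ by these three edge types gives the following identity. The positive cliques contribute $-\sum_j\bigl(s_j^2-|y_{\Aset_j}|^2\bigr)$, and the negative cliques contribute $+\sum_x\bigl(s_x^2-|y_{X_x}|^2\bigr)$. The two diagonal corrections cancel, because both families partition the same index set. The result is
\[
  z^{\T}Q[\Fset]z=(D-1)t^2-2tS+(D-1)|y|^2-\sum_j s_j^2+\sum_x s_x^2 .
\]
Now apply the hypothesis $r_j\le D-2$ through Cauchy--Schwarz: $s_j^2\le r_j|y_{\Aset_j}|^2\le(D-2)|y_{\Aset_j}|^2$. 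Summing over $j$ shows the form is at least
\[
  (D-1)t^2-2tS+|y|^2+\sum_x s_x^2 .
\]
This is a quadratic in $t$ with positive leading coefficient $D-1$. It is nonnegative for all $t$ provided
\[
  S^2\le (D-1)\Bigl(|y|^2+\sum_x s_x^2\Bigr).
\]

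The main step is this last inequality, and I would split it by the number $N$ of distinct outside vertices.
\begin{itemize}[leftmargin=1.6em]
\item If $N\le D-1$, then $S=\sum_x s_x$ and Cauchy--Schwarz give $S^2\le N\sum_x s_x^2\le(D-1)\sum_x s_x^2$, which suffices.
\item If $N=D$, every $X_x$ is a singleton, so there are no cross edges and $\sum_x s_x^2=|y|^2$. Cauchy--Schwarz over the $D$ coordinates gives $S^2\le D|y|^2\le 2(D-1)|y|^2$, which holds because $D\ge2$.
\end{itemize}
These two cases are exhaustive, which proves $Q[\Fset]\succeq0$.

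The obstacle I expect is the bookkeeping in the exact expansion. One must check that the diagonal corrections from the positive group cliques and the negative outside-vertex cliques cancel exactly. This relies on both families partitioning $\Fset\setminus\{F_*\}$, and on the sign facts recorded before the lemma. The hypothesis $D\ge3$ is stronger than what the final case analysis uses; it only needs $D-1>0$ and $D\le 2(D-1)$.
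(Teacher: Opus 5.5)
Your proposal is correct and follows the paper's proof essentially step for step. You use the same switched-coordinate expansion with cancelling diagonal corrections, the same Cauchy--Schwarz use of $r_j\le D-2$, and the same reduction to a condition in the $F_*$-coordinate. The only difference is in the last step: you split on the number $N$ of outside vertices, whereas the paper bounds $|y|^2$ and $\sum_x s_x^2$ each below by $S^2/D$ (using $N\le D$) and obtains the slack $\frac{D-2}{D(D-1)}S^2\ge 0$ in one line.
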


\begin{proof}
Let $y={(y_U)}_{U\in\Fset\setminus\{F_*\}}\in\R^{D}$ and put
\[
  s=\sum_U y_U,\qquad g_j=\sum_{U\in\Aset_j}y_U,\qquad h_x=\sum_{U:\,x_U=x}y_U .
\]
The diagonal contribution is $(D-1)(\alpha^2+\|y\|^2)$, and the positive edges from $F_*$
contribute $-2\alpha s$. The positive edges within the groups contribute
\[
  -2\sum_j\sum_{\{U,U'\}\subseteq\Aset_j}y_Uy_{U'}
  =-\sum_j g_j^2+\|y\|^2.
\]
The negative edges between different groups join precisely the pairs with the same outside
vertex, and therefore contribute
\[
  2\sum_x\sum_{\substack{\{U,U'\}:\,x_U=x_{U'}=x}}y_Uy_{U'}
  =\sum_x h_x^2-\|y\|^2.
\]
The groups $\Aset_j$ partition the coordinates of $y$, as do the classes determined by the
outside vertices. Thus the two occurrences of $\|y\|^2$ cancel, giving
\[
  \begin{pmatrix}\alpha\\ y\end{pmatrix}^{\T}Q[\Fset]\begin{pmatrix}\alpha\\ y\end{pmatrix}
  =(D-1)\alpha^2-2\alpha s+(D-1)\|y\|^2-\sum_{j}g_j^2+\sum_x h_x^2
\]
for every $\alpha\in\R$. Completing the square in $\alpha$ reduces the claim to
\[
  (D-1)\|y\|^2-\sum_j g_j^2+\sum_x h_x^2-\frac{s^2}{D-1}\ \ge\ 0 .
\]
By Cauchy--Schwarz and $r_j\le D-2$,
\[
  \sum_j g_j^2\le\sum_j r_j\sum_{U\in\Aset_j}y_U^2\le(D-2)\|y\|^2,
\]
so that $(D-1)\|y\|^2-\sum_j g_j^2\ge\|y\|^2$. There are $D$ coordinates in $y$, so
$\|y\|^2\ge s^2/D$; and there are at most $D$ distinct outside vertices with
$\sum_x h_x=s$, so another application of Cauchy--Schwarz gives $\sum_x h_x^2\ge s^2/D$.
Consequently
\[
  (D-1)\|y\|^2-\sum_j g_j^2+\sum_x h_x^2-\frac{s^2}{D-1}
  \ \ge\ \frac{s^2}{D}+\frac{s^2}{D}-\frac{s^2}{D-1}
  =\frac{D-2}{D(D-1)}\,s^2\ \ge\ 0 . \qedhere
\]
\end{proof}

\begin{theorem}[Classification of the first nontrivial minors]\label{thm:Dplus1}
Assume $D\ge2$ and let $\Fset$ be a connected collection of exactly $D+1$ facets. Then
$\Phi(\Fset)<0$ if and only if there are a ridge $\rho$ of degree $D$ and a facet
$U\notin\Fset_\rho$ such that $\Fset=\Fset_\rho\cup\{U\}$ and $U$ is adjacent to exactly one
member of $\Fset_\rho$.

Consequently, all connected principal minors of order $D+1$ are nonnegative if and only if every
ridge of degree $D$ is clean.
\end{theorem}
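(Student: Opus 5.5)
The "if" direction is immediate from Lemma~\ref{lem:borderedfan}: if $\Fset=\Fset_\rho\cup\{U\}$ with $U$ adjacent to exactly one fan member, the bordered fan determinant equals $-D^{D-2}<0$.

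For the converse, suppose $\Phi(\Fset)<0$. Then $Q[\Fset]$ is not positive semidefinite, so Lemma~\ref{lem:universal} supplies a universal facet $F_*$. We form the groups $\Aset_0,\dots,\Aset_k$ with sizes $r_j$ satisfying~\eqref{eq:groupsizes}, and switch so that all edges at $F_*$ are positive. Then edges inside a group are positive, and edges between groups are negative and occur exactly for pairs with a common outside vertex.

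We split on the largest group size.

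\emph{Case $r_j=D-1$ for some $j$.} Then $d(\rho_j)\ge D$, so $\rho_j$ has degree exactly $D$. Its fan is $\Aset_j\cup\{F_*\}$, since that set already has $D$ members. So $\Fset=\Fset_{\rho_j}\cup\{U\}$ for the unique remaining facet $U$, which lies in some other group $\Aset_l$. Lemma~\ref{lem:borderedfan} then says $\Phi(\Fset)<0$ forces $U$ to have exactly one fan neighbour. This is the claimed configuration, and in fact $U$'s neighbour count is $1$ plus the number of members of $\Aset_j$ sharing its outside vertex, i.e.\ $1$ or $2$.

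\emph{Case $r_j\le D-2$ for all $j$.} When $D\ge3$, Lemma~\ref{lem:quadform} gives $Q[\Fset]\succeq0$, contradicting $\Phi(\Fset)<0$. When $D=2$, the condition $r_j\le D-2=0$ contradicts $\sum_j r_j=D=2$, so this case is vacuous. (For $D=2$ we also have $r_j\le 1=D-1$, so any nonempty group already falls under the first case.)

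The last sentence of the theorem then follows. Any fan $\Fset_\rho\cup\{U\}$ is connected when $U$ has a fan neighbour, and a degree-$D$ ridge is unclean exactly when some such $U$ has exactly one fan neighbour. So a negative connected minor of order $D+1$ exists if and only if some degree-$D$ ridge is unclean, using Lemma~\ref{lem:borderedfan} in both directions.

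The main obstacle I expect is bookkeeping in the case $r_j=D-1$. One must verify that the fan of $\rho_j$ is exactly $\Aset_j\cup\{F_*\}$, which needs $d(\rho_j)\le D$ together with $|\Aset_j\cup\{F_*\}|=D$. One must also confirm that $U\notin\Fset_{\rho_j}$, which holds because $U\cap F_*=\rho_l\ne\rho_j$. The remaining steps rely on lemmas already proved.
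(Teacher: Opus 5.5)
Your proposal is correct and follows the paper's proof essentially step for step. You use Lemma~\ref{lem:borderedfan} for the forward direction; Lemma~\ref{lem:universal} with the group decomposition; Lemma~\ref{lem:quadform} (or the trivial count when $D=2$) to force a group of size $D-1$; and Lemma~\ref{lem:borderedfan} again to pin down exactly one fan neighbour, with your explicit check that $\Fset_\rho\cup\{U\}$ is connected being a small detail the paper leaves implicit.
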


\begin{proof}
If $U$ is adjacent to exactly one member of a maximum-degree fan, Lemma~\ref{lem:borderedfan} gives
$\Phi(\Fset_\rho\cup\{U\})=-D^{D-2}<0$.

Conversely, suppose $|\Fset|=D+1$ and $\Phi(\Fset)<0$. Every proper principal submatrix of
$Q[\Fset]$ has order at most $D$ and is therefore positive semidefinite by
Corollary~\ref{cor:small}; in particular $Q[\Fset]$ itself is not positive semidefinite, and
Lemma~\ref{lem:universal} provides a universal facet $F_*$. Adopt the groups $\Aset_j$ and the
sizes $r_j$ of~\eqref{eq:groupsizes}. If $D=2$, the relations $\sum_j r_j=2$ and $r_j\le1$ force
$r_j=1=D-1$ for some $j$. If $D\ge3$ and every $r_j\le D-2$, then Lemma~\ref{lem:quadform} would
give $Q[\Fset]\succeq0$, a contradiction. Thus in all cases some group has size $D-1$.

Say $r_0=D-1$. Then $F_*$ together with the members of $\Aset_0$ are $D$ distinct facets
containing $\rho_0$, so $d(\rho_0)=D$ and $\{F_*\}\cup\Aset_0=\Fset_{\rho_0}$ is a
maximum-degree fan. Exactly one facet $U$ of $\Fset$ remains. Since $F_*$ is universal, $U$ is
adjacent to $F_*$, and by Lemma~\ref{lem:atmosttwo} it is adjacent to at most two members of the
fan; so it has one or two fan neighbours. If it had two, Lemma~\ref{lem:borderedfan} would give
$\Phi(\Fset)=0$, contradicting $\Phi(\Fset)<0$. Hence $U$ has exactly one fan neighbour.

The final equivalence follows: if some ridge of degree $D$ is not clean, the first paragraph
produces a connected collection of order $D+1$ with negative determinant, and conversely a
connected collection of order $D+1$ with negative determinant produces, by the second paragraph,
a facet meeting a maximum-degree fan in exactly one member.
\end{proof}

\subsection{The refined characterization}

\begin{theorem}\label{thm:refined}
Let $K$ be a finite $k$-dimensional simplicial complex with $k\ge1$ and $K_k\ne\varnothing$, and
let $D=d_1(K)$. If $D=1$, then $\lambda_1(\Lup_{k-1}(K))=k+1=D+k$. Assume now that
$D\ge2$. Then
\[
  \lambda_1\bigl(\Lup_{k-1}(K)\bigr)=D+k
\]
if and only if the following two conditions hold.
\begin{enumerate}[label=\textup{(\roman*)},leftmargin=2.6em,itemsep=0.3em]
\item\label{it:ref-i} Every ridge of degree $D$ is clean.
\item\label{it:ref-ii} For every connected collection $\Fset\subseteq K_k$ satisfying
\[
  |\Fset|\ge D+2\quad\text{and}\quad d_{\Fset}\ge D ,
\]
one has
\[
  \sum_R {(-1)}^{m(R)+c(R)}\,2^{c(R)}\,\eps(R)\,{(D-1)}^{i(R)}\ \ge\ 0,
\]
where the sum ranges over all spanning elementary subgraphs $R$ of $H_{\Fset}$.
\end{enumerate}
\end{theorem}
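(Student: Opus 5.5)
The plan is to reduce everything to Theorem~\ref{thm:allminors}: equality holds if and only if $\Phi(\Fset)\ge0$ for every nonempty connected $\Fset\subseteq K_k$. We then sort connected collections by size and internal degree, and show that each class is either automatically harmless, governed by condition~\ref{it:ref-i}, or governed by condition~\ref{it:ref-ii}.

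The case $D=1$ is handled first. No two facets share a ridge, so $S(K)=0$, and \eqref{eq:lamS} gives $\lambda_1=k+1=D+k$ directly.

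Now assume $D\ge2$ and fix a connected collection $\Fset$. There are three cases.
\begin{enumerate}[label=\textup{(\alph*)},leftmargin=2.6em]
\item If $|\Fset|\le D$, Corollary~\ref{cor:small} gives $\Phi(\Fset)\ge0$.
\item If $|\Fset|=D+1$, Theorem~\ref{thm:Dplus1} says that all such minors are nonnegative exactly when every ridge of degree $D$ is clean. This is condition~\ref{it:ref-i}.
\item If $|\Fset|\ge D+2$, we split by internal degree. When $d_{\Fset}\le D-1$, Lemma~\ref{lem:gersh} gives $Q[\Fset]\succeq0$, hence $\Phi(\Fset)\ge0$. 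When $d_{\Fset}\ge D$, Proposition~\ref{prop:sachs} shows that $\Phi(\Fset)$ equals the signed Sachs sum displayed in condition~\ref{it:ref-ii}. So nonnegativity of these minors is literally condition~\ref{it:ref-ii}.
\end{enumerate}

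For the forward direction, assume $\lambda_1(\Lup_{k-1}(K))=D+k$. Proposition~\ref{prop:fanvector} (or equivalently Theorem~\ref{thm:Dplus1}) gives~\ref{it:ref-i}. Theorem~\ref{thm:allminors}, together with the Sachs identity, gives~\ref{it:ref-ii}.

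For the converse, assume~\ref{it:ref-i} and~\ref{it:ref-ii}. The three cases above show that $\Phi(\Fset)\ge0$ for every connected $\Fset$, and Theorem~\ref{thm:allminors} then yields equality.

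The only point needing care is the bookkeeping. The case split must be exhaustive over connected collections, and Theorem~\ref{thm:Dplus1} must be applied only to connected collections of order $D+1$; this matches its hypothesis. I do not expect a genuine obstacle: the substantive work was already done in Theorem~\ref{thm:Dplus1} and Lemma~\ref{lem:quadform}, and this theorem is an assembly of those results.
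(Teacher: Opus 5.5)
Your proposal is correct and follows essentially the same route as the paper's proof. Both reduce to nonnegativity of the connected principal minors of $Q$, and both dispose of the collections with $d_{\Fset}\le D-1$ (in particular those with $|\Fset|\le D$) by Gershgorin, the connected collections of order $D+1$ by cleanliness together with Theorem~\ref{thm:Dplus1}, and the rest by the Sachs identity of Proposition~\ref{prop:sachs}. The only differences are that you split by size before internal degree and cite Theorem~\ref{thm:allminors} for the disconnected reduction instead of repeating the factorization inline; neither affects the argument.
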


\begin{proof}
If $D=1$ then no two facets share a ridge, so $S=0$ and~\eqref{eq:lamS} gives
$\lambda_1(\Lup_{k-1})=k+1$. Assume $D\ge2$.

Suppose first that $\lambda_1(\Lup_{k-1}(K))=D+k$. Proposition~\ref{prop:reform} gives
$Q\succeq0$. Every principal submatrix of a positive semidefinite matrix is positive
semidefinite, so $\Phi(\Fset)=\det Q[\Fset]\ge0$ for every $\Fset\subseteq K_k$. For each
collection occurring in~\ref{it:ref-ii}, Proposition~\ref{prop:sachs} identifies this principal
minor with
\[
  \sum_R {(-1)}^{m(R)+c(R)}\,2^{c(R)}\,\eps(R)\,{(D-1)}^{i(R)},
\]
and hence the displayed inequality in~\ref{it:ref-ii} holds. Moreover,
every maximum-degree fan vector has Rayleigh quotient $D-1$, and $D-1$ is now the largest
eigenvalue of $S$, so the fan vector attains the maximum Rayleigh quotient and is an eigenvector;
Proposition~\ref{prop:fanvector} then gives~\ref{it:ref-i}.

Conversely, assume~\ref{it:ref-i} and~\ref{it:ref-ii}; we prove $Q\succeq0$, which for a real
symmetric matrix is equivalent to the nonnegativity of every principal minor. Let
$\Fset\subseteq K_k$ be arbitrary and nonempty.

If $\Gamma[\Fset]$ is disconnected, then as in the proof of Theorem~\ref{thm:allminors} the
determinant factors over the connected components, so it suffices to treat connected $\Fset$.

Let $\Fset$ be connected. If $d_{\Fset}\le D-1$, then Lemma~\ref{lem:gersh} gives the stronger
conclusion $Q[\Fset]\succeq0$, whatever the size of $\Fset$. Suppose then $d_{\Fset}\ge D$. This
already forces $|\Fset|\ge D+1$, since a graph on at most $D$ vertices has maximum degree at most
$D-1$. If $|\Fset|=D+1$, then by~\ref{it:ref-i} and Theorem~\ref{thm:Dplus1} we have
$\Phi(\Fset)\ge0$. The only remaining possibility is $|\Fset|\ge D+2$ together with
$d_{\Fset}\ge D$. For such a collection, condition~\ref{it:ref-ii} says that its signed Sachs
sum is nonnegative, while Proposition~\ref{prop:sachs} identifies this sum with
$\Phi(\Fset)$. Thus $\Phi(\Fset)\ge0$.

Every principal minor of $Q$ has now been shown to be nonnegative, so $Q\succeq0$ and
Proposition~\ref{prop:reform} gives $\lambda_1(\Lup_{k-1}(K))=D+k$.
\end{proof}

\begin{remark}\label{rem:improved}
Theorem~\ref{thm:refined} greatly reduces the determinant test of
Theorem~\ref{thm:allminors}. Gershgorin's theorem handles collections with
$d_{\Fset}\le D-1$, while cleanliness handles collections of exactly $D+1$ facets. Thus
determinants need only be checked when $|\Fset|\ge D+2$ and $d_{\Fset}\ge D$.
Example~\ref{ex:dim3} shows that this remaining condition is necessary.
\end{remark}

\subsection{The cases \texorpdfstring{$D=2$ and $D=3$}{D=2 and D=3}}

The signed-graph reformulation connects the equality problem to the theory of signed graphs with
bounded least eigenvalue. Indeed, by Proposition~\ref{prop:reform},
\begin{equation}\label{eq:lmin}
  \lambda_1\bigl(\Lup_{k-1}(K)\bigr)=D+k
  \quad\Longleftrightarrow\quad
  \lambda_1(S)\le D-1
  \quad\Longleftrightarrow\quad
  \lambda_{\min}(-S)\ge-(D-1).
\end{equation}
The right-hand property is closed under taking induced signed subgraphs, that is under passing to
subcollections of facets, and depends only on the switching class of $\Sigma(K)$. Both features
are invisible in the determinant formulation, and both are exploited below.

\begin{theorem}\label{thm:D2}
Let $K$ be a finite $k$-dimensional simplicial complex, not assumed pure, with $D=2$. The
following are equivalent.
\begin{enumerate}[label=\textup{(\alph*)},leftmargin=2.6em]
\item\label{it:D2a} $\lambda_1(\Lup_{k-1}(K))=k+2$;
\item\label{it:D2b} every connected component of $\Gamma(K)$ is a complete graph;
\item\label{it:D2c} $K_k$ can be partitioned into classes, no ridge being shared between facets
of different classes, such that each class is either a singleton or consists of $m$ facets, with
$2\le m\le k+2$, of one abstract $(k+1)$-simplex.
\end{enumerate}
\end{theorem}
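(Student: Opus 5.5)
The plan is to prove the cycle \ref{it:D2a}$\Rightarrow$\ref{it:D2b}$\Rightarrow$\ref{it:D2c}$\Rightarrow$\ref{it:D2b}$\Rightarrow$\ref{it:D2a}, working throughout with the signed reformulation~\eqref{eq:lmin}. Since $D=2$, condition~\ref{it:D2a} is equivalent to $\lambda_1(S)\le 1$. One observation drives everything: no three facets share a common ridge, since such a ridge would have upper degree at least $3>D$. Hence Lemma~\ref{lem:triangle} puts every triangle of $\Gamma(K)$ among the facets of a single abstract $(k+1)$-simplex. Lemma~\ref{lem:sign} then makes every triangle of $\Sigma(K)$ negative.

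\emph{\ref{it:D2a}$\Rightarrow$\ref{it:D2b}.} Suppose some connected component of $\Gamma(K)$ is not complete.
\begin{enumerate}[label=(\arabic*),leftmargin=2.2em]
\item A connected graph that is not complete contains an induced path on three vertices, $F\sim G\sim H$ with $F\not\sim H$.
\item This path is a tree, so a diagonal switching makes both edges positive.
\item The principal submatrix $S[\{F,G,H\}]$ is therefore similar to the adjacency matrix of $P_3$, whose largest eigenvalue is $\sqrt2$.
\item Lemma~\ref{lem:compress}\ref{it:comp-b} gives $\lambda_1(S)\ge\sqrt2>1$, contradicting~\ref{it:D2a}.
\end{enumerate}

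\emph{\ref{it:D2b}$\Rightarrow$\ref{it:D2a}.} Take the classes to be the connected components of $\Gamma(K)$; then $S$ is block diagonal, so it suffices to treat one complete component $\{F_1,\dots,F_m\}$. If $m\le2$, the block has eigenvalues in $\{0,\pm1\}$. If $m\ge3$, every triangle is negative by the observation above. Switch so that every edge at $F_1$ is negative. Then for $u,v\ne F_1$ the triangle $F_1,u,v$ forces $\eps'(u,v)=-1$, because $(-1)(-1)\eps'(u,v)=-1$. So the block is switching equivalent to $I-J$, whose eigenvalues are $1$ and $1-m$. Hence $\lambda_1(S)\le1$.

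\emph{\ref{it:D2b}$\Leftrightarrow$\ref{it:D2c}.} For \ref{it:D2b}$\Rightarrow$\ref{it:D2c}, again take the classes to be the components; no ridge is shared across classes by construction. For a component of size $m\ge2$, pick adjacent $A,B$ and set $W=A\cup B$, a $(k+2)$-element set. If $m=2$, both facets lie in $W$. If $m\ge3$, every further facet $C$ is adjacent to both $A$ and $B$. By the proof of Lemma~\ref{lem:triangle}, and since a common ridge is excluded, $C$ is one of the $(k+1)$-subsets of $W$. So all $m$ facets are facets of the simplex $W$, and $m\le k+2$. For \ref{it:D2c}$\Rightarrow$\ref{it:D2b}, any two facets of one abstract $(k+1)$-simplex meet in $k$ vertices, so each class is a clique of $\Gamma(K)$. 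Since no ridge is shared between classes, the classes are exactly the connected components, and each is complete.

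The only step needing genuine care is the switching argument in \ref{it:D2b}$\Rightarrow$\ref{it:D2a}. It must combine the exclusion of common ridges, which uses $D=2$, with Lemmas~\ref{lem:triangle} and~\ref{lem:sign} to show that every triangle is negative, and then observe that a complete signed graph with all triangles negative is switching equivalent to $-(J-I)$. Everything else is interlacing and elementary set counting.
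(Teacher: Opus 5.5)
Your proof is correct. For (b)$\Rightarrow$(a) and (b)$\Leftrightarrow$(c) it follows the paper's argument. The paper also uses $D=2$ with Lemmas~\ref{lem:triangle} and~\ref{lem:sign} to make every triangle negative, then sets $e_F=-S_{F_0,F}$ to write $S[C]=-E(J-I)E$; this is your switching to $I-J$. It likewise gets $C\subseteq A\cup B$ from Lemma~\ref{lem:triangle}.

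The genuine difference is in (a)$\Rightarrow$(b). The paper writes $I-S\succeq0$ as the Gram matrix of unit vectors $u_F$, with $\langle u_F,u_G\rangle=-\eps(F,G)$ on adjacent pairs and $0$ otherwise. Equality in Cauchy--Schwarz then forces $u_G=\pm u_F$ exactly for adjacent facets. So adjacency is the equivalence relation ``$u_F=\pm u_G$'', and every component is a clique. You instead exhibit an explicit obstruction: an induced $P_3$, whose signed adjacency matrix is switching equivalent to that of $P_3$ with largest eigenvalue $\sqrt2$, combined with Cauchy interlacing.

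Both arguments are short and neither really uses $D=2$ beyond translating (a) into $\lambda_1(S)\le1$. The paper's representation argument gives transitivity of adjacency directly, in the spirit of the root-system discussion for $D=3$. Yours matches the forbidden-induced-subgraph viewpoint of~\eqref{eq:lmin}. It also yields a quantitative bonus: whenever some component of $\Gamma(K)$ is not complete, $\lambda_1(\Lup_{k-1}(K))\ge k+1+\sqrt2$, which is strictly above $D+k$ when $D=2$.
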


\begin{proof}
Assume~\ref{it:D2a}. Proposition~\ref{prop:reform} gives
$Q=I-S\succeq0$, so $Q$ is the Gram matrix of vectors ${(u_F)}_{F\in K_k}$ with $\|u_F\|^2=1$ and,
for $F\ne G$, with $\langle u_F,u_G\rangle=-\eps(F,G)$ when $F$ and $G$ are adjacent and
$\langle u_F,u_G\rangle=0$ otherwise. If $F$ and $G$ are adjacent then
$|\langle u_F,u_G\rangle|=1=\|u_F\|\|u_G\|$, so equality holds in the Cauchy--Schwarz inequality
and $u_G=\pm u_F$. Conversely, if $u_G=\pm u_F$ with $F\ne G$, their inner product is nonzero, so
$F$ and $G$ are adjacent. Adjacency therefore coincides with the equivalence relation
``$u_F=\pm u_G$'', and every component of $\Gamma(K)$ is complete.

Conversely, assume~\ref{it:D2b}, and let $C$ be a component, necessarily a clique. If $|C|=1$
then $Q[C]=[1]\succeq0$. Suppose $|C|\ge2$. Since $D=2$, no ridge lies in three facets, so three
pairwise adjacent facets cannot share a common ridge; by Lemma~\ref{lem:triangle} every triangle
of $\Gamma[C]$ therefore consists of three facets of one abstract $(k+1)$-simplex, and
Lemma~\ref{lem:sign} gives it the sign $-1$. Thus every triangle of the signed complete graph
$-S[C]$ is positive. Fix $F_0\in C$, set $e_{F_0}=1$, and for $F\ne F_0$ set
$e_F=-S_{F_0,F}$. If $F,G\ne F_0$ are distinct, the positivity of the triangle
$F_0FGF_0$ gives $-S_{F,G}=e_Fe_G$; the same identity holds when one of $F,G$ equals $F_0$
by the definition of the $e_F$. Hence, for $E=\operatorname{diag}(e_F:F\in C)$,
\[
  S[C]=-E(J-I)E .
\]
Consequently
\[
  Q[C]=I-S[C]=I+E(J-I)E=E\,J\,E\ \succeq\ 0 ,
\]
the middle equality because $EIE=I$. Taking the direct sum over components gives $Q\succeq0$,
and Proposition~\ref{prop:reform} gives~\ref{it:D2a}.

It remains to prove the equivalence of~\ref{it:D2b} and~\ref{it:D2c}. Let $C$ be a complete
component with $|C|\ge2$ and
choose distinct $F_1,F_2\in C$; their union $\Omega=F_1\cup F_2$ has $k+2$ vertices. If
$|C|\ge3$ and $F\in C\setminus\{F_1,F_2\}$, then $F_1,F_2,F$ are pairwise adjacent and cannot
share one ridge because $D=2$, so Lemma~\ref{lem:triangle} gives $F_1\cup F_2\cup F=\Omega$ and
hence $F\subseteq\Omega$. Thus every member of $C$ is a facet of the single abstract
$(k+1)$-simplex $\Omega$, and $|C|\le k+2$. Conversely, any two facets of a $(k+1)$-simplex meet
in $k$ vertices, so a class as in~\ref{it:D2c} induces a clique in $\Gamma(K)$, and the
requirement that facets in different classes share no ridge makes each class a component.
\end{proof}

\begin{remark}
For $k=1$,
Theorem~\ref{thm:D2} says that a graph $G$ with $\Delta(G)=2$ has Laplacian spectral radius three
if and only if every component containing an edge is isomorphic to $K_2$, $P_3$, or $K_3$.
These are precisely the connected graphs of maximum degree at most two that possess a dominating
vertex, in agreement with Proposition~\ref{prop:dominating} below.
\end{remark}

For $D=3$, condition~\eqref{eq:lmin} reads $\lambda_{\min}(-S)\ge-2$, which is the hypothesis of
the root-system theory of Cameron, Goethals, Seidel and Shult~\cite{CGSS}: a signed graph has
least eigenvalue at least $-2$ if and only if it is representable in the root system $D_n$ for
some $n$, or in $E_8$. Since the property is closed under induced signed subgraphs, it is
described by its minimal forbidden induced signed subgraphs. Vijayakumar~\cite{Vijayakumar}
proved that every signed graph with least eigenvalue below $-2$ contains an induced signed
subgraph on at most ten vertices whose least eigenvalue is below $-2$. Equivalently, every
minimal forbidden induced signed graph for the condition $\lambda_{\min}\ge-2$ has at most ten
vertices. This bounds the remaining test.

\begin{corollary}\label{cor:D3}
Assume $D=3$. Then $\lambda_1(\Lup_{k-1}(K))=k+3$ if and only if every ridge of degree three
is clean and
\[
  \Phi(\Fset)\ge0\qquad\text{for every connected }\Fset\subseteq K_k
  \text{ with } 5\le|\Fset|\le10 \text{ and } d_{\Fset}\ge3 .
\]
\end{corollary}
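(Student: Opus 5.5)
The proof combines three earlier ingredients: the reformulation \eqref{eq:lmin}, the reductions in the proof of Theorem~\ref{thm:refined}, and Vijayakumar's bound of ten vertices for minimal forbidden induced signed subgraphs.

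Necessity is immediate. If $\lambda_1(\Lup_{k-1}(K))=k+3$, then Proposition~\ref{prop:reform} gives $Q\succeq0$. Every principal minor of $Q$ is then nonnegative, so the displayed condition holds. Proposition~\ref{prop:fanvector} gives cleanliness of every ridge of degree three.

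For sufficiency, assume every ridge of degree three is clean and that the minor condition holds. Suppose for contradiction that $\lambda_1(S)>2$, that is, $\lambda_{\min}(-S)<-2$. The signed graph $-\Sigma(K)$ then has least eigenvalue below $-2$. By Vijayakumar's theorem it contains an induced signed subgraph on a set $\Fset$ of at most ten facets with $\lambda_{\min}(-S[\Fset])<-2$. Choose such an $\Fset$ of minimal cardinality, so that $\lambda_1(S[\Fset])>2$.

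The next step extracts a negative determinant from minimality. Every proper induced subcollection $\Fset'$ has $\lambda_1(S[\Fset'])\le 2$, so $Q[\Fset']\succeq0$. By Cauchy interlacing (Lemma~\ref{lem:compress}\ref{it:comp-b}) applied to $Q[\Fset]=2I-S[\Fset]$, the matrix $Q[\Fset]$ has at most one negative eigenvalue, and it has at least one. Suppose $Q[\Fset]$ had a zero eigenvalue as well. A singular matrix with exactly one negative eigenvalue has a principal submatrix of order $|\Fset|-1$ that is not positive semidefinite: take a vector $v$ in the span of the negative and null eigenvectors with $v_F=0$ for some coordinate $F$. This would contradict minimality. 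Hence $\Phi(\Fset)<0$.

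It remains to check that this $\Fset$ falls within the tested range. Since $\Phi$ factors over components (proof of Theorem~\ref{thm:allminors}), a negative $\Phi(\Fset)$ forces some component to have negative $\Phi$, and that component is a smaller violating collection unless $\Fset$ is itself connected; so $\Fset$ is connected. Lemma~\ref{lem:gersh} gives $d_{\Fset}\ge3$, and Corollary~\ref{cor:small} gives $|\Fset|\ge4$. If $|\Fset|=4=D+1$, Theorem~\ref{thm:Dplus1} together with cleanliness gives $\Phi(\Fset)\ge0$, a contradiction. Thus $5\le|\Fset|\le10$ and $d_{\Fset}\ge3$, so the hypothesis gives $\Phi(\Fset)\ge0$, again a contradiction.

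The main obstacle is the minimality step: passing from ``least eigenvalue below $-2$'' to ``negative determinant''. The interlacing argument above is the route; everything else is bookkeeping with Theorem~\ref{thm:refined}'s reductions. I would also verify that Vijayakumar's result applies to arbitrary signed graphs, which covers $-\Sigma(K)$, rather than only to graphs up to switching.
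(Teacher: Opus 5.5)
Your proof is correct and follows the paper's argument. Both take a minimal violating collection supplied by Vijayakumar's theorem, use Corollary~\ref{cor:small}, Theorem~\ref{thm:Dplus1} with cleanliness, and Lemma~\ref{lem:gersh} to force $5\le|\Fset|\le10$ and $d_{\Fset}\ge3$, and then get a contradiction from $\Phi(\Fset)<0$.

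The only difference is how you reach $\Phi(\Fset)<0$. The paper reads it off the principal-minor criterion: every proper principal minor is nonnegative but $Q[\Fset]\not\succeq0$. That makes your interlacing detour unnecessary. If you keep the detour, choose $F$ with a nonzero coordinate of the null eigenvector, so that $v$ has a component along the negative eigenvector and $v^{\T}Q[\Fset]v<0$.
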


\begin{proof}
Necessity is immediate from Theorem~\ref{thm:refined}. Conversely, suppose the stated conditions
hold but equality fails. Then by~\eqref{eq:lmin} the signed graph $-\Sigma(K)$ has least
eigenvalue below $-2$. Choose an induced signed subgraph, on a facet collection $\Fset$, that is
minimal with this property. It is connected, since otherwise one of its components would already
have least eigenvalue below $-2$, and it has at most ten vertices by Vijayakumar's theorem.
Collections with $|\Fset|\le3$ are harmless by Corollary~\ref{cor:small}; collections with
$|\Fset|=4$ are harmless by cleanliness and Theorem~\ref{thm:Dplus1}; and collections with
$d_{\Fset}\le2$ are harmless by Lemma~\ref{lem:gersh}. Hence $\Fset$ satisfies
$5\le|\Fset|\le10$ and $d_{\Fset}\ge3$, so the hypothesis gives $\Phi(\Fset)\ge0$. On the other
hand, minimality makes every proper principal submatrix of $Q[\Fset]$ positive semidefinite,
whereas $Q[\Fset]$ itself is not positive semidefinite; the principal-minor criterion therefore
forces $\Phi(\Fset)=\det Q[\Fset]<0$, a contradiction.
\end{proof}

\subsection{Examples and structural consequences}

Let $k=1$ and let $G$ be a graph, so that the ridges are the vertices, the facets are the edges,
and $\Lup_0(G)=L(G)$.

\begin{proposition}\label{prop:dominating}
Let $v$ be a vertex of maximum degree $D$ in a connected graph $G$. Then $v$ is clean if and only
if $v$ is dominating. Consequently $\lambda_1(L(G))=D+1$ if and only if $G$ has a dominating
vertex.
\end{proposition}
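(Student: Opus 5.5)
For $k=1$ the ridges are vertices, the facets are edges, and the fan $\Fset_v$ of a vertex $v$ of degree $D$ is the set of edges $vv_1,\dots,vv_D$. The plan is to prove the first assertion directly from Definition~\ref{def:clean}, and then deduce the spectral statement from Proposition~\ref{prop:fanvector} for necessity and from Theorem~\ref{thm:refined}, or more simply Proposition~\ref{prop:AM}, for sufficiency.

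For the first assertion, let $U=xy$ be an edge not containing $v$. By Lemma~\ref{lem:atmosttwo}, $U$ is adjacent to the fan edges $vx$ and $vy$ when these exist, and to no others, so its number of fan neighbours is $[x\sim v]+[y\sim v]$. Cleanliness therefore says that, for every edge $xy$ avoiding $v$, either both endpoints are neighbours of $v$ or neither is.

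If $v$ is dominating, every vertex other than $v$ is a neighbour of $v$, so the count is always two and $v$ is clean. Conversely, suppose $v$ is clean but not dominating. Consider the set $N[v]=\{v\}\cup N(v)$. By connectivity some edge joins $N[v]$ to its complement. Such an edge cannot contain $v$, since every vertex adjacent to $v$ lies in $N(v)$. So it has the form $xy$ with $x\in N(v)$ and $y\notin N[v]$, and it has exactly one fan neighbour. This contradicts cleanliness.

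For the consequence, suppose first that $\lambda_1(L(G))=D+1$. Proposition~\ref{prop:fanvector} makes every maximum-degree vertex clean, and hence dominating by the first part. Conversely, suppose $v$ is dominating, so $D=n-1$. The bound from Proposition~\ref{prop:AM} gives $\lambda_1\le\max_{xy}(d(x)+d(y))$, which is too weak here. Instead, $L(G)\preceq L(K_n)$ by Lemma~\ref{lem:deletion}, because deleting edges decreases every eigenvalue. Since $\lambda_1(L(K_n))=n=D+1$, combining this with the lower bound in Proposition~\ref{prop:reform} gives equality.

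The only delicate point is the edge $xy$ with $x\in N(v)$ and $y\notin N[v]$, which could be missed if one forgets to rule out fan edges. That is exactly where connectivity is used.
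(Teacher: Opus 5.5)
Your proof is correct and follows the paper's argument: your edge leaving $N[v]$ plays the role of the paper's shortest path $v,x,y$, and you get necessity of a dominating vertex from Proposition~\ref{prop:fanvector} exactly as the paper does. The only variation is in sufficiency, where you obtain $\lambda_1(L(G))\le\lambda_1(L(K_n))=n$ from Lemma~\ref{lem:deletion} and close with the lower bound of Proposition~\ref{prop:reform}, while the paper shows directly that $n$ is an eigenvalue because the complement of $G$ is disconnected; both work, yours stays within the paper's own lemmas, and your early mention of Proposition~\ref{prop:AM} (which, as you note, is too weak here) can simply be dropped.
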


\begin{proof}
The fan of $v$ is the set of edges incident with $v$. If $v$ is dominating, every edge $xy$
outside the fan has both endpoints in $N(v)$, so it is adjacent in $\Gamma(G)$ to exactly the two
fan edges $vx$ and $vy$; hence $v$ is clean.

Conversely, suppose $v$ is not dominating. Choose a shortest path from $v$ to a vertex outside
the closed neighbourhood of $v$; its first three vertices are $v,x,y$ with $vx,xy\in E(G)$ and
$vy\notin E(G)$. The edge $xy$ lies outside the fan and is adjacent to exactly one fan edge,
namely $vx$. Hence $v$ is not clean.

If $\lambda_1(L(G))=D+1$, Proposition~\ref{prop:fanvector} makes every maximum-degree vertex
clean, hence dominating. Conversely, if $v$ is dominating then $D=|V(G)|-1$; the complement of
$G$ has an isolated vertex and is therefore disconnected, so $|V(G)|$ is a Laplacian eigenvalue
of $G$, while every graph of order $n$ satisfies $\lambda_1(L(G))\le n$. Hence
$\lambda_1(L(G))=|V(G)|=D+1$.
\end{proof}

Connectedness is essential for the equivalence between cleanliness and domination: in a
disconnected graph an edge lying in another component meets zero edges of the fan, so it does not
violate cleanliness.

\begin{example}\label{ex:dim3}
Consider the three-dimensional complex generated by
\[
  0123,\ 0125,\ 0136,\ 0235,\ 0245,\ 0345,\ 1356,\ 2346,\ 3456 .
\]
Its maximum upper degree is $D=3$, and the unique ridge of degree three is $025$, whose fan is
$\{0125,0235,0245\}$. Every outside tetrahedron meets this fan in zero or two members, so the
ridge is clean and condition~\ref{it:ref-i} of Theorem~\ref{thm:refined} holds. Nevertheless
$\lambda_1(S)=2.106477882\ldots>2$, and hence
\[
  \lambda_1\bigl(\Lup_2(K)\bigr)=6.106477882\ldots\ >\ 6=D+k .
\]
The obstruction can be seen on the connected collection
\[
  \Fset=\{0123,0136,0235,0345,1356,2346,3456\}.
\]
For this collection,
\[
  Q[\Fset]=
  \begin{pmatrix}
    2&1&-1&0&0&0&0\\
    1&2&0&0&-1&0&0\\
    -1&0&2&1&0&0&0\\
    0&0&1&2&0&0&1\\
    0&-1&0&0&2&0&1\\
    0&0&0&0&0&2&-1\\
    0&0&0&1&1&-1&2
  \end{pmatrix},
  \qquad \det Q[\Fset]=-6 .
\]
Moreover $d_{\Fset}=3=D$, so this collection is not excluded by
Lemma~\ref{lem:gersh}. Thus condition~\ref{it:ref-ii} of Theorem~\ref{thm:refined} cannot be
omitted.
\end{example}

A signed graph is \emph{balanced} if every cycle has positive sign product, equivalently if it is
switching-equivalent to the ordinary adjacency matrix of its underlying graph~\cite{Zaslavsky}.

\begin{theorem}\label{thm:balanced}
Assume $\Gamma(K)$ is connected and $\Sigma(K)$ is balanced. Then
\[
  \lambda_1\bigl(\Lup_{k-1}(K)\bigr)=D+k
\]
if and only if all $k$-faces of $K$ contain one common ridge.
\end{theorem}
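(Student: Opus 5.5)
The plan is to use balance to replace the signed matrix $S(K)$ by the ordinary adjacency matrix of $\Gamma(K)$, and then to apply Perron--Frobenius monotonicity to a clique. Since $\Sigma(K)$ is balanced, it is switching-equivalent to its underlying unsigned graph~\cite{Zaslavsky}. Concretely, there is a diagonal $\pm1$ matrix $E$ with $ESE=A$, where $A$ is the $0/1$ adjacency matrix of $\Gamma(K)$. Switching is an orthogonal similarity, so $\lambda_1(S)=\lambda_1(A)$. By Proposition~\ref{prop:reform}, the equality $\lambda_1(\Lup_{k-1}(K))=D+k$ is therefore equivalent to $\lambda_1(A)=D-1$, and in general $\lambda_1(A)\ge D-1$.

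For sufficiency, assume every $k$-face contains a common ridge $\rho$. Any two distinct facets containing $\rho$ meet exactly in $\rho$, by the first paragraph of the proof of Lemma~\ref{lem:switch}. Hence $\Gamma(K)$ is the complete graph on the $d(\rho)$ facets. Every ridge other than $\rho$ lies in at most one facet: two facets through $\rho$ meet only in $\rho$, so they share no other ridge. Thus $D=d(\rho)=|K_k|$, and $\lambda_1(A)=\lambda_1(J-I)=D-1$. This direction does not even need balance: the fan vector $x_\rho$ is a nonzero eigenvector of $S=S[\Fset_\rho]$ with eigenvalue $D-1$, and the Rayleigh-quotient bound then gives $\lambda_1(S)=D-1$.

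For necessity, assume $\lambda_1(A)=D-1$. If $D=1$, no two facets are adjacent, and connectedness of $\Gamma(K)$ forces $|K_k|=1$, so the claim is trivial. For $D\ge2$, let $\rho$ be a ridge of degree $D$. Its fan $\Fset_\rho$ induces a clique $K_D$ in $\Gamma(K)$.

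The key step is the strict Perron--Frobenius monotonicity: for a connected graph, the spectral radius of any proper subgraph is strictly smaller than that of the whole graph. Suppose $K_k\ne\Fset_\rho$. Since $\Gamma(K)$ is connected, the clique $K_D$ is a proper subgraph of the connected graph $\Gamma(K)$, giving $\lambda_1(A)>\lambda_1(K_D)=D-1$, which is a contradiction. Hence $K_k=\Fset_\rho$, so every facet contains $\rho$.

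To justify the strict inequality directly, take a Perron vector $z>0$ of $A$ and the vector $w$ equal to $\ones$ on $\Fset_\rho$ and to $0$ elsewhere. Comparing $z^{\T}A w$ computed in two ways, using an edge leaving $\Fset_\rho$ (which exists by connectedness), gives $\lambda_1(A)\,z^{\T}w>(D-1)\,z^{\T}w$.

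The only delicate point I expect is the switching step: balance must be used to pass from $S$ to the nonnegative matrix $A$, because Perron--Frobenius monotonicity fails for signed graphs. Example~\ref{ex:dim3} shows the unbalanced case genuinely differs. The rest is routine.
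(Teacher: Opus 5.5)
Your proof is correct and follows the paper's argument. You use balance to switch $S$ to the unsigned adjacency matrix, strict Perron--Frobenius monotonicity against the clique formed by a maximum-degree fan for necessity, and $\Gamma(K)=K_D$ for sufficiency; your Perron-vector computation and separate $D=1$ case fill in details the paper leaves implicit. One small point concerns your aside that sufficiency needs no balance: an eigenvector with eigenvalue $D-1$ plus the Rayleigh lower bound does not give $\lambda_1(S)\le D-1$, so you need $S=x_\rho x_\rho^{\T}-I$ (or Gershgorin) there, though this does not affect the main argument.
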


\begin{proof}
Since $\Sigma(K)$ is balanced, $S$ is switching-equivalent to $A(\Gamma(K))$ and hence has the
same eigenvalues. By Proposition~\ref{prop:reform}, the equality
$\lambda_1(\Lup_{k-1}(K))=D+k$ holds exactly when
$\lambda_1(A(\Gamma(K)))=D-1$. We now characterize when this occurs.

Suppose first that $\lambda_1(\Lup_{k-1}(K))=D+k$. Choose a ridge of degree $D$.
The $D$ facets containing it form a clique $K_D$ in $\Gamma(K)$. If there were any facet outside
this clique, then, since $\Gamma(K)$ is connected, strict Perron--Frobenius monotonicity would
give
\[
  \lambda_1(A(\Gamma(K)))>\lambda_1(A(K_D))=D-1,
\]
contrary to Proposition~\ref{prop:reform}. Hence these $D$ facets are all the facets of $K$, so
they contain a common ridge.

Conversely, suppose that all facets contain a common ridge. This ridge has degree $D$, and
$\Gamma(K)=K_D$. Thus $S$ is switching-equivalent to $A(K_D)$ and
$\lambda_1(S)=D-1$. Proposition~\ref{prop:reform} now gives
$\lambda_1(\Lup_{k-1}(K))=D+k$.
\end{proof}

\begin{remark}
For $k=1$, balancedness of $\Sigma(K)$ is equivalent to bipartiteness of the graph $G=K$.
Indeed, orienting every edge from one bipartition class to the other makes all incidence signs
at a fixed vertex equal; conversely, such an orientation determines a bipartition according to
the common incidence sign at each vertex. Moreover, all edges contain one common vertex exactly
when a connected bipartite graph is a star, or equivalently when it has a dominating vertex.
Thus Theorem~\ref{thm:balanced} recovers the graph characterization
$\lambda_1(L(G))=\Delta(G)+1$ if and only if $G$ is a star.
\end{remark}

\begin{definition}[Disorientable complex]\label{def:disorientable}
Following~\cite{EidiMukherjee}, we call $K$ \emph{disorientable in dimension $k$} if the $k$-simplices can
be oriented so that any two adjacent $k$-simplices induce the same incidence sign on their common
ridge. With the present sign convention this means that after switching every edge of $\Sigma(K)$
has sign $+1$, that is, $\Sigma(K)$ is balanced.
\end{definition}

\begin{corollary}\label{cor:disorientable}
Suppose $\Gamma(K)$ is connected and $K$ is disorientable in dimension $k$. Then
$\lambda_1(\Lup_{k-1}(K))=D+k$ if and only if all $k$-faces of $K$ contain one common ridge.
\end{corollary}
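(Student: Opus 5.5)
The corollary should follow at once from Theorem~\ref{thm:balanced}, once disorientability in dimension $k$ is identified with balance of $\Sigma(K)$. So the plan is to make that identification explicit and then apply the theorem verbatim.

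First, suppose $K$ is disorientable in dimension $k$. Then there is an orientation of the $k$-simplices, which by Lemma~\ref{lem:switch} amounts to a diagonal sign matrix $E$, such that any two adjacent facets induce the same incidence sign on their common ridge. In these orientations every incidence product $\eps'(F,G)=[R:F]'[R:G]'$ equals $+1$, so $ESE=A(\Gamma(K))$.

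Next, observe that reorienting facets multiplies the sign of each edge $FG$ by $E_{F,F}E_{G,G}$. Around any cycle each vertex occurs in exactly two edges, so the sign product of the cycle is unchanged. Every cycle of $\Sigma(K)$ therefore has sign product $+1$, and $\Sigma(K)$ is balanced. Conversely, balance gives switching equivalence to $A(\Gamma(K))$ by the cited characterization~\cite{Zaslavsky}, so the two notions coincide, as recorded in Definition~\ref{def:disorientable}. Only the forward direction is actually needed here.

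With $\Gamma(K)$ connected by hypothesis and $\Sigma(K)$ balanced, Theorem~\ref{thm:balanced} applies directly. It gives $\lambda_1(\Lup_{k-1}(K))=D+k$ if and only if all $k$-faces share a common ridge.

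There is no real obstacle. The only point needing care is that "same incidence sign" in the sense of \cite{EidiMukherjee} matches the present convention $\eps=[R:F][R:G]=+1$. If their convention were instead opposite signs, one would pass to $-S$, and the argument would need adjusting. The definition above fixes the convention so that the edges are positive.
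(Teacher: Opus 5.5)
Your proposal is correct and matches the paper's route. The corollary is immediate from Theorem~\ref{thm:balanced} once Definition~\ref{def:disorientable} identifies disorientability in dimension $k$ with balance of $\Sigma(K)$; your check that reorientation multiplies each edge sign by $E_{F,F}E_{G,G}$, and hence preserves every cycle sign, simply makes the forward half of that identification explicit.
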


\begin{corollary}\label{cor:negcycle}
If $\lambda_1(\Lup_{k-1}(K))=D+k$, the graph $\Gamma(K)$ is connected, and $|K_k|>D$, then
$\Sigma(K)$ contains a negative cycle.
\end{corollary}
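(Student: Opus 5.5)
The plan is to argue by contraposition from Theorem~\ref{thm:balanced}. We assume $\lambda_1(\Lup_{k-1}(K))=D+k$, that $\Gamma(K)$ is connected and that $|K_k|>D$, and we suppose for contradiction that $\Sigma(K)$ has no negative cycle. By the definition recalled just before Theorem~\ref{thm:balanced}, this means $\Sigma(K)$ is balanced. The hypotheses of Theorem~\ref{thm:balanced} then hold. That theorem shows the equality forces all $k$-faces of $K$ to contain one common ridge $\rho$.

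The second step is to turn this into a count. Every facet contains $\rho$, so every facet lies in the fan of $\rho$. Hence $d(\rho)=|K_k|$. On the other hand $d(\rho)\le D=d_1(K)$ by the definition of $D$. Together these give $|K_k|\le D$, which contradicts $|K_k|>D$. So $\Sigma(K)$ is unbalanced, that is, it contains a cycle with negative sign product.

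There is no real obstacle here. The only point to check is that ``balanced'' in Theorem~\ref{thm:balanced} means precisely ``every cycle has positive sign product.'' The paper's definition states this, so the negation is exactly the existence of a negative cycle.

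For a more self-contained alternative, one could bypass Theorem~\ref{thm:balanced}. If $\Sigma(K)$ were balanced, $S$ would be switching-equivalent to $A(\Gamma(K))$. Any fan of a degree-$D$ ridge spans a clique $K_D$, and $|K_k|>D$ together with connectivity means $\Gamma(K)$ properly contains this clique as a connected supergraph. Strict Perron--Frobenius monotonicity would then give $\lambda_1(S)>D-1$, contradicting Proposition~\ref{prop:reform}.
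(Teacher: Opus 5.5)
Your proposal is correct and follows the paper's own proof: assume $\Sigma(K)$ is balanced, apply Theorem~\ref{thm:balanced} to get a common ridge, and conclude $|K_k|\le D$, contradicting $|K_k|>D$. Your alternative route via strict Perron--Frobenius monotonicity is also valid; it is simply the argument inside the proof of Theorem~\ref{thm:balanced}, written out in place.
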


\begin{proof}
If every cycle were positive the signed graph would be balanced, and
Theorem~\ref{thm:balanced} would force all facets to share one ridge, giving exactly $D$
facets.
\end{proof}

Thus nontrivial extremal complexes are irreducibly signed: negative cycles, of which the negative
triangles supplied by Lemma~\ref{lem:triangle} are the simplest instances, lower the largest
eigenvalue of the signed adjacency matrix below the spectral radius of the underlying unsigned
graph.

\section{The second largest up-Laplacian eigenvalue}\label{sec:second}

We now prove the lower bound. All objects that are specific to the proof, including the two
selected facet stars, are introduced below at the point where they are needed.

\begin{theorem}\label{thm:main}
Let $K$ be a finite $k$-dimensional simplicial complex, where $k\ge1$, with at least two
$k$-faces, and let $\Lup_{k-1}(K)$ be its unnormalized $(k-1)$-dimensional up-Laplacian. Then
\[
  \lambda_2\bigl(\Lup_{k-1}(K)\bigr)\ \ge\ d_2(K)+k-1 .
\]
\end{theorem}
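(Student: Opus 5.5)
The plan is to pass from $\Lup_{k-1}(K)$ to the down Gram matrix $M=M(K)=B_k^{\T}B_k$, bound $\lambda_2(M)$ from below, and transfer the bound back with Lemma~\ref{lem:gram}\ref{it:gram-b}. The target $d_2(K)+k-1\ge k>0$, so a bound $\lambda_2(M)\ge d_2(K)+k-1$ transfers directly.

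\textbf{Setup.} Choose ridges $\alpha\ne\beta$ with $d(\alpha)=d_1(K)$ and $d(\beta)=d_2(K)=:p$. Let $\Aset$ be the star of $\alpha$ and $\Bset$ the star of $\beta$. Two distinct ridges lie in at most one common facet, namely $\alpha\cup\beta$ when it is a $k$-face. Hence $|\Aset\cap\Bset|\le1$, and the simultaneous version of Lemma~\ref{lem:switch} lets me reorient so that $(EME)[\Aset]=kI+J$ and $(EME)[\Bset]=kI+J$. By Lemma~\ref{lem:compress}\ref{it:comp-b} it suffices to bound $\lambda_2$ of the principal submatrix $N$ on a suitable index set $\Aset'\cup\Bset'$ with $\Aset'\subseteq\Aset$, $\Bset'\subseteq\Bset$ and $|\Aset'|,|\Bset'|\ge p$. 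If the stars share a facet $F_0$, I remove $F_0$ from $\Aset'$, which is possible because $d_1(K)\ge p$. If $d_1=p$, I instead keep $F_0$ and treat it as a single overlapping coordinate.

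\textbf{Block decomposition.} Write $N=N_0+C$, where $N_0=(kI+J)\oplus(kI+J)$ is the block-diagonal part and $C$ is the off-diagonal block between $\Aset'$ and $\Bset'$. The top two eigenvalues of $N_0$ are both at least $k+p$. Weyl's inequality then gives $\lambda_2(N)\ge k+p-\|C\|$, so the whole proof reduces to showing $\|C\|\le1$, or finding a substitute when that fails. Write $q=|\alpha\setminus\beta|$ as in Lemma~\ref{lem:adj}.
\begin{itemize}
\item[$\cdot$] If $q\ge3$, Lemma~\ref{lem:adj}\ref{it:adj-a} gives $C=0$.
\item[$\cdot$] If $q=1$, write $\alpha=\rho\cup\{a\}$ and $\beta=\rho\cup\{b\}$. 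Away from the special facet $\rho\cup\{a,b\}$, Lemma~\ref{lem:adj}\ref{it:adj-b} shows that $\alpha\cup\{u\}$ is adjacent only to $\beta\cup\{u\}$. So $C$ is a signed partial matching and $\|C\|\le1$, which gives the bound.
\end{itemize}

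\textbf{Main obstacle: the special facet and the case $q=2$.} The expected difficulty is these two configurations.
\begin{itemize}
\item[$\cdot$] When $q=1$ and the special facet $\rho\cup\{a,b\}$ lies in the chosen index set, it is adjacent to every member of the other star. I handle it either by deleting it from one side as above, or, if it must be kept, by a direct $2\times2$ compression. That compression uses the test vectors $\ones_{\Aset'}$ and $\ones_{\Bset'}$ together with the sign identity of Lemma~\ref{lem:sign} on the triangles $\{\alpha\cup\{u\},\beta\cup\{u\},\rho\cup\{a,b\}\}$, which fixes the signs of the matching edges relative to the shared facet.
\item[$\cdot$] When $q=2$, Lemma~\ref{lem:adj}\ref{it:adj-c} shows that $C$ is supported on a $2\times2$ block $\{F_1,F_2\}\times\{G_1,G_2\}$, all four facets lying in one abstract $(k+1)$-simplex. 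Lemma~\ref{lem:sign} then forces the sign pattern around the $4$-cycle. Concretely, after the switching, the products along triangles with the within-star edge $+1$ give $\eps(F_1,G_j)=-\eps(F_2,G_j)$ and similarly in $i$, so $C$ is $\pm\begin{pmatrix}1&-1\\-1&1\end{pmatrix}$ up to switching.
\end{itemize}

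For $q=2$, the plan is to note that this $C$ annihilates the all-ones vectors on each star. The $2$-dimensional subspace $U=\spn\{\ones_{\Aset'},\ones_{\Bset'}\}$ then has cross terms $\ones_{\Aset'}^{\T}C\,\ones_{\Bset'}=0$, so the Rayleigh quotient on $U$ is at least $k+p$. Lemma~\ref{lem:compress}\ref{it:comp-a} then finishes this case. Checking this sign computation carefully, and the analogous one for the shared-facet case, is where I expect the real work to lie.
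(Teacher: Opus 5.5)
Your treatment of disjoint stars works: Weyl with $\|C\|\le1$ for $q=1$ and $q\ge3$, and the compression for $q=2$. The paper instead uses the compression throughout, and only needs $|\ones^{\T}C\ones|\le d$. (For $q=2$ with only one $F_i$ and one $G_j$ present the cross term is $\pm1$ rather than $0$, but then $\|C\|\le1$ anyway.)

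\textbf{The gap is the shared-facet case when $d_1>p$.} Since $|\Bset|=d(\beta)=p$, the requirement $|\Bset'|\ge p$ forces $\Bset'=\Bset$. So when $F_0=\alpha\cup\beta$ is a facet it is \emph{always} in your index set, and removing it from $\Aset'$ does not remove it.
\begin{itemize}
\item As a member of $\Bset'$, $F_0$ contains $\alpha$, so it is adjacent to every facet of $\Aset'$, with switched sign $+1$.
\item The cross block therefore has a full column of ones at $F_0$. Hence $\|C\|\ge\sqrt{|\Aset'|}\ge\sqrt p>1$ for $p\ge2$, and the Weyl bound $k+p-\|C\|$ is too weak.
\item Deleting $F_0$ outright leaves $|\Bset'|=p-1$, so the block-diagonal part gives only $k+p-1$ with no room for the cross term.
\end{itemize}
This is exactly the trap the paper avoids by forcing the common face into the retained $\sigma$-star (Claim~\ref{cl:pivot}). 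Consequently every shared-facet configuration needs the $2\times2$ compression. That is the computation you postpone, and it is the substantive Case~2 of the paper.

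\textbf{What the compression requires.} Write the principal submatrix as $kI+N$ with $N$ of unit diagonal.
\begin{itemize}
\item The triangle $F_0,F_u,G_u$ lies in the abstract simplex $\rho\cup\{a,b,u\}$. Both star edges are switched to $+1$, so Lemma~\ref{lem:sign} forces every matched cross-entry to be $-1$.
\item Take $x=\ones_{\Sset}$ and $y=\ones_{\Tset}$, using $p$ facets of each star, overlapping only in $F_0$, with $r$ matched pairs. Then $x^{\T}x=y^{\T}y=p$, $x^{\T}y=1$, $x^{\T}Nx=y^{\T}Ny=p^2$ and $x^{\T}Ny=2p-1-r$.
\item The vectors $x+y$ and $x-y$ are orthogonal and also $N$-orthogonal. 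Their Rayleigh quotients are $p-1+(2p-r)/(p+1)$ and $p-1+r/(p-1)$, both at least $p-1$.
\item When $d_1>p$ you may instead take $p$ facets of $\Aset\setminus\{F_0\}$. The compression is then $\begin{pmatrix}p&(p-r)/p\\(p-r)/p&p\end{pmatrix}$, with smaller eigenvalue at least $p-1$.
\end{itemize}
The sign genuinely matters. With $+1$ on the matching, the quotient of $x-y$ would be $p-1-r/(p-1)$, and the bound would fail.

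\textbf{The case $p=d_2(K)=1$ needs its own argument.} If $d_1=1$ and your $\alpha,\beta$ lie in one facet, your overlapping construction leaves the single index $F_0$ with $x=y$, so the compression degenerates. The paper instead takes any two facets; their $2\times2$ block has smaller eigenvalue $k+1-|\eps|\ge k$.
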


\begin{proof}
Choose distinct $(k-1)$-faces $\sigma$ and $\tau$ such that
\[
  d(\sigma)=d_1(K),\qquad d(\tau)=d_2(K),
\]
and put $d:=d_2(K)$. Such a choice is possible because $K_{k-1}$ has at least two elements, so
that $d_1(K)$ and $d_2(K)$ are the two largest members of a list of at least two upper degrees.
We first assume that $d\ge2$. The case $d=1$ will be treated at the end. 

\medskip
\noindent

Retain \emph{all} $d$ facets containing $\tau$, and retain exactly $d$ facets containing
$\sigma$; the latter is possible because $d(\sigma)=d_1(K)\ge d_2(K)=d$. If $\sigma$ and $\tau$
lie in a common $k$-face, include that common face among the retained facets containing
$\sigma$. Let $H$ be the subcomplex consisting of the entire $(k-1)$-skeleton of $K$, together
with the retained $k$-faces. Define
\[
  \Sset=\{F\in H_k:\ \sigma\subset F\},\qquad \Tset=\{F\in H_k:\ \tau\subset F\} .
\]
Thus $\Sset$ and $\Tset$ are the two selected facet stars, and
\[
  |\Sset|=|\Tset|=d .
\]
Because two distinct $(k-1)$-faces can lie together in at most one $k$-face, a facet lying in
both stars would contain $\sigma\cup\tau$ and hence be uniquely determined; therefore
\[
  |\Sset\cap\Tset|\le1 .
\]
After the reduction, the two possible configurations are exactly
\[
  \Sset\cap\Tset=\varnothing \qquad\text{or}\qquad \Sset\cap\Tset=\{F_0\},
\]
where $F_0$ is the common $k$-face in $\Sset$ and $\Tset$. Since $H$ has the same $(k-1)$-faces
as $K$, Lemma~\ref{lem:deletion} applies and gives
\[
  \lambda_2\bigl(\Lup_{k-1}(K)\bigr)\ \ge\ \lambda_2\bigl(\Lup_{k-1}(H)\bigr) .
\]
It is therefore enough to establish the required estimate for $H$.

\medskip
\noindent

Consider the down Gram matrix
\[
  M={B_k(H)}^{\T}B_k(H) .
\]
 $M$ is
indexed by the $2d$ or $2d-1$ retained facets, and its diagonal is constant. Indeed every
diagonal entry of $M$ is $k+1$, so we may write
\[
  M=kI+N ,
\]
where $N$ has unit diagonal and $N_{F,G}=\eps(F,G)$ for adjacent $F\ne G$, and $N_{F,G}=0$
otherwise. Every eigenvalue of $M$ exceeds the corresponding eigenvalue of $N$ by exactly $k$,
so it suffices to bound $\lambda_2(N)$ from below by $d-1$.

Since $|\Sset\cap\Tset|\le1$, Lemma~\ref{lem:switch} applies to the pair of stars
$\Sset,\Tset$, associated with the $(k-1)$-faces $\sigma,\tau$. We may therefore replace $M$ by
$EME$ and assume from now on that
\[
  N[\Sset]=J_d,\qquad N[\Tset]=J_d .
\]
The switching does not change the spectrum, and by Lemma~\ref{lem:switch} the switched matrix
is again a down-Laplacian matrix of $H$; consequently the incidence
products $\eps$ computed in the new orientations still satisfy Lemma~\ref{lem:sign}, which we
shall use repeatedly. We now treat the two possible intersections of the selected stars
separately.

\medskip
\noindent\textbf{Case 1. $\Sset\cap\Tset=\varnothing$.}
Order the retained facets with those in $\Sset$ first and those in $\Tset$ second. Then
\[
  N=
  \begin{pmatrix}
    J_d & C\\
    C^{\T} & J_d
  \end{pmatrix},
  \qquad
  C=N[\Sset,\Tset] .
\]
Define
\[
  x=\begin{pmatrix}\ones_d\\\mathbf{0}_d\end{pmatrix},
  \qquad
  y=\begin{pmatrix}\mathbf{0}_d\\\ones_d\end{pmatrix},
  \qquad
  s=\ones^{\T}C\ones
   =\sum_{F\in\Sset}\sum_{G\in\Tset}N_{F,G} .
\]
Thus $x$ and $y$ are the $0$--$1$ vectors indicating membership in $\Sset$ and $\Tset$,
respectively. Since the stars are disjoint, every pair $(F,G)\in\Sset\times\Tset$ has
$F\ne G$, so that $C_{F,G}=\eps(F,G)$ when $F$ and $G$ are adjacent and $C_{F,G}=0$ otherwise.

We first exclude one exceptional case.

\begin{claim}\label{cl:pivot}
If $\Sset\cap\Tset=\varnothing$, then $\sigma\cup\tau$ is not a $k$-face of $K$.
\end{claim}

\begin{claimproof}
If $|\sigma\cup\tau|\ne k+1$ there is nothing to prove, since a $k$-face has exactly $k+1$
vertices. So suppose $|\sigma\cup\tau|=k+1$ and that $F:=\sigma\cup\tau$ is a $k$-face of
$K$. Then $\tau\subset F$, and the definition of $H$ retains every $k$-face containing $\tau$;
hence $F\in\Tset$. Also $\sigma\subset F$, so $F$ is a common $k$-face of $\sigma$ and $\tau$.
By the definition of $H$, this common face is also retained among the facets containing
$\sigma$; hence $F\in\Sset$. This contradicts
$\Sset\cap\Tset=\varnothing$.
\end{claimproof}

\begin{claim}\label{cl:s}
In the disjoint-star case, $|s|\le d$.
\end{claim}

\begin{claimproof}
Put
\[
  q=|\sigma\setminus\tau|=|\tau\setminus\sigma| .
\]
Since $\sigma\ne\tau$ and the two faces have the same dimension, $q\ge1$. We divide the
argument according to the value of $q$; the three subcases $q\ge3$, $q=1$ and $q=2$ are
exhaustive and mutually exclusive.

\smallskip
\noindent\emph{Subcase 1.1: $q\ge3$.} Every facet in $\Sset$ is obtained from $\sigma$ by
adding one vertex, and every facet in $\Tset$ is obtained from $\tau$ by adding one vertex.
Since $\sigma$ and $\tau$ differ in at least three vertices, Lemma~\ref{lem:adj}\ref{it:adj-a}
shows that no such pair can share a ridge. Hence $C$ is the zero matrix, and therefore $s=0$.

\smallskip
\noindent\emph{Subcase 1.2: $q=1$.} Write
\[
  \sigma=\rho\cup\{a\},\qquad \tau=\rho\cup\{b\},\qquad |\rho|=k-1 .
\]
Every facet in $\Sset$ has the form $F_u=\rho\cup\{a,u\}$ with $u\notin\sigma$, and every facet
in $\Tset$ has the form $G_v=\rho\cup\{b,v\}$ with $v\notin\tau$. We first check that the
degenerate values $u=b$ and $v=a$ do not occur. If $u=b$ then $F_u=\rho\cup\{a,b\}=\sigma\cup\tau$
would be a $k$-face of $K$, contradicting Claim~\ref{cl:pivot}; the same argument excludes
$v=a$.

With $u\ne b$ and $v\ne a$ available, Lemma~\ref{lem:adj}\ref{it:adj-b} shows that $F_u$ and
$G_v$ share a ridge precisely when $u=v$. Thus every row and every column of $C$ contains at
most one nonzero entry. Consequently $C$ has at most $d$ nonzero entries, each equal to $1$ or
$-1$, and hence
\[
  |s|\le d .
\]

\smallskip
\noindent\emph{Subcase 1.3: $q=2$.} Write
\[
  \sigma=\rho\cup\{a_1,a_2\},\qquad \tau=\rho\cup\{b_1,b_2\},
\]
where the four displayed vertices are distinct. By Lemma~\ref{lem:adj}\ref{it:adj-c} the only
facets that can participate in a cross-interaction are
\[
  F_i=\sigma\cup\{b_i\},\qquad G_j=\tau\cup\{a_j\},\qquad i,j\in\{1,2\},
\]
These four are $(k+1)$-element subsets of the same $(k+2)$-element vertex set
$W=\sigma\cup\tau$, which need not be a face of $K$, and for every $i,j$,
\[
  F_i\cap G_j=\rho\cup\{b_i,a_j\},
\]
which is a ridge. Thus every retained $F_i$ interacts with every retained $G_j$.

Assume that both $G_1$ and $G_2$ are retained. For each retained $F_i$, the three facets
$F_i,G_1,G_2$ are distinct $(k+1)$-element subsets of the same vertex set $W$. By the chosen
diagonal switching, the $\Tset$-block of $N$ is positive. Since $G_1,G_2\in\Tset$, we have
\[
  \eps(G_1,G_2)=N_{G_1,G_2}=1 .
\]
Lemma~\ref{lem:sign} applied to the triple $F_i,G_1,G_2$ gives
$\eps(F_i,G_1)\eps(G_1,G_2)\eps(G_2,F_i)=-1$, whence
\[
  \eps(F_i,G_1)=-\eps(F_i,G_2) .
\]
Thus, for every retained $F_i$, the two entries corresponding to $G_1$ and $G_2$ have opposite
signs and hence sum to zero. Similarly, if both $F_1$ and $F_2$ are retained, the two entries
in each retained column sum to zero, because $F_1,F_2\in\Sset$ gives
$\eps(F_1,F_2)=1$ and Lemma~\ref{lem:sign} applied to $F_1,F_2,G_j$ gives
$\eps(F_1,G_j)=-\eps(F_2,G_j)$.

It remains to combine these observations. If neither $F_1$ nor $F_2$ is retained, or neither
$G_1$ nor $G_2$ is retained, then the cross block is zero and $s=0$. If both $G_1,G_2$ are
retained, its rows cancel; if both $F_1,F_2$ are retained, its columns cancel. Otherwise, at
most one $F_i$ and at most one $G_j$ are retained, so there is at most one nonzero cross-entry
and $|s|\le1$. In every configuration,
\[
  |s|\le1\le d .
\]
This completes all three subcases and proves the claim.
\end{claimproof}

We now use Claim~\ref{cl:s} to obtain the required eigenvalue estimate. From the definitions
of $x$ and $y$,
\[
  x^{\T}x=y^{\T}y=d,\qquad x^{\T}y=0 .
\]
The block form of $N$ gives
\[
  x^{\T}Nx=y^{\T}Ny=d^2,
  \qquad
  x^{\T}Ny=s .
\]
Set
\[
  Q=\begin{pmatrix}x/\sqrt d & y/\sqrt d\end{pmatrix} .
\]
The columns of $Q$ are orthonormal, so $Q^{\T}Q=I_2$. The preceding identities give
\[
  Q^{\T}NQ
  =\begin{pmatrix}
      d & s/d\\[2pt]
      s/d & d
    \end{pmatrix} .
\]
The smaller eigenvalue of this $2\times2$ matrix is
\[
  d-\frac{|s|}{d}\ \ge\ d-1
\]
by Claim~\ref{cl:s}. Applying Lemma~\ref{lem:compress} gives
\[
  \lambda_2(N)\ \ge\ d-1 ,
\]
and hence, since $M=kI+N$,
\[
  \lambda_2(M)\ \ge\ k+d-1 .
\]

\medskip
\noindent\textbf{Case 2. $\Sset\cap\Tset=\{F_0\}$.}
Because $F_0$ contains both $\sigma$ and $\tau$, we have $|\sigma\cup\tau|\le k+1$, so these two
$(k-1)$-faces differ in exactly one vertex, that is $q=1$. Write
\[
  \rho=\sigma\cap\tau,\qquad \sigma=\rho\cup\{a\},\qquad \tau=\rho\cup\{b\} .
\]
Then $\sigma\cup\tau$ has $k+1$ vertices and is contained in $F_0$, so
\[
  F_0=\rho\cup\{a,b\} .
\]
Every other facet in $\Sset$ has the form $F_u=\rho\cup\{a,u\}$ with $u\notin\sigma$ and, since
$F_u\ne F_0$, with $u\ne b$; every other facet in $\Tset$ has the form $G_v=\rho\cup\{b,v\}$
with $v\notin\tau$ and $v\ne a$. The next claim describes all entries of $N$ between
$\Sset\setminus\{F_0\}$ and $\Tset\setminus\{F_0\}$ and gives the resulting block form of $N$.

\begin{claim}\label{cl:structure}
The cross-interactions between $\Sset\setminus\{F_0\}$ and $\Tset\setminus\{F_0\}$ form a
partial matching, and every matched cross-entry is $-1$. Consequently, with the ordering
\[
  F_0,\quad \Sset\setminus\{F_0\},\quad \Tset\setminus\{F_0\},
\]
the matrix $N$ has the form
\[
  N=\begin{pmatrix}
    1 & \ones^{\T} & \ones^{\T}\\
    \ones & J_{d-1} & -P\\
    \ones & -P^{\T} & J_{d-1}
  \end{pmatrix},
\]
where $P$ is a $0,1$ partial permutation matrix.
\end{claim}

\begin{claimproof}
Since $u\ne b$ and $v\ne a$, Lemma~\ref{lem:adj}\ref{it:adj-b} applies and shows that $F_u$ and
$G_v$ share a ridge if and only if $u=v$. Concretely, when $u=v$ the intersection
$F_u\cap G_v=\rho\cup\{u\}$ has $k$ vertices, whereas for $u\ne v$ their intersection is $\rho$,
which has only $k-1$ vertices. Hence the cross-interactions form a partial matching, and the
corresponding block has at most one nonzero entry in each row and each column, every such entry
being either $1$ or $-1$. It remains to determine their signs.

Consider any matched pair $F_u,G_u$. Since $u\notin\rho\cup\{a,b\}$, the set
\[
  W_u=\rho\cup\{a,b,u\}
\]
has $k+2$ vertices, and
\[
  F_0=W_u\setminus\{u\},\qquad
  F_u=W_u\setminus\{b\},\qquad
  G_u=W_u\setminus\{a\} .
\]
Thus $F_0,F_u,G_u$ are three distinct $(k+1)$-element subsets of $W_u$, so
Lemma~\ref{lem:sign} applies to them. Moreover, the switched star blocks are positive. Since
$F_0,F_u\in\Sset$ and $F_0,G_u\in\Tset$, we have
\[
  \eps(F_0,F_u)=N_{F_0,F_u}=1,\qquad \eps(G_u,F_0)=N_{G_u,F_0}=1 .
\]
Lemma~\ref{lem:sign} therefore gives
\[
  \eps(F_0,F_u)\,\eps(F_u,G_u)\,\eps(G_u,F_0)=-1 .
\]
Substituting the two known signs yields $\eps(F_u,G_u)=-1$. Since the matched pair was
arbitrary, every nonzero cross-entry is $-1$. Hence the cross block is $-P$, where $P$ is a
$0$--$1$ partial permutation matrix of order $d-1$.

For the displayed shape of $N$ it remains to identify the entries in the row and column of
$F_0$. If $F\in\Sset\setminus\{F_0\}$ then $F$ and $F_0$ both lie in $\Sset$, so
$N_{F_0,F}=1$; if $G\in\Tset\setminus\{F_0\}$ then $G$ and $F_0$ both lie in $\Tset$, so
$N_{F_0,G}=1$. The diagonal entry is $N_{F_0,F_0}=1$, and the two diagonal blocks are $J_{d-1}$
because $\Sset$ and $\Tset$ have positive blocks. This proves both assertions of the claim.
\end{claimproof}

With the ordering in Claim~\ref{cl:structure}, define
\[
  x=\begin{pmatrix}1\\\ones_{d-1}\\\mathbf{0}_{d-1}\end{pmatrix},
  \qquad
  y=\begin{pmatrix}1\\\mathbf{0}_{d-1}\\\ones_{d-1}\end{pmatrix} .
\]
Thus the coordinate of $x$ indexed by a facet is $1$ exactly when that facet lies in $\Sset$;
the analogous statement holds for $y$ and $\Tset$.

Let $r$ be the number of nonzero entries of $P$. Since $P$ is a partial permutation matrix of
order $d-1$,
\[
  0\le r\le d-1 .
\]
 Note that $x$ and $y$ are
linearly independent: they are $0$--$1$ vectors with $d\ge2$ ones each, and they agree in only
one coordinate.

\begin{claim}\label{cl:rayleigh}
Every nonzero vector in $\spn\{x,y\}$ has Rayleigh quotient with respect to $N$ at least $d-1$.
\end{claim}

\begin{claimproof}
Since the two stars meet only in $F_0$,
\[
  x^{\T}x=y^{\T}y=d,\qquad x^{\T}y=1 .
\]
Since $N[\Sset]=N[\Tset]=J_d$,
\[
  x^{\T}Nx=y^{\T}Ny=d^2 .
\]
Since $P$ is a $0$--$1$ matrix with $r$ nonzero entries,
$\ones_{d-1}^{\T}P\ones_{d-1}=r$. Direct multiplication gives
\[
  x^{\T}Ny=1+(d-1)+(d-1)-r=2d-1-r .
\]
Put
\[
  p=x+y,\qquad w=x-y .
\]
These vectors are orthogonal, because
\[
  p^{\T}w=x^{\T}x-y^{\T}y=0 ,
\]
and they are nonzero, since $p^{\T}p=2(d+1)>0$ and $w^{\T}w=2(d-1)>0$, the latter using
$d\ge2$. They are also orthogonal with respect to the quadratic form of $N$, since
\[
  p^{\T}Nw=x^{\T}Nx-x^{\T}Ny+y^{\T}Nx-y^{\T}Ny=d^2-(2d-1-r)+(2d-1-r)-d^2=0 .
\]
Since $\{p,w\}$ is a basis of $\spn\{x,y\}$ and both mixed terms vanish, the Rayleigh quotient
of any nonzero vector $z=\alpha p+\beta w$ is a weighted average of the Rayleigh quotients of
$p$ and $w$. It therefore suffices to show that both of these quotients are at least $d-1$.

For $w=x-y$,
\[
  w^{\T}w=2(d-1),\qquad
  w^{\T}Nw=2\bigl(d^2-(2d-1-r)\bigr)=2\bigl({(d-1)}^{2}+r\bigr) .
\]
Therefore
\[
  \frac{w^{\T}Nw}{w^{\T}w}=\frac{{(d-1)}^{2}+r}{d-1}=d-1+\frac{r}{d-1}\ \ge\ d-1 ,
\]
since $r\ge0$ and $d-1>0$.

For $p=x+y$,
\[
  p^{\T}p=2(d+1),\qquad p^{\T}Np=2\bigl(d^2+2d-1-r\bigr) .
\]
Consequently
\[
  \frac{p^{\T}Np}{p^{\T}p}
  =\frac{d^2+2d-1-r}{d+1}
  =d-1+\frac{2d-r}{d+1} >\ d-1 ,
\]
because $r\le d-1<2d$. Both orthogonal directions therefore have Rayleigh quotient at least
$d-1$, and the mixed quadratic term vanishes. The claim follows.
\end{claimproof}

By Claim~\ref{cl:rayleigh} and Lemma~\ref{lem:compress}\ref{it:comp-a}, applied to the
two-dimensional subspace $\spn\{x,y\}$,
\[
  \lambda_2(N)\ \ge\ d-1 .
\]
Hence, also in the one-common-facet case,
\[
  \lambda_2(M)\ \ge\ k+d-1 .
\]

\medskip
\noindent

For $d\ge2$, the preceding two cases give $\lambda_2(M)\ge k+d-1>0$.
Lemma~\ref{lem:gram}\ref{it:gram-b} and deletion monotonicity therefore give
\[
  \lambda_2\bigl(\Lup_{k-1}(K)\bigr)
  \ \ge\ \lambda_2\bigl(\Lup_{k-1}(H)\bigr)
  \ =\ \lambda_2(M)
  \ \ge\ k+d-1
  \ =\ d_2(K)+k-1 .
\]

\medskip
It remains to prove the inequality when $d=1$. Since $K$ has at least two $k$-faces, choose
distinct $F_1,F_2\in K_k$. The
principal submatrix of $M(K)$ indexed by these faces has the form
\[
  \begin{pmatrix}k+1&\eps\\ \eps&k+1\end{pmatrix},
  \qquad \eps\in\{0,1,-1\}.
\]
Its smaller eigenvalue is $k+1-|\eps|\ge k>0$. Hence interlacing and
Lemma~\ref{lem:gram}\ref{it:gram-b} give
\[
  \lambda_2\bigl(\Lup_{k-1}(K)\bigr)
  =\lambda_2\bigl(M(K)\bigr)\ge k=d+k-1 .
\]
The proof is complete.
\end{proof}

\begin{remark}
For $k=1$, the complex $K$ is a graph and $\Lup_0(K)$ is its ordinary Laplacian. Thus
Theorem~\ref{thm:main} becomes
\[
  \lambda_2(L(G))\ \ge\ d_2(G) .
\]
Hence we recover the corresponding inequality for graphs.
\end{remark}

\begin{remark}
The assumption that $K$ contains at least two $k$-faces is necessary. If $K$ consists of a
single $k$-simplex, then $B_k$ has one column, so $\Lup_{k-1}$ has rank one and
$\lambda_2(\Lup_{k-1})=0$. On the other hand, every ridge has upper degree one, and hence
$d_2(K)+k-1=k$.
\end{remark}

\begin{remark}
The bound is attained. For $d_2(K)=1$, let $K$ consist of two $k$-simplices glued along a common
ridge; the two columns of $B_k$ have squared norm $k+1$ and incidence product $\pm1$, so the
nonzero eigenvalues of $\Lup_{k-1}$ are $k+2$ and $k$, while $d_2(K)=1$, whence
$\lambda_2=k=d_2(K)+k-1$. For any $d\ge2$, fix $\rho$ with $|\rho|=k-1$ and distinct vertices
$a,b,u_1,\dots,u_d$ outside $\rho$, and let $K$ be generated by the $2d$ facets
$\rho\cup\{a,u_i\}$ and $\rho\cup\{b,u_i\}$. Then $d_1(K)=d_2(K)=d$, the two stars of
$\sigma=\rho\cup\{a\}$ and $\tau=\rho\cup\{b\}$ are disjoint, and $\sigma\cup\tau$ is not a
facet, so Case~1 with $q=1$ applies and the matching is complete. Put
$F_i=\rho\cup\{a,u_i\}$, $G_i=\rho\cup\{b,u_i\}$, and
$P=\rho\cup\{a,b\}$. After switching the two stars so that their diagonal blocks are $J_d$,
let $e_{F_i},e_{G_i}\in\{\pm1\}$ be the switching multipliers. Then
$e_{F_i}[\sigma:F_i]=c_{\Sset}$ and $e_{G_i}[\tau:G_i]=c_{\Tset}$ are constant within their
respective stars. Applying Lemmas~\ref{lem:triangle} and~\ref{lem:sign} to the abstract
$(k+1)$-simplex $P\cup\{u_i\}$ gives
\[
  \eps(F_i,G_i)
  =-[\sigma:P][\sigma:F_i][\tau:P][\tau:G_i].
\]
Hence every matched cross-entry has the same switched sign
\[
  e_{F_i}e_{G_i}\eps(F_i,G_i)
  =-[\sigma:P][\tau:P]c_{\Sset}c_{\Tset}=:\eps_0.
\]
Thus the cross-block is $\eps_0I_d$. If $\eps_0=1$, switching every coordinate in the
$\Tset$-star preserves both diagonal blocks and changes the cross-block sign. We may therefore
assume that
\[
  N=\begin{pmatrix} J_d & -I_d\\ -I_d & J_d\end{pmatrix},
\]
whose eigenvalues are $d+1$, $d-1$, and $\pm1$ with multiplicity $d-1$ each. Hence
$\lambda_2(N)=d-1$ and $\lambda_2(\Lup_{k-1}(K))=k+d-1=d_2(K)+k-1$.
\end{remark}

\begin{remark}
A natural extension of Theorem~\ref{thm:main} to higher eigenvalue indices would be
\[
  \lambda_m\bigl(\Lup_{k-1}(K)\bigr)\ \ge\ d_m(K)-m+k+1 .
\]
This fails already for $m=3$. Indeed, let $K$ be the cone over a $3$-cycle: its vertices are
$v,1,2,3$, and its facets are
\[
  \{v,1,2\},\qquad \{v,2,3\},\qquad \{v,3,1\}.
\]
The three edges incident with $v$ have upper degree $2$, while the remaining three edges have
upper degree $1$; hence $d_3(K)=2$. For suitable orientations, the down Gram matrix is
\[
  M(K)=
  \begin{pmatrix}
    3&-1&-1\\
    -1&3&-1\\
    -1&-1&3
  \end{pmatrix},
\]
whose eigenvalues are $4,4,1$. Therefore the three nonzero eigenvalues of $\Lup_1(K)$ are also
$4,4,1$. Taking $k=2$ and $m=3$ in the proposed inequality would give
\[
  1=\lambda_3\bigl(\Lup_1(K)\bigr)\ \ge\ d_3(K)-3+2+1=2,
\]
a contradiction. Thus this degree-based extension does not hold in general for higher
eigenvalue indices.
\end{remark}


\begin{thebibliography}{99}

\bibitem{BrouwerHaemers}
Brouwer, A.E., Haemers, W.H.: A lower bound for the Laplacian eigenvalues of a graph---proof
of a conjecture by Guo. Linear Algebra Appl. \textbf{429}, 2131--2135 (2008).
\href{https://doi.org/10.1016/j.laa.2008.06.008}{https://doi.org/10.1016/j.laa.2008.06.008}

\bibitem{CGSS}
Cameron, P.J., Goethals, J.M., Seidel, J.J., Shult, E.E.: Line graphs, root systems, and
elliptic geometry. J. Algebra \textbf{43}, 305--327 (1976).
\href{https://doi.org/10.1016/0021-8693(76)90162-9}{https://doi.org/10.1016/0021-8693(76)90162-9}

\bibitem{DuvalReiner}
Duval, A.M., Reiner, V.: Shifted simplicial complexes are Laplacian integral.
Trans. Amer. Math. Soc. \textbf{354}, 4313--4344 (2002).
\href{https://doi.org/10.1090/S0002-9947-02-03082-9}{https://doi.org/10.1090/S0002-9947-02-03082-9}

\bibitem{EidiMukherjee}
Eidi, M., Mukherjee, S.: Higher order bipartiteness vs bi-partitioning in simplicial complexes.
In: 41st International Symposium on Computational Geometry (SoCG 2025), Leibniz International
Proceedings in Informatics (LIPIcs), vol.~332, Article 45, pp.~45:1--45:12. Schloss
Dagstuhl--Leibniz-Zentrum f\"ur Informatik (2025).
\href{https://doi.org/10.4230/LIPIcs.SoCG.2025.45}{https://doi.org/10.4230/LIPIcs.SoCG.2025.45}

\bibitem{GroneMerris}
Grone, R., Merris, R.: The Laplacian spectrum of a graph II\@.
SIAM J. Discrete Math. \textbf{7}, 221--229 (1994).
\href{https://doi.org/10.1137/S0895480191222653}{https://doi.org/10.1137/S0895480191222653}

\bibitem{Guo}
Guo, J.-M.: On the third largest Laplacian eigenvalue of a graph.
Linear Multilinear Algebra \textbf{55}, 93--102 (2007).
\href{https://doi.org/10.1080/03081080600730996}{https://doi.org/10.1080/03081080600730996}

\bibitem{Huang}
Huang, J.: The Duval--Reiner conjecture: counterexamples and the second partial-sum inequality.
arXiv:2607.20051 (2026).
\href{https://arxiv.org/abs/2607.20051}{https://arxiv.org/abs/2607.20051}



\bibitem{HornJohnson}
Horn, R.A., Johnson, C.R.: Matrix Analysis, 2nd edn. Cambridge University Press,
Cambridge (2013).

\bibitem{LiPan}
Li, J.-S., Pan, Y.-L.: A note on the second largest eigenvalue of the Laplacian matrix of a
graph. Linear Multilinear Algebra \textbf{48}, 117--121 (2000).
\href{https://doi.org/10.1080/03081080008818663}{https://doi.org/10.1080/03081080008818663}

\bibitem{Merris}
Merris, R.: Degree maximal graphs are Laplacian integral.
Linear Algebra Appl. \textbf{199}, 381--389 (1994).
\href{https://doi.org/10.1016/0024-3795(94)90361-1}{https://doi.org/10.1016/0024-3795(94)90361-1}

\bibitem{Vijayakumar}
Vijayakumar, G.R.: Signed graphs represented by $D_\infty$.
European J. Combin. \textbf{8}, 103--112 (1987).
\href{https://doi.org/10.1016/S0195-6698(87)80024-0}{https://doi.org/10.1016/S0195-6698(87)80024-0}

\bibitem{ZhangSongFan}
Zhang, H.-Z., Song, Y.-M., Fan, Y.-Z.: Degree majorization and Laplacian eigenvalue sums for
simplicial complexes. arXiv:2607.20910 (2026).
\href{https://arxiv.org/abs/2607.20910}{https://arxiv.org/abs/2607.20910}

\bibitem{Zaslavsky}
Zaslavsky, T.: Signed graphs. Discrete Appl. Math. \textbf{4}, 47--74 (1982);
erratum, \textbf{5}, 248 (1983).

\end{thebibliography}
\end{document}